\newcommand{\vol}{{\rm vol}}
\newcommand{\ord}{{\rm ord}}
\newcommand{\fm}{\mathfrak{m}}
\newcommand{\fa}{\mathfrak{a}}
\newcommand{\cO}{\mathcal{O}}
\newcommand{\bR}{\mathbb{R}}
\newcommand{\bC}{\mathbb{C}}
\newcommand{\bZ}{\mathbb{Z}}
\newcommand{\lct}{{\rm lct}}
\newcommand{\Val}{{\rm Val}}
\newcommand{\hvol}{{\widehat{\rm vol}}}
\newcommand{\cF}{{\mathcal{F}}}
\newcommand{\cI}{{\mathcal{I}}}
\newcommand{\cJ}{{\mathcal{J}}}
\newcommand{\bQ}{{\mathbb{Q}}}
\newcommand{\cX}{{\mathcal{X}}}
\newcommand{\cV}{{\mathcal{V}}}
\newcommand{\cL}{{\mathcal{L}}}
\newcommand{\ft}{{\mathfrak{t}}}
\newcommand{\bP}{{\mathbb{P}}}
\newcommand{\cC}{{\mathcal{C}}}
\newcommand{\cS}{{\mathcal{S}}}
\newcommand{\bld}{{\bf ld}}
\newcommand{\sddb}{{\sqrt{-1}\partial\bar{\partial}}}
\newcommand{\cD}{{\mathcal{D}}}
\newcommand{\vphi}{{\varphi}}
\newcommand{\tS}{{\widetilde{S}}}
\newcommand{\cH}{{\mathcal{H}}}
\newcommand{\ocC}{{\overline{\mathcal{C}}}}
\newcommand{\orb}{{\rm orb}}
\newcommand{\wtd}{\widetilde}
\newcommand{\cE}{{\mathcal{E}}}
\newcommand{\cP}{{\mathcal{P}}}
\newcommand{\Spec}{\mathrm{Spec}}
\newcommand{\Sym}{\mathrm{Sym}}
\newcommand{\tr}{\mathrm{tr}}
\newcommand\lDing{\operatorname{log-Ding}}
\newcommand{\lCM}{\operatorname{log-CM}}
\newtheorem{thm}{Theorem}[section]
\newtheorem{lem}[thm]{Lemma}
\newtheorem{cor}[thm]{Corollary}
\newtheorem{defn}[thm]{Definition}
\newtheorem{prop}[thm]{Proposition}
\newtheorem{rem}[thm]{Remark}
\newtheorem{exmp}[thm]{Example}
\begin{document}

\title{K\"{a}hler-Einstein metrics and volume minimization}
\author{Chi Li, Yuchen Liu}

\maketitle{}

\abstract{
We prove that if a $\bQ$-Fano variety $V$ specially degenerates to a 
K\"{a}hler-Einstein $\bQ$-Fano variety $V$, then for any ample Cartier 
divisor $H=-r^{-1} K_V$ with $r\in \bQ_{>0}$, the normalized volume 
$\hvol(v)=A_{\cC}^n(v)\cdot \vol(v)$ is globally minimized at the canonical 
valuation $\ord_V$ among all real valuations which are centered at the vertex 
of the affine cone $\cC:=C(V,H)$.
This is also generalized to the logarithmic and the orbifold setting. As a consequence, we complete the confirmation of a conjecture in \cite{Li15a} on an equivalent characterization of K-semistability for any smooth Fano manifold. We also prove that the valuation associated to the Reeb vector field of a smooth Sasaki-Einstein metric minimizes $\hvol$ over the corresponding K\"{a}hler cone. These results strengthen the minimization result of Martelli-Sparks-Yau \cite{MSY08}. }

\tableofcontents

\section{Introduction}

\subsection{Motivation and background}
The study of K\"{a}hler-Einstein (KE) metrics is very active recently. In particular, the Yau-Tian-Donaldson correspondence has been established for smooth Fano manifolds (see \cite{Tia97}, \cite{Bm12}, \cite{CDS15, Tia15}). This correspondence says that a smooth Fano manifold 
admits a K\"{a}hler-Einstein metric if and only if it's K-polystable. The existence of K\"{a}hler-Einstein metrics of positive curvature is a global question. However, it can be related to a local question by considering the affine cone over the underlying Fano variety with the polarization given by a positive Cartier multiple of the anti canonical divisor. The affine cone is singular except for the case that the Fano variety is $\bP^n$ and the polarization one uses is $\cO_{\bP^n}(1)=\frac{1}{n+1}(-K_{\bP^n})$. Such cone 
singularities are basic examples of $\bQ$-Gorenstein klt singularities (see \cite{Kol13}). It's natural to ask what information does the K\"{a}hler-Einstein condition provide for the associated cone singularities. An answer is that there are K\"{a}hler Ricci-flat cone metrics on these affine cones over KE Fano manifolds. These Ricci-flat cone metrics are rotationally symmetric and are easily obtained by solving an ODE with respect to radius functions determined by the K\"{a}hler-Einstein metrics. 
In this way, the study of K\"{a}hler-Einstein metrics can be put into a broader setting of K\"{a}hler Ricci-flat cone metrics. The latter has also been studied extensively recently. Given an affine cone singularity, $(\cC, o)$ any K\"{a}hler cone metric gives rise to a radius function $r$ and its associated Reeb vector field $\partial_\theta:=J(r\partial_r)$ where $J$ is the complex structure on the regular part of the affine cone. If $\partial_\theta$ has closed orbits, then $r\partial_r$ and $\partial_\theta$ generates an effective $\bC^*$-action.
This is called the quasi-regular case which is relevant to our discussion in this paper. The Ricci-flat condition is then translated to the condition that the quotient $(\cC\setminus \{o\})/\bC^*$ is a K\"{a}hler-Einstein Fano orbifold $(V, \Delta)$, at least when $\cC$ has isolated singularity at $o$.

Whether there exists such a good radius function or an associated good $\bC^*$-action is a delicate question. 
In the case when there is an effective torus $T\cong (\bC^*)^d$ action on $\cC$,  it was proved in \cite{GMSY07, MSY08} that the $ \bC^*$-action corresponding to the quasi-regular K\"{a}hler Ricci-flat cone metric should minimize a normalized volume functional. The normalized volume functional is defined on the space of Reeb vector fields which is a conic subset of the Lie algebra of $T$. In \cite{Li15a}, this normalized volume functional was re-interpreted as the normalized volume of the valuations associated to the $\bC^*$-actions. This allows us to define the normalized volume functional, denoted by $\hvol(v)$, for any valuation $v$. More precisely for any valuation $v$ we define $\hvol(v)=A_\cC(v)^n\cdot \vol(v)$. $A_\cC(v)$ is the log discrepancy of $v$. $\vol(v)$ is the volume of $v$, which is finite if $v$ is centered at a closed point. Using this new point of view, one can consider the normalized volume as a function on the space of valuations which are centered at $o\in \cC$. 
Moreover, we can consider any $\bQ$-Gorenstein klt singularity, which is not necessarily an affine cone singularity or equipped with any $\bC^*$-action. We then ask whether there is a minimizer of this normalized volume functional, and, if there is a minimizer, what is its geometric meaning (see \cite{Li15a}). Although these are purely algebro-geometric questions, we speculated in \cite{Li15a} that answering them could help to understand  {\it metric tangent cones} of K\"{a}hler-Einstein varieties (cf. \cite{DS15}).
 
As a continuation of \cite{Li15a, Li15b}, we study these two questions for affine cones over ``K-semistable" $\bQ$-Fano  varieties. 
In this paper we will show that for a $\bQ$-Fano variety $V$, if $V$ (specially) degenerates to a K\"{a}hler-Einstein Fano variety, then the canonical valuation $\ord_V$ is actually a global minimizer of $\hvol$. As a consequence, we obtain an equivalent characterization of K-semistability for smooth Fano manifold, thus completing the confirmation of a conjecture in \cite{Li15a}. 

On the other hand, Martelli-Sparks-Yau's minimization result in \cite{MSY08} works for a general (smooth) Sasaki-Einstein manifold whose associated Reeb vector field does not have to generate a $\bC^*$-action. If the Reeb vector field generates a torus action of rank bigger than $1$, then the Sasaki-Einstein metric is called irregular. By using approximation arguments, we can indeed extend our result to prove a minimization result in the irregular case that generalizes Martelli-Sparks-Yau's result.  


\subsection{Statement of main results}
Let $(V^{n-1}, E^{n-2})$ be a log-Fano pair. By this we mean that
$-(K_V+E)$ is an ample $\bQ$-Cartier divisor and $(V, E)$ has klt 
singularities with $E$ effective. 
Assume $H=-r^{-1}(K_V+E)$ is an ample Cartier divisor for an 
$r\in \bQ_{>0}$. We will denote the affine cone by 
$\cC:=C(V, H)={\rm Spec}\bigoplus_{k=0}^{+\infty}H^0(V, kH)$.
Notice that $\cC$ has $\bQ$-Gorenstein klt singularities at the vertex
$o$ (see \cite[Lemma 3.1]{Kol13}). $(\cC, o)$ has a natural 
$\bC^*$-action. Denote by $v_0$ the canonical $\bC^*$-invariant 
divisorial valuation $\ord_V$ where $V$ is considered as the 
exceptional divisor of the blow up $Bl_o \cC\rightarrow \cC$. Denote by $\cE$ the effective divisor on $\cC$ corresponding to $E$, which is the closure of the divisor $f^{-1}E$ under the projection $f: \cC-\{o\}\rightarrow V$.
Using these notations, the following is the first main result of this paper.  

\begin{thm}\label{thmlog}
With the above notation, 
if $(V, E)$ is a conical K\"{a}hler-Einstein log-Fano pair, then $\hvol_{(\cC,\cE)}$ is globally minimized over $(\cC, o)$ at $\ord_V$.
\end{thm}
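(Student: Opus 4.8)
The plan is to establish $\hvol_{(\cC,\cE)}(v)\ge \hvol_{(\cC,\cE)}(\ord_V)$ for every real valuation $v$ centered at $o$, by first converting the metric hypothesis into algebraic K-semistability and then reducing the volume comparison to a valuative positivity statement on $V$. I would start by recording the canonical valuation: a direct computation on the blow-up $Bl_o\cC\to\cC$ gives $A_{(\cC,\cE)}(\ord_V)=r$ and $\vol(\ord_V)=(H^{n-1})$, so that $\hvol_{(\cC,\cE)}(\ord_V)=r^{n}\,(H^{n-1})$. The metric hypothesis enters only here: by the Yau-Tian-Donaldson correspondence (\cite{Bm12} and its extension to log-Fano pairs), the existence of a conical K\"{a}hler-Einstein metric on $(V,E)$ forces $(V,E)$ to be K-polystable, hence K-semistable. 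Everything afterward is algebro-geometric.

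The main device is a bridge between valuations on $\cC$ and filtrations on the section ring $R=\bigoplus_{m\ge 0}R_m$, $R_m=H^0(V,mH)$. To $v$ I attach the decreasing, multiplicative, linearly bounded filtration $\cF_v^\lambda R_m:=\{\,s\in R_m:v(s)\ge\lambda\,\}$, and the key is that both factors of $\hvol(v)$ are recovered from $\cF_v$: the volume $\vol(v)$ as an Okounkov-body volume of the filtration, and the log discrepancy $A_{(\cC,\cE)}(v)$ from the log discrepancies over $V$ of the divisorial data in its associated graded. Writing $L:=-(K_V+E)=rH$, the required positivity is furnished by the valuative criterion of Fujita and Li: K-semistability of $(V,E)$ is equivalent to $\beta(w):=A_{(V,E)}(w)\,(L^{n-1})-\int_0^\infty \vol(L-tw)\,dt\ge 0$ for all valuations $w$ over $V$. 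The desired inequality is then an integrated form of $\beta\ge 0$ for $\cF_v$, with the trivial grading filtration (that is, $\ord_V$) saturating the bound.

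I would organize the comparison by the directions in which $v$ can move away from $\ord_V$. Along the Reeb cone of the torus $T$ acting on $\cC$ --- that is, among $\bC^*$-invariant quasi-monomial valuations --- $\hvol$ is, up to normalization, the Martelli-Sparks-Yau volume functional, and because $(V,E)$ is conical K\"{a}hler-Einstein its canonical Reeb field $\xi_0=\ord_V$ is the Sasaki-Einstein one, which by \cite{MSY08} already minimizes the functional within the Reeb cone. The new content lies in the transverse directions: for an arbitrary, not necessarily $T$-invariant $v$, I would degenerate $v$ to a $T$-invariant valuation by passing to initial terms with respect to $\cF_v$, argue that this does not increase $\hvol$, and then dispatch the invariant case by combining the Reeb-cone minimization with $\beta\ge 0$.

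The principal obstacle is precisely this transverse step --- controlling $\hvol$ under the degeneration to a torus-invariant valuation and then comparing that valuation with $\ord_V$ --- since the volume factor behaves well under taking associated graded rings but the log-discrepancy factor $A_{(\cC,\cE)}(v)$ is delicate, and one must rule out that the infimum escapes to the boundary of the valuation space. I expect to resolve this via lower semicontinuity of $\hvol$ in $v$ together with an approximation of $v$ by divisorial valuations whose centers, Reeb weights, and log discrepancies can all be tracked explicitly, thereby reducing the entire comparison to the inequality $\beta\ge 0$ secured above.
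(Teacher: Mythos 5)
There is a genuine gap, and it sits exactly where the paper's key idea lives. You reduce to K-semistability of $(V,E)$ itself and to the valuative invariant $\beta(w)$ for valuations $w$ \emph{over $V$}; but the quantity to be bounded is $\hvol_{(\cC,\cE)}(v)$ for an arbitrary valuation $v$ centered at the vertex $o\in\cC$, and such a $v$ is not captured by divisorial data over $V$. The paper instead passes to the projective cone: it constructs a conical K\"ahler--Einstein metric on $(\ocC,\cE+(1-\tfrac{r}{n})V_\infty)$ (Proposition \ref{cKEproj}), concludes via Berman that \emph{this} pair is log-Ding-semistable, and applies the log version of Fujita's inequality to the filtration $\cF^xS_m=H^0(\ocC,L^m\otimes\fa_x(v))$ of the section ring of $(\ocC,V_\infty)$ induced by $v$. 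The elementary bound $\vol(\cF S^{(x)})\ge (L^n)-\vol(v)x^n$ then yields $\hvol_{(X,D)}(v)\ge(\tfrac{n}{n+1})^n(-K_X-D)^n$, and the specific cone angle $1-\tfrac{r}{n}$ at infinity is what makes this right-hand side equal exactly $r^nH^{n-1}=\hvol_{(\cC,\cE)}(\ord_V)$. Without the boundary term at infinity the estimate is not sharp; the authors flag this explicitly as the crucial point. Your outline never introduces the compactified cone or the divisor $(1-\tfrac{r}{n})V_\infty$, so the chain from ``$(V,E)$ K-semistable'' to the desired inequality on $\Val_{\cC,o}$ is not closed.

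The second gap is your transverse step. You propose to degenerate a general $v$ to a $T$-invariant valuation by passing to initial terms, assert that $\hvol$ does not increase, and then invoke lower semicontinuity of $\hvol$ plus the Martelli--Sparks--Yau minimization on the Reeb cone. Neither half of this is available: $\hvol$ is not known to be lower semicontinuous on $\Val_{X,o}$ (this is precisely the unverified ``semicontinuity hypothesis'' of \cite{Li15a}), and the log discrepancy $A_{(\cC,\cE)}$ is exactly the factor whose behavior under initial-term degenerations is uncontrolled. Moreover, \cite{MSY08} gives minimization only over the cone of normalized Reeb vector fields, which is a much smaller family than all $T$-invariant valuations centered at $o$, so even the invariant case would not be dispatched as claimed. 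The paper's proof sidesteps all of this: the log-Fujita inequality (Proposition \ref{logFujval}) applies directly and uniformly to an arbitrary real valuation centered at $o$, with no reduction to torus-invariant valuations and no degeneration argument.
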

For the definition of conical K\"{a}hler-Einstein potentials/metrics and the normalized volume $\hvol_{(\cC, \cE)}$ see Section \ref{secpre}. 
Letting $E=0$, we get a non-log version:
\begin{cor}\label{corKEcone}
If $V$ is K\"{a}hler-Einstein $\bQ$-Fano variety, then $\hvol$ is globally minimized over $(\cC, o)$ at $\ord_V$.
\end{cor}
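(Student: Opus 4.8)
Since $\cE=0$ when $E=0$, the corollary is exactly the $E=0$ case of Theorem \ref{thmlog}, so I would simply deduce it from the theorem; the substance is the log minimization statement, and I describe a plan for that. The goal is to show $\hvol_{(\cC,\cE)}(v)\ge \hvol_{(\cC,\cE)}(\ord_V)$ for every real valuation $v$ centered at the vertex $o$, since $\ord_V$ is then automatically a global minimizer. First I would record the reference value attached to the canonical valuation: a discrepancy computation on $Bl_o\cC\to\cC$ gives $A_{(\cC,\cE)}(\ord_V)=r$, while asymptotic Riemann--Roch on $V$ gives $\vol(\ord_V)=(H^{n-1})$, so that $\hvol_{(\cC,\cE)}(\ord_V)=r^n(H^{n-1})=r\cdot(-(K_V+E))^{n-1}$. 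I will also use that $\hvol$ is invariant under rescaling $v\mapsto\lambda v$, which lets me normalize freely.

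The only analytic input I would use is one implication: a conical K\"ahler--Einstein log-Fano pair $(V,E)$ is Ding-semistable, hence K-semistable. This follows from the Berman-type principle that the conical KE metric minimizes the Ding functional, so the $\lDing$-invariant of every special test configuration of $(V,E)$ is nonnegative. By the valuative criterion of Fujita and Li this is equivalent to
\[
\beta_{(V,E)}(w):=A_{(V,E)}(w)-S_{(V,E)}(w)\ \ge\ 0\quad\text{for every valuation } w \text{ over } V,
\]
where $S_{(V,E)}(w)=(-(K_V+E))^{-(n-1)}\int_0^\infty \vol\big(-(K_V+E)-t\,w\big)\,dt$. From here the problem is purely algebro-geometric.

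The geometric heart is to feed an arbitrary valuation on the cone into this criterion, in two steps. First, an equivariant reduction: using the $\bC^*$-action on $\cC$ and the induced action on the space $\Val_{\cC,o}$ of valuations, I would degenerate a general $v$ to its initial valuation $\bin(v)$, which is $\bC^*$-invariant, and show $\hvol_{(\cC,\cE)}(\bin(v))\le \hvol_{(\cC,\cE)}(v)$. Second, every $\bC^*$-invariant valuation has the form $v_{t,w}$ built from a valuation $w$ over $V$ and a weight $t>0$ in the cone direction, namely $v_{t,w}(s)=tk+w(s)$ on homogeneous $s\in R_k=H^0(V,kH)$, extended by taking minima over graded pieces; for these I would compute $A_{(\cC,\cE)}(v_{t,w})=tr+A_{(V,E)}(w)$ and, writing the Duistermaat--Heckman measure $\mu_w$ of $w$ on $V$, obtain $\vol(v_{t,w})=(H^{n-1})\int_0^\infty(\tau+t)^{-n}\,d\mu_w(\tau)$. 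By convexity of $\tau\mapsto(\tau+t)^{-n}$ this integral is at least its value at the barycenter of $\mu_w$, which equals $r^{-1}S_{(V,E)}(w)$; minimizing the resulting bound over $t>0$ and substituting $A_{(V,E)}(w)\ge S_{(V,E)}(w)$ (i.e.\ $\beta_{(V,E)}(w)\ge0$) yields $\inf_t\hvol_{(\cC,\cE)}(v_{t,w})\ge r^n(H^{n-1})=\hvol_{(\cC,\cE)}(\ord_V)$, with equality forcing $\mu_w$ to be a point mass and $v=\ord_V$. Combined with the first step this gives the theorem.

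I expect the main obstacle to be the equivariant reduction. Passing from an arbitrary real valuation to a $\bC^*$-invariant one while controlling $\hvol$ is delicate, because neither factor of $A^n\cdot\vol$ is manifestly monotone under the degeneration and real (as opposed to divisorial) valuations must be reached by approximation; making it work requires combining lower semicontinuity of the volume, semicontinuity of the log discrepancy, and an Izumi-type boundedness estimate to guarantee that $\bin(v)$ exists and does not increase the normalized volume. Once this step is secured, the remaining invariant computation and the application of the valuative criterion reduce to the one-variable optimization above.
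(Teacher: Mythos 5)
Your reduction of the corollary to Theorem \ref{thmlog} by setting $E=0$ is exactly what the paper does, and your reference computations $A_{(\cC,\cE)}(\ord_V)=r$, $\vol(\ord_V)=(H^{n-1})$ agree with the proof of Theorem \ref{thmlog}. But your plan for the theorem itself diverges from the paper in a way that opens a genuine gap. The paper never works with the semistability of the base $(V,E)$ alone: it constructs a conical K\"ahler--Einstein metric on the \emph{projective cone} $(\ocC,\cE+(1-\tfrac{r}{n})V_\infty)$ (Proposition \ref{cKEproj}), invokes Berman to get log-Ding-semistability of that pair, and then applies log-Fujita (Propositions \ref{logFujval} and \ref{proplb}) to the filtration of an \emph{arbitrary} valuation $v\in\Val_{\cC,o}$, obtaining $\hvol_{(X,D)}(v)\ge\bigl(\tfrac{n}{n+1}\bigr)^n(-K_X-D)^n=r^nH^{n-1}$ directly, with no equivariance assumption on $v$. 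The choice $\beta=r/n$ is precisely what makes this a priori crude bound sharp. Your route instead uses only $\beta_{(V,E)}(w)\ge 0$ on the base and therefore must first replace an arbitrary $v$ by a $\bC^*$-invariant valuation $v_{t,w}$.

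That first step --- degenerating $v$ to its initial valuation $\bin(v)$ with $\hvol_{(\cC,\cE)}(\bin(v))\le\hvol_{(\cC,\cE)}(v)$ --- is the gap. The ingredients you name do not assemble into a proof: ``lower semicontinuity of the volume'' points the wrong way for a limiting argument (what one actually needs is the containment $\fa_\bullet(\bin(v))\supseteq\bin(\fa_\bullet(v))$ of graded ideal sequences, giving $\vol(\bin(v))\le\vol(v)$ by a colength comparison), and the inequality $A_{(\cC,\cE)}(\bin(v))\le A_{(\cC,\cE)}(v)$ for the log discrepancy is a nontrivial statement that requires its own argument and is not supplied by Izumi-type bounds. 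Without this reduction your argument only proves minimization among $\bC^*$-invariant valuations, which is the strictly weaker statement already addressed in \cite{Li15b}; the entire point of the present paper's passage to the projective cone is to avoid needing the reduction. Your second half --- the formulas $A_{(\cC,\cE)}(v_{t,w})=tr+A_{(V,E)}(w)$ and $\vol(v_{t,w})=(H^{n-1})\int(\tau+t)^{-n}d\mu_w(\tau)$, Jensen's inequality at the barycenter $S_{(V,E)}(w)/r$, and the optimization over $t$ --- is correct and is close in spirit to the convexity argument of Section \ref{secdervol}, but it cannot substitute for the missing reduction. Either prove the degeneration inequality in full (which is a substantial piece of work in its own right), or adopt the paper's device of applying log-Fujita on the projective cone to arbitrary valuations.
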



There are several steps to prove Theorem \ref{thmlog}. We first prove
that there is a conical K\"{a}hler-Einstein metric on the projective
cone $(\ocC, \cE+(1-\beta)V_\infty)$ with $\beta=r/n$. See section \ref{secprojcone} for the relevant notations. (Notice that
$0<\beta\leq 1$ by Lemma \ref{fanoindex}.) This is a generalization
of the construction in \cite[Lemma 3]{Li13}. Next we use Berman's result to conclude that $(\ocC, \cE+(1-\beta)V_\infty)$ is log-K-polystable and hence log-K-semistable, or equivalently log-Ding-semistable. Then we apply the log version 
of Fujita's result to the filtration associated to any valuation. At this point, we can proceed in two ways to complete the proof. For the first (quick) proof we use similar arguments to those used in \cite{Fuj15, Liu16} to obtain an estimate of $\hvol_{(\cC, \cE)}$ which turns out be sharp. For the second proof, we interpret the expression in log-Fujita as the derivative of normalized volumes in the same spirit as in \cite{Li15a, Li15b} and use a convexity argument to conclude the proof (see Section \ref{secdervol}).

We can also deal with the semistable case. See section \ref{secKstability} for the definition of special degenerations.

\begin{thm}\label{thmsemi}
Assume $V$ is a $\bQ$-Fano variety. Assume either of the following two conditions is satisfied:
\begin{enumerate}
\item $V$ specially degenerates to a K\"{a}hler-Einstein $\bQ$-Fano variety;
\item $(\ocC, (1-\frac{r}{n})V_\infty)$ specially degenerates to a conical K\"{a}hler-Einstein pair.
\end{enumerate}
Then $\hvol$ is globally minimized at $\ord_V$ over $(\cC, o)$.
\end{thm}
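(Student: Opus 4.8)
The plan is to reduce Theorem~\ref{thmsemi} to the Ding-semistability input that already drives the proof of Theorem~\ref{thmlog}. Recall that in that proof the K\"ahler--Einstein hypothesis is used only to deduce, via Berman's result, that the projective cone $(\ocC,(1-\beta)V_\infty)$ with $\beta=r/n$ is log-Ding-semistable; once this semistability is in hand, applying the log version of Fujita's estimate to the filtration attached to an arbitrary real valuation $v$ centered at $o$ yields the sharp bound $\hvol(v)\ge \hvol(\ord_V)$, and hence the global minimization at $\ord_V$. Therefore it suffices to establish log-Ding-semistability of $(\ocC,(1-\beta)V_\infty)$ under each of the two hypotheses, after which the remainder of the argument is verbatim that of Theorem~\ref{thmlog}, with no appeal to polystability.

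First I would show that condition (1) implies condition (2), so that only (2) needs to be treated. A special degeneration of the $\bQ$-Fano variety $V$ to a K\"ahler--Einstein $\bQ$-Fano variety $V_0$ is $\bC^*$-equivariant and $\bQ$-Gorenstein, with $r$ and $\beta$ preserved; taking projective cones fiberwise produces a special degeneration of $(\ocC,(1-\beta)V_\infty)$ to $(\overline{C}(V_0,H_0),(1-\beta)V_{0,\infty})$, whose total space is $\bQ$-Gorenstein and whose central fiber is normal with klt singularities. By the conical K\"ahler--Einstein construction on projective cones (the generalization of \cite[Lemma 3]{Li13} carried out in Section~\ref{secprojcone}), this central fiber carries a conical K\"ahler--Einstein metric, which is exactly condition (2).

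It then remains to propagate semistability from the special fiber to the general fiber. Under condition (2) the central fiber of the special degeneration is conical K\"ahler--Einstein, hence log-K-polystable by Berman's result, and in particular log-Ding-semistable. The key point is the lower semicontinuity of the log-Ding invariant under special degeneration: if the general fiber $(\ocC,(1-\beta)V_\infty)$ admitted a test configuration $\mathcal{W}$ with $\lDing(\mathcal{W})<0$, then after base change and composition with the given degeneration one could take a $\bC^*$-limit to produce a test configuration $\mathcal{W}_0$ of the central fiber with $\lDing(\mathcal{W}_0)\le \lDing(\mathcal{W})<0$, contradicting the log-Ding-semistability just recorded. Consequently $(\ocC,(1-\beta)V_\infty)$ is log-Ding-semistable, and the Fujita estimate finishes the proof.

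The main obstacle is precisely this last propagation step, that is, the statement that log-Ding-semistability is preserved when passing from the special to the general fiber of a special degeneration. This is an openness property of log-Ding-semistability, and its technical heart is controlling how the log-Ding invariant behaves when a destabilizing test configuration of the general fiber is itself degenerated along the given family; the requisite inequality $\lDing(\mathcal{W}_0)\le \lDing(\mathcal{W})$ for the limiting configuration rests on the lower semicontinuity of the Ding functional under degeneration of test configurations, which must be set up carefully in the $\bQ$-Gorenstein setting at hand.
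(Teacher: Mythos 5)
Your reduction has two genuine problems. First, the claim that condition (1) implies condition (2) by ``taking projective cones fiberwise'' is exactly what the remark following Theorem \ref{thmsemi} warns against: the second condition does \emph{not} in general follow from the first (see the \emph{Aside} in \cite[Section 3.8]{Kol13}); at best it holds after a branched covering of the projective cone. So you cannot collapse case (1) into case (2). The paper handles case (1) by an entirely different device: it perturbs $V$ itself, choosing a general irreducible $F\in|-mK_V|$ avoiding the center of $v$ and setting $E_\kappa=(1-\kappa)F/m$; Tian's $\alpha$-invariant gives a genuine conical K\"ahler--Einstein potential on $(V,E_\kappa;-K_V)$ for $0<\kappa\ll1$, the special-degeneration hypothesis gives lower boundedness of the Ding energy via \cite[Theorem 4]{Li13}, and an interpolation argument extends existence of the cKE potential to all $\kappa\in(0,1)$. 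Then Theorem \ref{thmlog} applies to the log pair $(V,E_\kappa)$, and since $F$ misses the center of $v$ one has $A_{(X,\cE_\kappa)}(v)=A_X(v)$, giving $\hvol(v)\ge\kappa^n\hvol(v_0)$; letting $\kappa\to1$ finishes.

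Second, the step you yourself flag as ``the main obstacle'' --- propagating log-Ding-semistability from the central fiber of a special degeneration to the general fiber via lower semicontinuity of the Ding invariant under degeneration of test configurations --- is precisely the nontrivial content, and you do not supply it. The paper does not argue this way: for case (2) it invokes the analytic result \cite[Theorem 4]{Li13} that special degeneration to a cKE pair forces the log-Ding-\emph{energy} of the general fiber to be bounded below, and then reads off log-Ding-semistability from the expansion \eqref{expansion1} in the proof of Proposition \ref{logBerm}. Your proposed algebraic substitute (composing a destabilizing test configuration with the given degeneration and comparing Ding invariants in the limit) is a plausible but delicate statement that is not established here; as written it is a gap, not a proof. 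Your correct observation --- that only log-Ding-semistability of the relevant pair is needed to run Proposition \ref{proplb} --- is shared with the paper, but neither of your two routes to that semistability goes through as stated.
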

\begin{rem}
Note that the second condition does not necessarily follow from the first one (see \cite[{\it Aside} in Section 3.8]{Kol13}). However the first condition should imply the second one up to a branched covering of the projective cone.
\end{rem}
When $V$ is a smooth Fano manifold, by the deep results in \cite{CDS15, Tia15} we know that $V$ being K-semistable is equivalent to the condition that $V$ degenerates to a K\"{a}hler-Einstein Fano variety. 
This combined with the above theorem and the results in \cite{Li15b} allows us to complete the confirmation of a conjecture in \cite{Li15a}.
\begin{cor}\label{corsemi}
Assume $V$ is a smooth Fano manifold. Then $V$ is K-semistable if and only if $\hvol$ is globally minimized
at $\ord_V$ over $(\cC, o)$.
 \end{cor}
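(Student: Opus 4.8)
The plan is to prove the two implications separately, combining Theorem~\ref{thmsemi} with the resolution of the Yau--Tian--Donaldson conjecture in the smooth case and with the earlier analysis of \cite{Li15b}. Neither direction should require new hard analysis once these inputs are in place; the work is in matching up the various notions of stability and minimization involved.

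For the implication that K-semistability implies minimization, suppose $V$ is a K-semistable smooth Fano manifold. By the deep results of \cite{CDS15, Tia15}, for smooth Fano manifolds K-semistability is equivalent to the existence of a special degeneration of $V$ to a K\"{a}hler-Einstein $\bQ$-Fano variety, namely its K-polystable central fiber. This is precisely hypothesis~(1) of Theorem~\ref{thmsemi}, so I would simply apply that theorem to conclude that $\hvol$ is globally minimized at $\ord_V$ over $(\cC, o)$.

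For the converse, I would argue by contraposition using \cite{Li15b}. Assume $V$ is not K-semistable. Then there is a special test configuration $\cV$ of $V$ with strictly negative (Donaldson--)Futaki invariant. Such a degeneration determines a $\bC^*$-invariant divisorial valuation on the cone $\cC$, and the central computation of \cite{Li15b} identifies the directional derivative of $\hvol$ at $\ord_V$ along the associated ray with a positive multiple of the Futaki invariant of $\cV$. Since this invariant is negative, the derivative is negative, producing valuations $v$ near $\ord_V$ with $\hvol(v) < \hvol(\ord_V)$. This contradicts global minimality, so $V$ must be K-semistable. Equivalently, global minimization over all real valuations centered at $o$ forces minimization over the smaller class of $\bC^*$-invariant valuations studied in \cite{Li15b}, which is shown there to be equivalent to K-semistability.

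The hard part will be conceptual rather than computational: reconciling the two notions of minimizer. Theorem~\ref{thmsemi} delivers minimization among \emph{all} real valuations centered at $o$, whereas the characterization of \cite{Li15b} is stated for the restricted class of $\bC^*$-invariant valuations attached to special test configurations. I would need to verify that this apparently weaker minimization already detects every destabilizing test configuration---equivalently, that the derivative formula of \cite{Li15b} captures the full Futaki invariant along each such ray---so that the two characterizations fit together into the stated equivalence.
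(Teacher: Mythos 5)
Your proposal is correct and follows essentially the same route as the paper: the forward direction combines the Yau--Tian--Donaldson theorem of \cite{CDS15, Tia15} with hypothesis (1) of Theorem \ref{thmsemi}, and the converse invokes the criterion of \cite{Li15b} that minimization of $\hvol$ among $\bC^*$-equivariant divisorial valuations already implies K-semistability. The reconciliation you worry about in your last paragraph is not actually an issue, since global minimization over all of $\Val_{\cC,o}$ trivially implies minimization over the equivariant subclass, which is exactly the hypothesis of the \cite{Li15b} criterion.
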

 \begin{rem}
We expect Corollary \ref{corsemi} to be true for any $\bQ$-Fano variety. In \cite{Li15b}, the first author proved that one direction is true for any $\bQ$-Fano variety. Actually the following stronger criterion for K-semistability was proved there: if $\hvol$ is minimized at $\ord_V$ among $\bC^*$-equivariant divisorial valuations, then $V$ is K-semistable. 
\end{rem}
The same argument in the proof of Theorem \ref{thmlog} also allows us to get orbifold versions of the above results. In particular, we have
the following combined version. See Section \ref{AppSeif} for the notations used here. 
\begin{thm}\label{thmorb}
Let $(V, \Delta=\sum_{i} (1-\frac{1}{m_i})D_i)$ be a smooth orbifold and $f: Y^n\rightarrow (V^{n-1}, \Delta)$ be a Seifert $\bC^*$-bundle. Assume that
the orbifold canonical class $K_{\orb}:=K_V+\Delta$ is anti ample and $c_1(Y/V)=-r^{-1} (K_V+\Delta)$ with $0<r\le n$. Denote by $\ocC_{\orb}$ the associated orbifold projective cone and by $V_\infty$ the compactifying divisor at infinity. Then the following holds:
\begin{enumerate}
\item If $(V, \Delta)$ admits an orbifold K\"{a}hler-Einstein metric of positive Ricci curvature, then there is a conical K\"{a}hler-Einstein metric on $(\ocC_\orb, (1-\frac{r}{n})V_\infty)$.
 
\item If $(\ocC_\orb, (1-\frac{r}{n})V_\infty)$ specially degenerates to a conical K\"{a}hler-Einstein pair, then over the orbifold cone $(\cC_\orb, o)$, $\hvol$ is globally minimized at $\ord_V$.
In particular, in the situation of item 1, we can choose the special degeneration as the trivial one and hence
the same conclusion holds.
\end{enumerate} 
\end{thm}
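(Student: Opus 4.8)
The plan is to mirror the three-step strategy behind Theorem~\ref{thmlog}, now carried out over the Seifert $\bC^*$-bundle geometry of Section~\ref{AppSeif}, and to settle item~(2) by the special-degeneration argument of Theorem~\ref{thmsemi}. For item~(1) I would start from the orbifold K\"{a}hler-Einstein metric on $(V,\Delta)$ and build an explicit conical K\"{a}hler-Einstein potential on the projective cone, generalizing \cite[Lemma~3]{Li13} to the orbifold situation. Because $f\colon Y\to(V,\Delta)$ is a quasi-regular Seifert $\bC^*$-bundle, the orbifold KE metric is exactly the transverse K\"{a}hler-Einstein datum that is needed: via the radius-function ansatz adapted to the $\bC^*$-action one obtains the desired conical K\"{a}hler-Einstein metric on $(\ocC_\orb,(1-\frac{r}{n})V_\infty)$. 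The Seifert multiplicities $m_i$ enter precisely so that the orbifold loci along the $D_i$ are matched by the cone, and the hypothesis $0<r\le n$ guarantees that the angle $1-\frac{r}{n}$ lies in the admissible range $[0,1)$, which is the orbifold replacement for Lemma~\ref{fanoindex}.

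For item~(2) I would first reduce to the case in which $(\ocC_\orb,(1-\frac{r}{n})V_\infty)$ itself carries a conical K\"{a}hler-Einstein metric. In the setting of item~(1) this is immediate via the trivial degeneration; in general the central fibre of the given special degeneration is, by hypothesis, such a pair. In either case Berman's theorem shows the central fibre is log-K-polystable, hence log-K-semistable, equivalently $\lDing$-semistable, and this propagates to the generic fibre exactly as in the proof of Theorem~\ref{thmsemi}. With $\lDing$-semistability of $(\ocC_\orb,(1-\frac{r}{n})V_\infty)$ in hand, I would apply the logarithmic form of Fujita's inequality to the filtration of the section ring determined by an arbitrary real valuation $v$ centered at the vertex $o\in\cC_\orb$. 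This yields a lower bound for $\hvol(v)$ whose right-hand side equals $A_{\cC_\orb}(\ord_V)^n\cdot\vol(\ord_V)$; following \cite{Fuj15, Liu16} the bound is sharp and attained by the canonical valuation, so $\ord_V$ globally minimizes $\hvol$ over $(\cC_\orb,o)$. Alternatively one reinterprets the log-Fujita expression as the derivative of the normalized volume and concludes by the convexity argument of Section~\ref{secdervol}; both routes are formally identical to the non-orbifold proofs.

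The main obstacle is the orbifold dictionary rather than any new analytic ingredient. One must verify that the orbifold cone $\cC_\orb$ is a genuinely $\bQ$-Gorenstein klt singularity as an algebraic variety, translate the Seifert data $\{m_i\}$ and the boundary $\Delta=\sum_i(1-\frac{1}{m_i})D_i$ into the correct fractional boundary structure on $\ocC_\orb$, and check that the analytically defined log-discrepancy and volume of $\ord_V$ coincide with the algebraic quantities entering $\hvol$. Ensuring that the coefficients $1-\frac{1}{m_i}$ propagate consistently through the cone construction, so that Berman's criterion and log-Fujita apply verbatim, is the step demanding the most care; once this correspondence is pinned down, the stability and volume-minimization arguments transfer with no essential change.
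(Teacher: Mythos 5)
Your proposal is correct and follows essentially the same route as the paper: item (1) is the explicit conical K\"{a}hler--Einstein potential on the orbifold projective cone built in equivariant Seifert charts (Proposition \ref{propcKEorb}, generalizing \cite[Lemma 3]{Li13}), and item (2) is the combination of Berman's log-Ding-semistability, its persistence under special degeneration as in Theorem \ref{thmsemi}, and the log-Fujita lower bound of Proposition \ref{proplb} applied with $X=\ocC_\orb$ and $D=(1-\frac{r}{n})V_\infty$, which evaluates exactly to $\hvol(\ord_V)=r^nH^{n-1}$. The "orbifold dictionary" issues you flag (klt-ness of the cone, the relation $K_{\ocC}\sim_\bQ-(1+r)V_\infty$) are precisely what the paper settles via Koll\'ar's results in Lemma \ref{lemKol}.
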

\begin{rem}
With our assumptions, the Seifert bundle $Y$ in general has quotient singularities.
By allowing $(V, \Delta)$ to be a general algebraic stack, we could weaken the condition that $Y$ has quotient singularities and get more general version of the above results. However since we don't use them in this paper, we will not discuss it here. 
\end{rem}

Finally, we can use approximation argument to cover the case of smooth Sasaki-Einstein metrics. Indeed, by using approximation method, we can prove 
\begin{thm}[=Theorem \ref{thm-irSE}]\label{thm-SE}
Let $M$ be a smooth Sasaki-Einstein manifold with the Reeb vector field $\xi$. Let $X=C(M)$ be the K\"{a}hler cone over $M$. Then the valuation associated to $\xi$ minimizes $\hvol_X$ over $\Val_{X,o}$.
\end{thm}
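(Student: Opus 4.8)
The plan is to separate the quasi-regular and irregular cases and to reduce the latter to the results already established for (orbifold) K\"ahler--Einstein cones by a rational approximation of the Reeb field. First recall that the Reeb field $\xi$ generates, through the Sasaki structure on $M$, an effective holomorphic $T\cong(\bC^*)^d$-action on $X=C(M)$ (the complexification of the torus generated by the Reeb flow), and that $\xi$ lies in the interior of the Reeb cone $\sigma\subset\ft_\bR:=\mathrm{Lie}(T)_\bR$. The valuation attached to $\xi$ is the toric (quasi-monomial) valuation $v_\xi(f)=\min\{\langle\alpha,\xi\rangle: f_\alpha\neq 0\}$ defined on the $T$-weight decomposition $f=\sum_\alpha f_\alpha$, and $\hvol_X$ is invariant under rescaling $\xi\mapsto\lambda\xi$. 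If $\xi$ is rational the action is quasi-regular: $\xi$ generates a $\bC^*$, the quotient $(V,\Delta):=(X\setminus\{o\})/\bC^*$ is a K\"ahler--Einstein Fano orbifold, $v_\xi=\ord_V$, and Theorem \ref{thmorb}(2) (or Corollary \ref{corKEcone}) already gives that $v_\xi$ minimizes $\hvol_X$ over $\Val_{X,o}$. Hence I may assume $\xi$ is irregular, so $d\ge 2$ and $\xi$ is irrational.

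Next I would record what Martelli--Sparks--Yau \cite{MSY08} and the reinterpretation in \cite{Li15a} already provide: the map $\eta\mapsto\hvol_X(v_\eta)$ is continuous on the Reeb cone and, under the Sasaki--Einstein hypothesis, is globally minimized at $\eta=\xi$. Thus minimization is known \emph{among the toric valuations} $\{v_\eta:\eta\in\sigma^\circ\}$, and the entire content of the theorem is to upgrade this to all of $\Val_{X,o}$. Choosing rational $\xi_i\in\sigma^\circ\cap\ft_\bQ$ with $\xi_i\to\xi$, continuity gives $\hvol_X(v_{\xi_i})\to\hvol_X(v_\xi)$; this identifies the target value $\hvol_X(v_\xi)$ as a limit of quantities attached to genuinely $\bZ$-graded (quasi-regular) presentations of $X$.

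The heart of the argument is then to obtain, for an arbitrary valuation $v\in\Val_{X,o}$, the inequality $\hvol_X(v)\ge\hvol_X(v_\xi)$ by running the same chain as in the proof of Theorem \ref{thmlog}, but now for the Fano cone $(X,\xi)$ itself. Namely, the transverse K\"ahler--Einstein (Sasaki--Einstein) structure should yield a Ding-type semistability of $(X,\xi)$, to which one applies a log-Fujita valuative inequality: testing against the filtration of $\bigoplus_k H^0$ induced by $v$ bounds $A_X(v)^n\cdot\vol(v)$ from below by the value at $v_\xi$, with equality at $v_\xi$. Because $\xi$ is irrational, I would establish this semistability and the accompanying estimate as the limit, along $\xi_i\to\xi$, of the corresponding $\bZ$-graded statements, using the identity $\hvol_X(v_\xi)=\lim_i\hvol_X(v_{\xi_i})$ to pin down the limiting reference value.

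The main obstacle is precisely that the quasi-regular approximants are \emph{not} Einstein: since $\xi$ is the strict volume minimizer over $\sigma^\circ$, the quotients $(V_i,\Delta_i)$ attached to $\xi_i$ generically carry nonzero (transverse) Futaki invariant and are K-unstable, so one cannot simply invoke Corollary \ref{corKEcone} for each $i$ and pass to the limit. Instead one must establish the Ding/Fujita-type valuative inequality directly for the transverse K\"ahler--Einstein structure at the irrational $\xi$, and the delicate technical point is to control the filtration invariants $A_X(v)$ and $\vol(v)$ of a \emph{general} (neither toric nor $T$-invariant) valuation $v$ uniformly as $\xi_i\to\xi$, so that the inequality is stable under this limit.
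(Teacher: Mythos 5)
Your reduction to the quasi-regular case and your diagnosis of the obstacle are both accurate: the rational approximants $\xi_k\to\xi$ produce quotients $(S_k,\Delta_k)$ that are generically K-unstable, so one cannot invoke Corollary \ref{corKEcone} for each $k$. But your proposed resolution --- ``establish the Ding/Fujita-type valuative inequality directly for the transverse K\"ahler--Einstein structure at the irrational $\xi$'' --- is not carried out; it is a restatement of the theorem in the language of a semistability theory for Fano cones with irrational Reeb fields, which the paper does not develop (that machinery only appears in later work of Collins--Sz\'ekelyhidi and Li--Xu). As written, the core inequality $\hvol_X(v)\ge\hvol_X(v_\xi)$ for a general $v\in\Val_{X,o}$ is asserted rather than proved, so there is a genuine gap at the decisive step.

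The paper's actual mechanism is more elementary and stays entirely within the quasi-regular world. The key is Proposition \ref{prop-hvolvR}: for \emph{any} quasi-regular quotient $(S,\Delta)$ one has $\hvol_X(v)\ge R(S,\Delta)^n\,\hvol(\ord_S)$, where $R(S,\Delta)$ is the greatest Ricci lower bound. This is proved by producing, for every $\gamma<R((S,\Delta);D/m)$, a conical K\"ahler--Einstein metric on $(S,\Delta+\tfrac{1-\gamma}{m}D)$ at the \emph{subcritical} angle $\gamma$ (no stability of $(S,\Delta)$ itself is needed), applying Theorem \ref{thmorb} to the log pair $(X,\tfrac{1-\gamma}{m}\cD)$ with $D$ chosen to miss the center of $v$, and then using Lemma \ref{lem-Rapp} to let $\gamma\to R(S,\Delta)$. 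The approximation step (Lemma \ref{lem-MetricApp}) deforms the Sasaki structure within a fixed CR structure so that $g_{M,k}\to g_M$ in $C^\infty$ with $A(\xi_k)=A(\xi)$; the Gauss--Codazzi relation between $Ric(g_{M,k})$ and the transverse Ricci curvature then forces $R(S_k,\Delta_k)\ge 1-\epsilon_k$ with $\epsilon_k\to 0$. Combining this with $\hvol_X(\ord_{S_k})=\hvol_X(v_{\xi_k})\to\hvol_X(v_\xi)$ gives $\hvol_X(v)\ge R(S_k,\Delta_k)^n\hvol(\ord_{S_k})\to\hvol_X(v_\xi)$. If you want to complete your proof along the paper's lines, the missing ingredients are precisely the invariant $R(S,\Delta)$, the interpolation Lemma \ref{lem-Rapp}, and the quantitative bound of Proposition \ref{prop-hvolvR}; the uniform control of $A_X(v)$ and $\vol(v)$ that worries you is never needed, because $v$ is held fixed and only the reference constants on the right-hand side vary with $k$.
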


\begin{rem}
Theorem \ref{thmorb} and \ref{thm-SE} strength the volume minimization result in \cite{MSY08}.
\end{rem}



We emphasize here that the main common feature of proofs of the above results is that we need to work in the conical/logarithmic setting.
On the one hand we work on the projective cone over the original $\bQ$-Fano variety and construct a conical K\"{a}hler-Einstein metric using a (conical) K\"{a}hler-Einstein metric on the original $\bQ$-Fano variety. On the other hand we need to apply Berman and Fujita's results about Ding-semistability in their log versions. Although this seems innocuous at first sight, it turns out to be crucial for us to get sharp estimates. 

\section{Preliminaries}\label{secpre}

\subsection{Conical K\"{a}hler-Einstein metrics on log Fano pair}\label{seconic}

We recall some notions in the logarithmic setting following \cite{BBEGZ11}. Let $(X, D)$ be a log pair satisfying: 
\begin{itemize}
\item $D$ is an effective $\bQ$-divisor and $-(K_X+D)$ is a $\bQ$-Cartier $\bQ$-divisor; 
\item $(X, D)$ has klt singularities; 
\item $-(K_X+D)$ is ample.
\end{itemize}

From now on, let $\delta>0$ be a positive rational number such that $L=-\delta^{-1}(K_X+D)$ is Cartier. Then $L$ can be considered as a holomorphic line bundle.  A locally bounded Hermitian metric on $L$ is given by a family of locally bounded positive functions $e^{-\vphi}:=\{e^{-\vphi_i}\}$ associated to an affine covering $\{U_i\}_i$ of $X$, such that they are compatible with the transition functions $\{g_{ij}\}\in H^1(\{U_i\}, \cO_X^*)$ of $L$:
\[
\frac{e^{-\vphi_j}}{e^{-\vphi_i}}=\frac{s_j}{s_i}=g_{ij}
\]
where $s_i$ is a local generator of $L$ over $U_i$. $e^{-\vphi}$ is positively curved if $\vphi_i$ is plurisubharmonic on each affine subset $U_i$, i.e. $\vphi_i$ is lower semi-continuous and $\sddb \vphi_i\ge 0$ in the sense of currents. A locally bounded positively curved Hermitian metric $e^{-\vphi}$ on $L$ is called to be a conical K\"{a}hler-Einstein (cKE) potential 
on $(X, D; L)$ if it satisfies the following Monge-Amp\`{e}re equation:
\begin{equation}\label{eqcKE}
(\sddb \vphi)^n={\rm m}_{\delta \vphi},
\end{equation}
where we used the following notations:
\begin{itemize}
\item For the left hand side, the Monge-Amp\`{e}re measure $(\sddb\vphi)^n$ is defined in the sense of pluripotential theory;
\item To define the right hand side, we first choose any integer $k$ such that $k(K_X+D)$ is Cartier and a non vanishing local section $\sigma^k$. For example, we can choose $k=\delta^{-1}$. Then we define locally:
\[
{\rm m}_{\delta\vphi}=\left(\sigma^k \wedge \bar{\sigma}^k\right)^{1/k} \left(|\sigma^{-k}|^2 e^{-k \delta \vphi}\right)^{1/k}=:dV_{(X,D)}(\sigma) \cdot\left( |\sigma|^2 e^{-\delta \vphi}\right). 
\]
It's easy to verify that this is a globally well defined singular volume form.
If we choose a log resolution of $\pi: (Y, f_*^{-1}D) \rightarrow (X, D)$ such that $K_Y=\pi^*(K_X+D)+\sum_i a_i E_i$ with the normal crossing divisor $\bigcup_i E_i$, then locally we have:
\[
\pi^*({\rm m}_{\delta\vphi})=e^{-\psi} \prod_i |f_i|^{2 a_i} d\lambda.
\]
where $\psi$ a bounded function, $E_i=\{f_i=0\}$ and $d\lambda$ is the local Lebesgue measure on $Y$. Notice that the klt condition means that $a_i>-1$, which implies that the density function of $\pi^*({\rm m}_{\delta\vphi})$ with respect to $d\lambda$ is $L^p(d\lambda)$ for some $p>1$. We refer to \cite[Section 3]{BBEGZ11} for more details.
\end{itemize}
Correspondingly, the curvature current $\omega_\vphi:=\sddb\vphi$ is called to be a conical K\"{a}hler-Einstein metric of $(X, D)$ with Ricci curvature $\delta>0$.
If there is a cKE potential on $(X, D; L)$ then the same holds for $(X, D; \lambda L)$ for any $\lambda\in \bQ_{>0}$. So for simplicity, if the proportional constant $\delta>0$ is clear (equivalently when the cohomology class of $L$ is clear) we will just say that there is a cKE metric on $(X, D)$. 

In the case when both $K_X$ and $D$ are $\bQ$-Cartier, we can choose a local non vanishing section $e_1^{m}$ of $mK_X$ and a local non vanishing section $e_2^m$ of $mD$. We then think $e_1$ and $e_2$ as $\bQ$-sections of the $\bQ$-line bundles $K_X$ and $D$, and obtain a $\bQ$-section $\sigma=e_1\cdot e_2$ of $K_X+D$. Notice that on the  open set $U_\alpha$, we can choose $e_2^m=\frac{1}{f_\alpha}\in \cO(mD)(U_\alpha)$ where $mD=\{f_\alpha=0\}$. Then we can write:
\begin{eqnarray*}
(\sigma\wedge\sigma)^{1/m}&=&\left(e_1^m\wedge \bar{e}_1^m\right)^{1/m} \left(e_2^m\wedge \bar{e}_2^m\right)^{1/m}\\
&=&(e_1^m\wedge \bar{e}_1^m)^{1/m}\frac{1}{|f_\alpha|^{2/m}}\\
&=:&(e_1\wedge \bar{e}_1)\frac{|e_2|^{2}}{|s_{D}|^{2}}\\
\end{eqnarray*}
where $s_D$ is the $\bQ$-section of $D$ such that $mD=\{s_D^m=0\}$ and we think $|s_D|^{-2}$ as defining a singular Hermitian metric on the $\bQ$-line bundle
$D$. Using the above notations, we can formally write:
\[
{\rm m}_{\delta\vphi}=(e_1\wedge \bar{e}_1)\frac{|e_2|^2}{|s_D|^2} |e_1^{-1}\cdot e_2^{-1}|^2 e^{-\delta\vphi}=:\frac{e^{-\delta\vphi}}{|s_D|^2}.
\]
\begin{rem}
In the even more special case when $X$ is smooth and $D=(1-\beta)E$ for a smooth prime divisor $E$  and $\beta\in (0,1]$. The solution to \eqref{eqcKE} can be shown to be
give rise to a singular K\"{a}hler-Riemannian metric structure $\omega_\vphi:=\sddb\vphi$ with conical singularities along $E$ (see \cite{Don12, JMR15}). In other words away from $E$, $\omega_{\vphi}$ is 
a smooth K\"{a}hler metric, and near the divisor $E$, $\omega_\vphi$ is modeled on the flat conical metric (choosing local coordinate $z=(z_1, z_2, \dots, z_n)$ with $E=\{z_1=0\}$):
\[
\sddb\left(|z_1|^{2\beta}+\sum_{i=2}^n |z_i|^2\right)=\beta^2 \frac{\sqrt{-1} dz_1\wedge d\bar{z}_1}{|z_1|^{2(1-\beta)}}+\sqrt{-1}\sum_{i=2}^{n}dz_i\wedge d\bar{z}_i.
\]
It's easy to verify that, for this model metric, the associated metric tensor can be written as $\left(dr^2+\beta^2 r^2 d\theta^2\right)\times g_{\bC^{n-1}}$ where $r=|z_1|^\beta$.
So $2\pi \beta$ is the metric cone angle. Note that by taking the $-\sddb\log$ on both sides of
\eqref{eqcKE}, we know that the Ricci curvature of $\omega_\vphi$ satisfies the
following equality:
\[
Ric\left(\sddb \vphi\right)=\delta \sddb\vphi+(1-\beta)2\pi \{E\},
\]
where $\{E\}$ denotes the current of integration along $E$. Furthermore, when $\beta=2\pi/m$ for $m\in \bZ_{>0}$, then conical K\"{a}hler-Einstein metrics on the log smooth pair $(X, (1-\beta)E)$ are nothing but orbifold K\"{a}hler-Einstein metrics. Indeed, in these cases the solution to the equation \eqref{eqcKE} can be shown to be orbifold smooth (see \cite{EGZ09, BBEGZ11}). In particular, if $\beta=1$ we are in the case of smooth K\"{a}hler-Einstein metrics. 
\end{rem}

 \subsection{log-Ding-semistability}\label{secKstability}
In this section we recall the definition of log-K-semistability, and its recent equivalence log-Ding-semistability. The concept of log-K-stability was first 
introduced in \cite{Li11} after the formulation the K-stability by Tian and Donaldson. 
\begin{defn}[see {\cite{Tia97, Don02, Li11, LX11, OS15}}]
Let $(X, D)$ be a log-Fano pair. 
\begin{enumerate}
\item For any $\delta>0\in \bQ$ such that $-\delta^{-1}(K_X+D)=L$ is Cartier, a test configuration (resp. a semi test configuration) of $(X, D; L)$ consists of the following data
\begin{itemize}
\item A log pair $(\cX, \cD)$ admitting a $\bC^*$-action and a $\bC^*$-equivariant flat morphism $\pi: \cX\rightarrow \bC^1$, where the $\bC^*$-action on the base $\bC$ is given by the standard multiplication;
\item A $\bC^*$-equivariant $\pi$-ample (resp. $\pi$-semiample) line bundle $\cL$ on $\cX$ such that there is a $\bC^*$-equivariant isomorphism $(\cX, \cD; \cL)|_{\pi^{-1}(\bC\backslash\{0\})}\cong (X, D; -\delta^{-1}(K_X+D))\times \bC^*$.
\end{itemize}
A normal test configuration is called a special test configuration, if the following are satisfied
\begin{itemize}
\item $\cX_0$ is irreducible and normal, and $(\cX_0, \cD_0)$ is a log Fano pair;
\item $\cL = -\delta^{-1}(K_{\cX/\bC}+\cD)$.
\end{itemize}
In this case, we will say that $(X, D)$ specially degenerates to $(\cX_0, \cD_0)$.
\item 
Assume that $(\cX, \cD; \cL)\rightarrow \bC$ is a normal test configuration. Let $\bar{\pi}: (\bar{\cX}, \bar{\cD}; \bar{\cL})\rightarrow \bP^1$ be the
natural equivariant compactification of $(\cX, \cD; \cL)\rightarrow \bC^1$. The log-CM weight of of $(\cX, \cD; \cL)$ is defined by 
\[
\lCM(\cX, \cD; \cL)=\frac{n \delta^{n+1}\bar{\cL}^{n+1}+(n+1) \delta^{n} \bar{\cL}^{n}\cdot (K_{\bar{\cX}/\bP^1}+\cD)}{(n+1) (-K_X-D)^{n}}.
\]
\item
\begin{itemize}
\item The pair $(X, D)$ is called log-K-semistable if $\lCM(\cX, \cD; \cL)\ge 0$ for any normal test configuration $(\cX, \cD; \cL)/\bC^1$ of $(X, D; -\delta^{-1}(K_X+D))$ for any $\delta\in \bQ_{>0}$ (such that $-\delta^{-1}(K_X+L)$ is Cartier).
\item The pair $(X, D)$ is called log-K-polystable if $\lCM(\cX, \cD; \cL)\ge 0$ for any normal test configuration $(\cX, \cD; \cL)/\bC^1$ of $(X, D; -\delta^{-1}(K_X+D))$, and the equality holds if and only if $(\cX, \cD; \cL) \cong (X, D; L) \times\bC^1$.
\end{itemize}
\end{enumerate}
\end{defn}
We will need a related concept of log-Ding-semistability, which is derived from Berman's work in \cite{Bm12}.
\begin{defn}[see \cite{Bm12, Fuj15}]
\begin{enumerate}
\item
Let $(\cX, \cD; \cL)/\bC^1$ be a normal semi-test configuration of $(X, D; -\delta^{-1}(K_X+D)$ and $(\bar{\cX}, \bar{\cD}; \bar{\cL})/\bP^1$ be its natural
compactification. Let $\Delta_{(\cX,\cD; \cL)}$ be the $\bQ$-divisor on $\cX$ satisfying the following conditions:
\begin{itemize}
\item The support ${\rm Supp}\; \Delta_{(\cX, \cD; \cL)}$ is contained in $\cX_0$;
\item The divisor $\delta^{-1} \Delta_{(\cX, \cD; \cL)}$ is a $\bZ$-divisor corresponding to the divisorial sheaf $\bar{\cL}(\delta^{-1}(K_{\bar{\cX}/\bP^1}+\bar{\cD}))$.
\end{itemize}
\item 
The log-Ding invariant $\lDing(\cX, \cD; \cL)$ of $(\cX, \cD; \cL)/\bC$ is defined as:
\[
\lDing((\cX, \cD); \cL):=\frac{-\delta^{n+1} \bar{\cL}^{n+1}}{(n+1) (-K_X-D)^n}-\left(1-\lct(\cX, \cD-\Delta_{(\cX, \cD; \cL)}; \cX_0)\right).
\]
\item
$(X, D)$ is called log-Ding semistable if $\lDing(\cX, \cD; \cL)\ge 0$ for any normal test configuration $(\cX, \cD; \cL)/\bC$ of $(X, D; -\delta^{-1}K_X)$.
\end{enumerate}
\end{defn}
\begin{rem}
For special test configurations, 
log-CM weight coincides with log-Ding invariant:
\begin{equation}\label{CMstc}
\lCM(\cX, \cD; \cL)=\frac{-\delta^{n+1} \bar{\cL}^{n+1}}{(n+1)(-K_X-D)^n}=\lDing(\cX, \cD; \cL).
\end{equation}
By the recent work of \cite{BBJ15}, (log-)Ding-semistability is equivalent to (log-)K-semistability. 
Notice that since the work in \cite{LX11}, it was known that to test log-K-semistability (or log-K-polystability), one only needs to test on special test configurations.
\end{rem}

\subsection{Filtrations and Fujita's result}

We recall the relevant definitions about filtrations after \cite{BC11} (see also \cite{BHJ15} and \cite[Section 4.1]{Fuj15}).
\begin{defn}\label{defil}
 A {\it good filtration} of a graded $\bC$-algebra $S=\bigoplus_{m=0}^{+\infty}S_m$ is a decreasing, left continuous, multiplicative
and linearly bounded $\bR$-filtrations of $S$. In other words, for each $m\ge 0\in \bZ$, there is a family of subspaces $\{\cF^{x} S_m\}_{x\in \bR}$ of $S_m$ such that:
\begin{enumerate}
\item $\cF^x S_m\subseteq \cF^{x'}S_m$, if $x\ge x'$;
\item $\cF^x S_m=\bigcap_{x'<x}\cF^{x'} S_m$; 
\item $\cF^x S_m\cdot \cF^{x'} S_{m'}\subseteq \cF^{x+x'} S_{m+m'}$, for any $x, x'\in \bR$ and $m, m'\in \bZ_{\ge 0}$;
\item $e_{\min}(\cF)>-\infty$ and $e_{\max}(\cF)<+\infty$, where $e_{\min}(\cF)$ and $e_{\max}(\cF)$ are defined by the following 
operations:

\begin{equation}
\def\arraystretch{1.5}
\begin{array}{l}
e_{\min}(S_m,\cF)=\inf\{t\in\bR; \cF^t S_m\neq S_m\}; \\
e_{\max}(S_m,\cF)=\sup\{t\in\bR; \cF^t S_m\neq 0\};\\
\displaystyle e_{\min}(\cF)=e_{\min}(S_{\bullet}, \cF)=\liminf_{m\rightarrow +\infty} \frac{e_{\min}(S_m, \cF)}{m}; \\
\displaystyle e_{\max}(\cF)=e_{\max}(S_{\bullet}, \cF)=\limsup_{i\rightarrow +\infty} \frac{e_{\max}(S_m, \cF)}{m}. 
\end{array}
\end{equation}

\end{enumerate}
\end{defn}

Define $S^{(t)}=\bigoplus_{k=0}^{+\infty} \cF^{kt} S_k$. When we want to emphasize the dependence of $ S^{(t)}$ on the filtration $\cF$, we also denote $ S^{(t)}$ by $\cF S^{(t)}$.
The following concept of volume will be important for us:
\begin{equation}
\vol\left( S^{(t)}\right)=\vol\left(\cF S^{(t)}\right):=\limsup_{k\rightarrow+\infty}\frac{\dim_{\bC}\cF^{mt}S_m}{m^{n}/n!}.
\end{equation}

Now assume $L$ is an ample line bundle over $X$ and $S=\bigoplus_{m=0}^{+\infty} H^0(X, L^m)=\bigoplus_{m=0}^{+\infty} S_m$ is the section ring of $(X, L)$. Then following \cite{Fuj15}, we can define a sequence of ideal sheaves on $X$:
\begin{equation}\label{filtideal}
I^{\cF}_{(m,x)}={\rm Image}\left(\cF^xS_m\otimes L^{-m}\rightarrow \cO_X\right), 
\end{equation}
and define $\overline{\cF}^x S_m:=H^0(V, L^m\cdot I^{\cF}_{(m,x)})$ to be the saturation of $\cF^x S_m$ such that
$\cF^x S^m\subseteq \overline{\cF}^x S_m$.
$\cF$ is called saturated if $\overline{\cF}^x S_m=\cF^x S_m$ for any $x\in \bR$ and $m\in \bZ_{\ge 0}$. 
Notice that with our notations we have:
\[
\vol\left(\overline{\cF} S^{(t)}\right):=\limsup_{k\rightarrow+\infty}\dim_{\bC}\frac{\overline{\cF}^{kt}H^0(X, kL)}{k^n/n!}.
\]
The following result was proved by Fujita by applying a criterion for Ding semistability (\cite[Proposition 3.5]{Fuj15}) to a sequence of semi-test configurations constructed from a filtration.
We will state a log version of it in Section \ref{seclogFujita}.
\begin{thm}[{\cite[Theorem 4.9]{Fuj15}}]\label{Fujthm}
Assume $(X, -K_X)$ is Ding-semistable. Let $\cF$ be a {\it good} graded filtration of $S=\bigoplus_{m=0}^{+\infty} H^0(X, mL)$ where $L=-\delta^{-1}K_X$. 
Then the pair $(X \times\bC, \cI_\bullet^\delta \cdot (t)^{d_\infty})$ is sub log canonical, where
\begin{eqnarray*}
&&\mathcal{I}_m=I^{\cF}_{(m, m e_{+})}+I^{\cF}_{(m, m e_{+}-1)}t^1+\cdots+I^{\cF}_{(m, m e_{-}+1)} t^{m(e_+-e_-)-1}+(t^{m(e_+-e_-)}),\\
&&d_{\infty}=1-\delta(e_+-e_-)+\frac{\delta^{n+1}}{((-K_X)^n)}\int_{e_-}^{e_+}\vol\left(\overline{\cF} S^{(t)}\right)dt, \\
\end{eqnarray*}
and $e_+, e_-\in \bZ$ with $e_+\ge e_{\max}(S_\bullet, \cF)$ and $e_-\le e_{\min}(S_\bullet, \cF)$.
\end{thm}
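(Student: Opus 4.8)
The plan is to deduce the asserted asymptotic sub-log-canonicity from the Ding-semistability hypothesis by approximating the filtration $\cF$ by a sequence of semi-test configurations, one for each level $m$, applying Ding-semistability to each, and then letting $m\to\infty$. This is the mechanism by which filtration data is converted into a single asymptotic inequality, in the spirit of Fujita's valuative criterion.

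First I would make the flag ideals geometric. For each $m\gg 0$ the ideal $\cI_m\subset\cO_{X\times\bC}$ is supported on the central fibre $X\times\{0\}$ (together with powers of $t$), and its normalized blow-up $\cX_m\to X\times\bC$ carries a natural $\bC^*$-action coming from the $t$-grading, together with a polarization $\cL_m$; this produces a normal semi-test configuration of $(X,L)$. The multiplicativity of $\cF$ forces $I^\cF_{(m,x)}\cdot I^\cF_{(m',x')}\subseteq I^\cF_{(m+m',x+x')}$, so that $\{\cI_m\}_m$ is a graded sequence of ideals — precisely the structure needed for the limit step. Next I would compute the log-Ding invariant $\lDing(\cX_m,\cL_m)$ via the Ding-semistability criterion of \cite[Proposition 3.5]{Fuj15}. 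There are two contributions: the volume term $-\bar\cL_m^{n+1}/((n+1)(-K_X)^n)$, which after rescaling by the blow-up level is a Riemann sum over the jumps of the filtration and, by asymptotic Riemann--Roch applied to $\dim_{\bC}\overline\cF^{kt}H^0(X,kL)$, converges as $m\to\infty$ to $\tfrac{\delta^{n+1}}{(-K_X)^n}\int_{e_-}^{e_+}\vol(\overline\cF S^{(t)})\,dt$; and the discrepancy term, which is controlled by $\lct(X\times\bC,\cI_m)$. This is where the filtered volumes enter and where the saturation is forced, since the ideals $I^\cF_{(m,x)}$ detect only saturated sections, so that $\overline\cF$ rather than $\cF$ appears.

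Then, applying the hypothesis that $(X,-K_X)$ is Ding-semistable to each $(\cX_m,\cL_m)$ gives $\lDing(\cX_m,\cL_m)\ge 0$, which via \cite[Proposition 3.5]{Fuj15} is rephrased as the sub-log-canonicity of the level-$m$ pair. Finally I would pass to the limit: the finite-level log canonical thresholds converge to the log canonical threshold of the graded sequence $\cI_\bullet$ (as in the theory of asymptotic multiplier ideals of graded sequences of ideals), while the volume Riemann sums converge to the integral appearing in $d_\infty=1-\delta(e_+-e_-)+\tfrac{\delta^{n+1}}{(-K_X)^n}\int_{e_-}^{e_+}\vol(\overline\cF S^{(t)})\,dt$. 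Assembling the two limits yields that $(X\times\bC,\cI_\bullet^\delta\cdot(t)^{d_\infty})$ is sub log canonical.

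The main obstacle I expect is exactly this passage to the limit: one must show that the finite-level Ding invariants converge to the asserted expression, i.e. that the limit $m\to\infty$ commutes with taking log canonical thresholds and that the volume Riemann sums genuinely converge. This rests on the continuity of volumes of filtered linear series together with the convergence of log canonical thresholds along graded sequences of ideals, and it requires uniform control of $e_{\max}(S_\bullet,\cF)$ and $e_{\min}(S_\bullet,\cF)$ (the linear boundedness in the definition of a good filtration), as well as careful bookkeeping of the saturation so that $\overline\cF$, and not $\cF$, governs the final integral defining $d_\infty$.
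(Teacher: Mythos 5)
Your proposal follows essentially the same route as the paper (which reproduces Fujita's argument, and spells it out in the log version in Section 4.1): blow up the flag ideals $\cI_m$ to get semi-test configurations level by level, apply the Ding-semistability criterion of \cite[Proposition 3.5]{Fuj15} to obtain sub-log-canonicity of each finite-level pair with exponent $d_m$, and pass to the limit using $\lim_{m\to\infty}d_m=d_\infty$ together with the behaviour of log canonical thresholds along the graded sequence $\cI_\bullet$. The ingredients you flag as the main obstacles (convergence of the intersection-number Riemann sums to the filtered-volume integral, linear boundedness, and the role of saturation) are exactly the points handled by \cite[4.3, 4.6, 4.7, 2.5(1)]{Fuj15}, so the plan is sound.
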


\subsection{Normalized volumes of valuations}

In this section, we briefly recall the concept of valuations and their normalized volumes. Let $Y={\rm Spec}(R)$ be a normal affine variety. A real valuation $v$ on the function field $\bC(Y)$ is a map $v: \bC(Y)\rightarrow \bR$, satisfying:
\begin{enumerate}
\item
$v(fg)=v(f)+v(g)$;
\item $v(f+g)\ge \min\{v(f), v(g)\}$.
\end{enumerate}
In this paper we also require $v(\bC^*)=0$, i.e. $v$ is trivial on $\bC$. Denote by $\cO_v=\{f\in \bC(Y); v(f)\ge 0\}$ the valuation ring of $v$. The valuation $v$ is said to be finite over $R$, or on $Y$, if $\cO_v\supset R$. Let $\Val_Y$ denote the space of all real valuations which are trivial on $\bC$ and finite over $R$. 
Fix a closed point $o\in Y$ with the corresponding maximal ideal of $R$ denoted by $\fm$. We will be interested in the space $\Val_{Y,o}$ of all valuations $v$ with ${\rm center}_Y(v)=o$. If $v\in \Val_{Y,o}$, then $v$ is centered on the local ring $R_\fm$ (see \cite{ELS03}). In other words, $v$ is nonnegative on $R_\fm$ and is strictly positive on the maximal ideal of the local ring $R_\fm$.
For any $v\in \Val_{Y,o}$, we define its valuative ideals:
\[
\fa_p(v)=\{f\in R; v(f)\ge p\}.
\]
The by \cite[Proposition 1.5]{ELS03}, $\fa_p(v)$ is $\fm$-primary, and so is of finite codimension in $R$ (cf. \cite{AM69}). We define the volume of $v$ as the limit:
\[
\vol(v)=\limsup_{p\rightarrow +\infty}\frac{\dim_{\bC}R/\fa_p(v)}{p^n/n!}.
\]
By \cite{ELS03}, \cite{Mus02} and \cite{Cut12}, the limitsup on the right hand side is a limit and is equal to the multiplicity of the graded family of ideals $\fa_\bullet=\{\fa_p\}$:
\begin{equation}\label{vol=mul}
\vol(v)=\lim_{p\rightarrow +\infty}\frac{e(\fa_p)}{p^n}=:e(\fa_\bullet),
\end{equation}
where $e(\fa_p)$ is the Hilbert-Samuel multiplicity of $\fa_p$.

From now on in this paper, we assume that $Y$ has $\bQ$-Gorenstein klt singularities. Following \cite{JM10} and \cite{BFFU13}, we can define the log discrepancy for any valuation $v\in \Val_Y$. This is achieved in three steps in \cite{JM10} and \cite{BFFU13}. Firstly, for a divisorial valuation $\ord_E$ associated to a prime divisor $E$ over $Y$, define $A_Y(E)=\ord_{E}(K_{Z/Y})+1$, where $\pi: Z\rightarrow Y$ is a smooth model of $Y$ containing $E$. Next for any
quasimonomial valuation (also called Abhyankar valuation) $v\in {\rm QM}_\eta(Z,D)$ where $(Z,D=\sum_{k=1}^N D_k)$ is log smooth and $\eta$ is a generic point of an irreducible component of $D_1\cap \dots\cap D_N$, we define $A_Y(v)=\sum_{k=1}^N v(D_k)A_Y(D_k)$. Lastly for any valuation $v\in \Val_Y$, we define 
\[
A_Y(v)=\sup_{(Z,D)}A_Y(r_{Z,D}(v))
\]
where $(Z, D)$ ranges over all log smooth models over $Y$, and $r_{(Z,D)}: \Val_Y\rightarrow {\rm QM}(Z,D)$ are contraction maps that induce a homeomorphism $\displaystyle\Val_Y\rightarrow \lim_{\stackrel{\longleftarrow}{(Z,D)}}{\rm QM}(Z,D)$. For details, see \cite{JM10} and \cite[Theorem 3.1]{BFFU13}. A basic property of $A_Y$ is that for any proper birational morphism $Z\rightarrow Y$, we have (see \cite[Remark 5.6]{JM10}, \cite[Proof 3.1]{BFFU13}):
\begin{equation}\label{ldbirat}
A_{Y}(v)=A_Z(v)+v(K_{Z/Y}).
\end{equation}
Now we can define the normalized volume for any $v\in \Val_{Y,o}$:
\[
\hvol(v):=\hvol_Y(v)=\left\{
\begin{array}{lll}
A_Y(v)^n\vol(v), & \mbox{ if } & A_Y(v)<+\infty,\\
+\infty, & \mbox{ if } & A_Y(v)=+\infty.
\end{array}
\right.
\]
Notice that $\hvol(v)$ is rescaling invariant: $\hvol(\lambda v)=\hvol(v)$ for any $\lambda>0$. By Izumi's theorem (see \cite{Izu85,Ree89,ELS03,BFJ12,JM10,Li15a}), one can show that
$\hvol$ is uniformly bounded from below by a positive number on $\Val_{Y,o}$. If $\hvol$ has a global minimizer $v_*$ on $\Val_{Y,o}$, i.e. $\hvol(v_*)=\inf_{v\in \Val_{Y,o}}\hvol(v)$, then
we will say that {\it $\hvol$ is globally minimized at $v_*$ over $(Y,o)$}. In \cite{Li15a}, the first author conjectured that this holds for all $\bQ$-Gorenstein klt singularities and proved that this is the case under a semi-continuity hypothesis. 

We will also need a log version of $\hvol$. For this, we 
assume $(Y, E)$ is a log pair such that $E$ is an effective $\bQ$-Weil divisor, $K_Y+E$ is 
$\bQ$-Cartier and $(Y, E)$ has klt singularities. Then we can
follow the above definition by replacing $A_Y(v)$ by the log
discrepancy $A_{(Y, E)}(v)$ of $v$ with respect to $(Y, E)$. 
More precisely, for a divisorial valuation $\ord_F$ associated
to a prime divisor $F$ over $Y$, define $A_{(Y,E)}(F)=\ord_F(K_{Z/(Y,E)})+1$,
where $\pi: Z\to Y$ is a smooth model of $Y$ containing $F$ and
$K_{Z/(Y,E)}:= K_Z-\pi^*(K_Y+E)$. Then we may extend the log discrepancy
function $A_{(Y,E)}(\cdot)$ to all valuations $v\in\Val_Y$ in the same way as 
in the absolute case $A_{Y}(\cdot)$. Indeed we can fix a smooth model $\phi: W\rightarrow Y$ and write:
\[
K_W=\phi^*(K_Y+E)+G.
\]
Notice that $G$ is $\bQ$-Cartier. Because $W$ is smooth and $\bC(W)\cong \bC(Y)$, we can define, for any $v\in \Val_Y\cong \Val_W$,
\[
A_{(Y, E)}(v):=A_W(v)+v(G),
\]
where $v(G):=\frac{1}{k}v(\cI_{kG})$ if $kG$ is Cartier with the ideal sheaf $\cI_{kG}$ (see \cite{dFH09}).
With the above definition, we define for any $v\in \Val_{Y,o}$:
\[
\hvol_{(Y,E)}(v)=\left\{
\begin{array}{lll}
A_{(Y,E)}(v)^n\vol(v), & \mbox{ if } & A_{(Y,E)}(v)<+\infty,\\
+\infty, & \mbox{ if } & A_{(Y,E)}(v)=+\infty.
\end{array}
\right.
\]
Notice that in the case when $K_Y$ and $E$ are $\bQ$-Cartier, then we have:
\begin{equation}\label{pldvsld}
A_{(Y,E)}(v)=A_Y(v)-v(E).
\end{equation}
In particular $A_Y(v)=\infty$ if and only if $A_{(Y,E)}(v)=\infty$. 


\section{Log-K-polystability of projective cones}\label{seclogK}

\subsection{Berman's result on log-K-polystability}

In this section, we state and sketch a proof of Berman result on 
log-Ding-polystability which will be shown to imply a log version 
of Fujita's result in Section \ref{seclogFujita}.
\begin{thm}[\cite{Bm12}]\label{logBerm}
If $(X, D; L)$ has a conical K\"{a}hler-Einstein potential, then $(X,D)$ is log-K-polystable. In particular, it is log-K-semistable, and equivalently log-Ding-semistable.

\end{thm}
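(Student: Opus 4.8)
The plan is to follow Berman's variational strategy \cite{Bm12}, carried out in the logarithmic setting, identifying the existence of a conical Kähler-Einstein potential with the minimization of the log-Ding energy functional. First I would introduce, on the space $\mathcal{E}^1(X,L)$ of finite-energy positively curved metrics on $L$, the log-Ding functional
\[
\mathcal{D}(\vphi) = E(\vphi) - \frac{1}{\delta}\log\int_X {\rm m}_{\delta\vphi},
\]
where $E$ is the Monge-Amp\`ere (Aubin-Mabuchi) energy normalized against a fixed reference potential and ${\rm m}_{\delta\vphi}=e^{-\delta\vphi}/|s_D|^2$ is the singular measure from Section \ref{seconic}. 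A direct variation shows that the Euler-Lagrange equation of $\mathcal{D}$ is, after normalizing the total mass, precisely the conical Kähler-Einstein equation \eqref{eqcKE}; hence a cKE potential is a critical point of $\mathcal{D}$.

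The crux is convexity. I would establish that $\mathcal{D}$ is convex along weak geodesics $\{\vphi_t\}$ in $\mathcal{E}^1(X,L)$. The energy term $E$ is affine along such geodesics, so the content is the convexity of $t\mapsto -\frac{1}{\delta}\log\int_X {\rm m}_{\delta\vphi_t}$, which follows from Berndtsson's subharmonicity and positivity theorem for fibrewise Monge-Amp\`ere measures, extended to the klt/conical situation where the background measure carries the $|s_D|^{-2}$ singularity \cite{Bm12, BBEGZ11}. Here the klt condition guarantees the $L^p$-integrability noted in Section \ref{seconic}, which is exactly what makes Berndtsson's argument applicable. Granting this, a critical point of $\mathcal{D}$ is automatically a global minimizer, so the cKE potential minimizes $\mathcal{D}$ over $\mathcal{E}^1(X,L)$.

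Next I would convert test configurations into geodesic rays and compute asymptotic slopes. By \cite{LX11} it suffices to test on special test configurations, and for a given $(\cX,\cD;\cL)$ one builds, from the $\bC^*$-equivariant metric on the compactified total space $\bar{\cL}$ over $\bP^1$, an associated weak geodesic ray $\{\vphi_t\}_{t\ge 0}$ in $\mathcal{E}^1(X,L)$ emanating from a fixed potential. The key computation is that the asymptotic slope of $\mathcal{D}$ along this ray equals the log-Ding invariant,
\[
\lim_{t\to+\infty}\frac{d}{dt}\mathcal{D}(\vphi_t) = \lDing(\cX,\cD;\cL),
\]
where the slope of $E$ produces the intersection-theoretic term $-\delta^{n+1}\bar{\cL}^{n+1}/((n+1)(-K_X-D)^n)$ and the slope of the $\log\int{\rm m}_{\delta\vphi_t}$ term produces the contribution $1-\lct(\cX,\cD-\Delta_{(\cX,\cD;\cL)};\cX_0)$, matching the definition of $\lDing$ in Section \ref{secKstability}.

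Finally I would conclude. Since $\mathcal{D}$ is convex along the ray and attains its minimum at the cKE potential, its asymptotic slope is nonnegative, i.e. $\lDing(\cX,\cD;\cL)\ge 0$ for every special test configuration; this is log-Ding-semistability, equivalently log-K-semistability by \cite{BBJ15}. For the stronger polystability statement, if $\lDing=0$ then $\mathcal{D}$ is affine along the ray, and the rigidity case of Berndtsson's theorem forces the ray to be generated by a holomorphic vector field, so the test configuration is a product, as required. The main obstacle is precisely the convexity-plus-rigidity of the $\log\int{\rm m}_{\delta\vphi_t}$ term along weak geodesics in the singular conical setting: controlling the finite-energy geodesics, justifying the differentiation past the divisor $D$ and the klt locus, and extracting the $\lct$ term as the asymptotic slope all demand delicate pluripotential and Berndtsson-positivity input rather than any formal manipulation.
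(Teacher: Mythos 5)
Your proposal is correct and follows essentially the same route as the paper's sketch of Berman's argument: the log-Ding energy, its boundedness below (equivalently convexity plus the cKE critical point) via Berndtsson's subharmonicity in the klt setting, the identification of the asymptotic slope with $\lDing$ (the $\lct$ term arising as the Lelong number of the fibre-integral term), and Berndtsson rigidity along the weak geodesic ray for polystability. The only cosmetic difference is that the paper uses the Bergman-type family pulled back from the Fubini--Study metric for the semistability slope computation and reserves the geodesic ray for the polystability rigidity step, whereas you run both through the geodesic ray; this changes nothing essential.
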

This was proved by Berman (see \cite[Section 4.3]{Bm12}). For the reader's convenience, we provide a sketch of the proof. 
\begin{proof}
Define the space of locally bounded positively curved Hermitian metrics on $L$:
\[
\cH_{\infty}(X,L):=\{e^{-\vphi} \;|\; \vphi=\{\vphi_j\} \in L^\infty_{\rm loc} (X) \text{ is l.s.c. and } \sddb\vphi\ge 0\},
\]
where $\omega_{\vphi}:=\sddb\vphi\ge 0$ is in the sense of currents.  We fix a locally bounded reference Hermitian metric $e^{-\phi_0}$ on $L$ (see section \ref{seconic}). For any $\vphi\in \cH_\infty(X, h)$ define the log-Ding-energy (see \cite{Din88}) as:
\begin{equation}
F(\vphi)=F^{0}_{\phi_0}(\vphi)- \log\left(\frac{1}{L^{n}}\int_X e^{-\delta\vphi} \right).
\end{equation}
Here we used the notations in Section \ref{seconic} and the Monge-Amp\`{e}re energy $F^0_{\phi_0}(\vphi)$ is defined as:
\[
F^0_{\phi_0}(\vphi)=-\frac{\delta}{(n+1)}\sum_{k=0}^{n}\frac{1}{L^n}\int_X (\vphi-\phi_0) \; (\sddb\vphi)^k\wedge (\sddb\phi_0)^{n-k}.
\]
The reason for considering the log-Ding-energy is that its critical point is exactly the conical K\"{a}hler-Einstein potential, i.e. the solution (up to a constant) to the complex Monge-Amp\`{e}re equation:
\begin{equation}\label{CMA}
(\sddb\vphi)^{n}={\rm m}_{\delta\vphi}.
\end{equation}
Moreover if there is a conical K\"{a}hler-Einstein potential $e^{-\phi_{\rm cKE}}$ on the pair $(X, D)$, then $F(\vphi)$ is bounded from below on $\cH_\infty(X, L)$ by $F(\phi_{\rm cKE})<+\infty$
(see \cite{Bn15, BBEGZ11}). We will later apply this fact to a family of Hermtian metrics coming from any test configuration.

Now suppose $((\cX, \cD); p\cL)$ is a normal test configuration of $((X,D); pL)$. 
Let $e^{-\Phi}$ be any locally bounded Hermitian metric on $p \cL$ which is positively curved, i.e. satisfying $\sddb\Phi\ge 0$ in the sense of current. Following the approach in \cite{Bm12}, we consider the divisor $\Delta=(K_{\cX/\bC^1}+\cD)+\delta \cL$ with its defining section $\tS$. 
Then we get a relative measure on $\cX$:
\[
v_{\Phi}:=|\tS|^2 {\rm m}_{\Phi}.
\]
Here as before, ${\rm m}_{\Phi}$ is a globally defined measure on $\cX$, which on the regular part of $\cX$ is equal to $\frac{\exp\left(-\frac{\delta}{p}\Phi\right)}{|\mathcal{S}_\cD|^2}$ where $\cS_\cD$ is the defining section of $\cD$ on the regular part of $\cX$. Notice that it has a positive curvature current:
\[
\sddb\left(\frac{\delta}{p}\Phi+\log|S_\cD|^2\right)=\left(\frac{\delta}{p}\sddb\Phi\right)+\{S_\cD=0\}
\]
where $\{S_\cD=0\}$ is the current of integration along the divisor $\cD$. By Berndtsson's subharmonicity theorem (see \cite{Bn15, BP08}), we know that the function on the base:
\[
G(t):=-\log \left(\int_{\cX/\bC}v_{\Phi}\right)
\]
is subharmonic. Notice that if $e^{-\Phi}$ is $\bC^*$-invariant which is the case for the metrics we will choose, the subharmonicity implies $G(t)$ is convex in $-\log|t|$.

We can argue as Berman in \cite[Section 3.3]{Bm12} to show that the Lelong number of $G(t)$ is equal to:
\begin{equation}\label{logLelong}
l_0:=1-\lct((\cX, \cD-\Delta); \cX_0).
\end{equation}
Recall that the Lelong number of $G(t)$ can be defined as:
\begin{equation}\label{defLelong}
l_0=\liminf_{t\rightarrow 0}\frac{G(t)}{\log|t|}.
\end{equation}
On the other hand, by the equivariant isomorphism:
\[
\mu: (\cX|_{\bC^*}, p\cL) \cong (X, pL)\times \bC^*
\]
we get a family of Hermitian metrics $\vphi_t\in \cH_\infty(X, L)$ such that 
\[
e^{-\vphi_t}=\left.(\mu^{-1})^* e^{-\frac{1}{p}\Phi}\right|_{X\times\{t\}}.
\] 
From the construction, we can see that:
\begin{equation}\label{Dingdec}
F(\vphi_t)=F^0_{\phi_0}(\vphi_t)-G(t).
\end{equation}
Now there are two natural choices of the locally bounded Hermitian metric on $p\cL$ for which we can carry out the above constructions to get two families of Hermtian metrics on $L\rightarrow X$. For the first one we use the well known fact that there is an $\bC^*$-equivariant embedding:
\[
\alpha_1: \cX\rightarrow \bP^N\times\bC^1 \text{ with } p\cL= \alpha_1^* \cO_{\bP^N}(1)
\]
where $\bC^*$ acts on $\bC^1$ by multiplication and acts on $\bP^N$ through a one-parameter subgroup of ${\rm PGL}(N+1, \bC)$. Pulling back the Fubini-Study metric on $\cO_{\bP^N}(1)$, we get a smooth Hermitian metric, denoted by $e^{-\Phi^{(1)}}$, on $p\cL$. The family of Hermitian metrics $\{\vphi^{(1)}_t\}$ associated to $e^{-\Phi^{(1)}}$ as above is usually called Bergman metrics on $L\rightarrow X$. 
We can prove the log-Ding-semistability by using family. Indeed, it's well known that $F^0_{\phi_0}(\vphi^{(1)}_t)$ is concave in $-\log |t|$, and its slope at infinity is equal to:
\begin{equation}\label{F0dir}
\lim_{t\rightarrow 0}\frac{F^0_{\phi_0}(\vphi^{(1)}_t)}{-\log|t|}=-\frac{\delta\bar{\cL}^{n+1}}{L^{n}}=-\frac{\delta^{n+1}\bar{\cL}^{n+1}}{(-K_X-D)^n}.
\end{equation}
So combining this formula \eqref{logLelong}-\eqref{Dingdec}, we get:
\begin{equation}\label{expansion1}
\lim_{t\rightarrow 0}\frac{F(\vphi^{(1)}_t)}{-\log|t|}=-\frac{\delta^{n+1}\bar{\cL}^{n+1}}{(-K_X-D)^{n}}-(1-\lct((\cX, \cD-\Delta); \cX_0).
\end{equation}
As mentioned above, if $(X, D)$ is conical K\"{a}hler-Einstein, then $F(\vphi^{(1)}_t)$ is bounded from below, so the right-hand-side of \eqref{expansion1} is non-negative. 

To prove the stronger polystability, we need to consider another locally bounded metric, which is obtained by solving a homogeneous complex Monge-Amp\`{e}re equation on $\cX|_{\{|t|\le 1\}}$ 
\[
\left\{
\begin{array}{l}
(\sddb \Phi^{(1)}+\sddb \Psi)^{n+1}=0;\\
\Psi|_{X\times S^1}=0. 
\end{array}
\right.
\]
It's now well known that there exists a bounded solution $\Psi$. Denoting $e^{-\Phi^{(2)}}=e^{-(\Phi^{(1)}+\Psi)}$, then the family of Hermitian metrics $\{\vphi_t^{(2)}\}$ associated to the locally bounded Hermitian metric $e^{-\Phi^{(2)}}$ is a so-called weak geodesic ray in $\cH_{\infty}(X, L)$ emanating from $e^{-\vphi_1}$. 
If we choose $\phi_0$ to be the conical K\"{a}hler-Einstein potential, then we have 
\[
\left.\frac{d}{d(-\log |t|)}\right|_{|t|=1^{-}}F(\vphi^{(2)}_t))\ge 0, \text{ and } \lim_{t\rightarrow 0}\frac{d}{d(-\log|t|)}F(\vphi^{(2)}_t)=0. 
\]
By the convexity of $F(\vphi^{(2)}_t)$ with respect to $-\log|t|$, 
this implies that $F(\vphi^{(2)}_t)$ is affine with respect to $-\log|t|$. It's well known that for the geodesic ray $\{\vphi^{(2)}_t\}$, $F^{0}_{\phi_0}(\vphi^{(2)}_t)$ is affine in the variable $-\log|t|$, so $G(t)$ must be affine with respect to $-\log|t|$. 
By arguing as in \cite{Bm12} (see also \cite{Bn15}), we can conclude that $(\cX, \cD; \cL)$ is indeed a product test configuration.

\end{proof}

\subsection{cKE metric on the projective cone over a cKE variety}\label{secprojcone}
Let $(V, E)$ be a log Fano pair. Assume $H=-r^{-1}(K_V+E)$ is an ample Cartier divisor. Consider the
projective cone over $V$ with polarization $H$:
\[
\ocC={\rm Proj}\left(\bigoplus_{m=0}^{+\infty} S_m\right)
\]
where 
\[
S_m=\bigoplus_{k=0}^{m }\left(H^0(V, kH)\cdot u^{m-k}\right).
\]
Geometrically, $\ocC=\cC\cup V_\infty$ where $\cC$ is the affine cone over $V$ defined as
\[
\cC={\rm Spec}\left(\bigoplus_{m=0}^{+\infty}H^0(V, mH)\right); 
\]
$V_\infty$ is the divisor at infinity, which is clearly isomorphic to $V$. Let $\cE$ be the corresponding
divisor on $\ocC$. By \cite[Section 3.1]{Kol13}, $K_{\ocC}+\cE$ is $\bQ$-Cartier, $(\ocC, \cE)$ has klt singularities and moreover:
\begin{equation}\label{logac}
-(K_{\ocC}+\cE)\sim_{\bQ} (1+r)V_\infty.
\end{equation}
Define the cone angle parameter:
\begin{equation}\label{angle}
\beta=\frac{r}{n}.
\end{equation}
The following lemma is well known. For the absolute case, see \cite[Corollary 2.1.13]{IP99}.
\begin{lem}\label{fanoindex}
 Assume that $(V^{n-1},E)$ is a log Fano pair. If $H=-r^{-1}(K_V+E)$
 is Cartier for $r\in\bQ_{>0}$, then $0<r\leq n$.
\end{lem}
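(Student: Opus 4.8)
The plan is to reduce the inequality $r\le n$ (the lower bound $r>0$ is part of the hypothesis) to a counting statement about the roots of a single Hilbert-type polynomial, and to produce those roots using Kawamata--Viehweg vanishing for the klt pair $(V,E)$. Write $d:=\dim V=n-1$ and consider
\[
P(k):=\chi\bigl(V,\cO_V(kH)\bigr),\qquad k\in\bZ.
\]
Since $H$ is an ample Cartier divisor and $V$ is projective, Snapper's lemma (asymptotic Riemann--Roch) shows that $P$ is a numerical polynomial in $k$ of degree exactly $d$, with positive leading coefficient $(H^d)/d!$; in particular $P\not\equiv0$, so $P$ has at most $d$ integer roots.

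The heart of the argument is to exhibit $\lceil r\rceil-1$ distinct integer roots of $P$, namely every integer $k$ with $-r<k<0$. For such a $k$ I would argue as follows. First, $kH-(K_V+E)=(k+r)H$ is ample because $k+r>0$, so Kawamata--Viehweg vanishing for the klt pair $(V,E)$, applied to the Cartier divisor $kH$, gives $H^i(V,\cO_V(kH))=0$ for all $i>0$; hence $P(k)=h^0(V,\cO_V(kH))$. Second, $kH$ is anti-ample since $k<0$, so $\cO_V(kH)$ has no nonzero global section: a nonzero section would make $kH$ linearly equivalent to an effective divisor, contradicting $(kH)\cdot H^{d-1}=k(H^d)<0$. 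Combining the two vanishings yields $P(k)=0$.

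Finally I would count: the open interval $(-r,0)$ contains exactly $\lceil r\rceil-1$ integers, and each is a root of the degree-$d$ polynomial $P$. Therefore $\lceil r\rceil-1\le d=n-1$, which gives $r\le\lceil r\rceil\le n$, as desired.

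The step I expect to be the main (indeed the only) technical point is the vanishing in the singular, logarithmic setting: one must invoke Kawamata--Viehweg(--Nadel) vanishing for the klt pair $(V,E)$ rather than classical Kodaira vanishing, using that the klt hypothesis makes the relevant multiplier ideal trivial so that $H^i$ of the honest line bundle $\cO_V(kH)$ vanishes. Everything else---polynomiality of $\chi$, the trivial vanishing of $H^0$ for anti-ample line bundles, and the root count---is elementary. The degenerate case $d=0$ (i.e. $n=1$, $V$ a point) can be set aside as vacuous. I note that this is exactly the logarithmic generalization of the classical Fano-index bound of Kobayashi--Ochiai recorded in \cite[Corollary 2.1.13]{IP99}, and that it deliberately avoids Serre duality, which would be awkward here since $K_V$ is not Cartier when $E\neq0$ or $r\notin\bZ$.
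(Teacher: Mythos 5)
Your proof is correct and follows essentially the same route as the paper's: general Kawamata--Viehweg vanishing for the klt pair $(V,E)$ shows that $\chi(V,\cO_V(kH))$, a polynomial of degree $n-1$, vanishes at the $\lceil r\rceil-1$ integers in $(-r,0)$, whence $r\le n$. You merely spell out a couple of steps the paper leaves implicit (the vanishing of $H^0$ for anti-ample $kH$ and the final root count), so there is nothing to add.
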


\begin{proof}
 The proof is the same as in \cite[Corollary 2.1.13]{IP99}. By the 
 general Kawamata-Viehweg vanishing (see \cite[2.16]{Kol97}), 
 $H^i(V,\cO_V(mH))=0$ for any $i>0$ and $m>-r$. By Riemann-Roch,
 $\chi(V, mH)$ is a polynomial in $m$ of degree $n-1$. By the above vanishing,
 we know that this polynomial has roots $m=-1,-2,\cdots,1-\lceil r\rceil$.
 So we get $r\leq n$.
\end{proof}

The above lemma yields $0<r\le n$, so $\beta\in (0,1]$. 
Then it's easy to verify that the pair $(\ocC, \cE+(1-\beta)V_\infty)$ is klt, whose anti-log-canonical divisor
is given as:
\begin{equation}\label{Lcone}
-(K_{\ocC}+\cE+(1-\beta) V_\infty)\sim_{\bQ} \left((1+r)-(1-\frac{r}{n})\right)V_\infty=r\frac{n+1}{n}V_\infty.
\end{equation}
We will denote by $L$ the line bundle associated to the Cartier divisor $V_\infty$. Notice that $\left.L\right|_{V_\infty}\cong H$ so that
$(L^n)=(H^{n-1})$.
\begin{prop}\label{cKEproj}
If $(V, E)$ has a conical K\"{a}hler-Einstein Hermitian metric on $(V, E; H)$, then there exists a conical KE potential on $(\ocC, \cE+(1-\beta)V_\infty; L)$.
\end{prop}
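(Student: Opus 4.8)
The plan is to construct the potential explicitly by a Calabi-type ansatz, generalizing \cite[Lemma 3]{Li13}: the cKE metric on the base propagates to a rotationally symmetric potential on the projective cone whose radial profile solves a single ODE. First I would realize the punctured affine cone $\cC\setminus\{o\}$ as the complement of the zero section in the total space of the negative line bundle $-H$, with projection $\pi$ onto $V$ and fiber coordinate $w$; the vertex $o$ is the blow-down of the zero section ($w\to 0$), while the compactifying divisor $V_\infty$ sits at $w\to\infty$. Let $e^{-\psi}$ be the Hermitian metric on $H$ whose curvature $\omega_V:=\sddb\psi$ is the given cKE metric on $(V,E;H)$; since $H=-r^{-1}(K_V+E)$ this metric satisfies $\mathrm{Ric}(\omega_V)=r\,\omega_V$ away from $E$. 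Introduce the radial variable $t:=\log|w|^2+\psi=\log\|w\|^2_{-H}$, which satisfies $\sddb t=\pi^*\omega_V$ off the zero section, and look for a potential of the form $\Phi=f(t)$ with $f$ smooth, increasing and convex.

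A direct computation gives $\sddb\Phi=f''(t)\,\partial t\wedge\bar\partial t+f'(t)\,\pi^*\omega_V$, so that $\omega_\Phi:=\sddb\Phi$ is a genuine metric exactly when $f',f''>0$, and
\[
\omega_\Phi^n=n\,f''(t)\,f'(t)^{n-1}\,\partial t\wedge\bar\partial t\wedge\pi^*\omega_V^{n-1}.
\]
Applying $-\sddb\log$ to \eqref{eqcKE} rewrites the cKE equation as the Ricci identity $\mathrm{Ric}(\omega_\Phi)=\delta\,\omega_\Phi$ away from the divisors; feeding in $\sddb t=\pi^*\omega_V$ and $\mathrm{Ric}(\omega_V)=r\omega_V$ and then matching the $\partial t\wedge\bar\partial t$- and $\pi^*\omega_V$-components collapses everything to the single autonomous ODE
\[
f''(t)\,f'(t)^{n-1}=c\,e^{\,rt-\delta f(t)},\qquad \delta=\frac{r(n+1)}{n},
\]
for a positive constant $c$, where $\delta$ is read off from \eqref{Lcone}. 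The ansatz $f=A\log\!\left(1+e^{\beta t}\right)$ integrates this in closed form: matching the two exponents forces the rate to be $\beta=r/n$ (the parameter \eqref{angle}) and $A=(n+1)/\delta=n/r$, after which $c$ is determined; the choice of the two integration constants $B=D=1$ costs nothing since it only rescales $w$ and shifts $t$. As a sanity check, $V=\bP^{n-1}$ with $H=\cO(1)$ gives $r=n$, $\beta=1$, $\ocC=\bP^n$, and $f(t)=\log(1+e^t)$ recovers the Fubini-Study potential.

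The main obstacle is the boundary analysis at the two ends, where the ansatz degenerates and where the divisorial (current) terms in \eqref{eqcKE} must be matched. As $t\to-\infty$ one has $f\sim A\,e^{\beta t}\to 0$ and $f',f''\to 0$; I must check that $\Phi$ extends across the vertex $o$ as a locally bounded, positively curved weight on $L=\cO_{\ocC}(V_\infty)$ (which is trivial near $o$) and that the metric genuinely caps off there. As $t\to+\infty$, in the chart $\tilde w=w^{-1}$ cutting out $V_\infty=\{\tilde w=0\}$ one has $t=-\log|\tilde w|^2+\psi$, and since $f'\to A\beta=1$ while $f''\sim A\beta^2 e^{-\beta t}$, the transverse part of $\omega_\Phi$ behaves like $e^{-\beta\psi}|\tilde w|^{2(\beta-1)}\,\sqrt{-1}\,d\tilde w\wedge d\bar{\tilde w}$; comparison with the model flat conical metric of Section~\ref{seconic} identifies the cone angle along $V_\infty$ as exactly $2\pi\beta$, which is what pins the boundary coefficient to $1-\beta$. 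The remaining divisorial term along $\cE$ is inherited directly from the $[E]$-contribution to $\mathrm{Ric}(\omega_V)$ on the base. Assembling these, $\Phi$ is a globally well-defined, locally bounded, positively curved Hermitian metric on $L$ solving \eqref{eqcKE}, hence a cKE potential on $(\ocC,\cE+(1-\beta)V_\infty;L)$, which completes the proof.
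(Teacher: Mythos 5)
Your proposal is correct and follows essentially the same route as the paper: the paper also constructs the potential explicitly via the Calabi-type ansatz $\|s_{V_\infty}\|^2=h/(1+h^{r/n})^{n/r}$ (i.e.\ your $f(t)=\tfrac{1}{\beta}\log(1+e^{\beta t})$ up to the tautological term), verifies the Monge--Amp\`ere equation by a direct curvature computation on $\ocC_{\rm reg}$ using the cKE equation on the base, identifies the cone angle $2\pi\beta$ along $V_\infty$, and extends the identity across the singular set by the klt/local-boundedness argument. The only presentational difference is that you derive the radial ODE and solve it, whereas the paper writes down the solution and checks it; the substance is identical.
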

\begin{proof}
Let $h:=e^{-\phi}$ be the cKE potential on $(V, E; H)$. By the calculation in \cite{Li13}, the 
cKE potential $e^{-\vphi_L}$ on $(\ocC, \cE+(1-\beta)V_\infty; L)$ should satisfy the equality:
\begin{equation}\label{cKE}
\|s_{V_\infty}\|_{h_L}^2:=|s_{V_\infty}|^2 e^{-\vphi_L}=\frac{h}{\left(1+h^{r/n}\right)^{n/r}},
\end{equation}
where $s_{V_\infty}$ is the defining section of $\cO_\ocC(V_\infty)$ and $h$ is viewed as a non-negative function on $\ocC\setminus\{o\}$. 
 \begin{rem}
If $E=\emptyset$ this is just an appropriately normalized form of the result in \cite[Lemma 3]{Li13}.
The expression in \eqref{cKE} is clearly a generalization of the classical Fubini-Study metric on $\cO_{\bP^n}(1)$.
\end{rem}
It's easy to see that the formula \eqref{cKE} defines a locally bounded Hermitian metric $e^{-\vphi_L}$ on $L$. We will directly verify that this is indeed a 
cKE potential. We will denote by $\bar{f}: \ocC\setminus\{o\}\cong H \rightarrow V_\infty=V$ the natural projection. It's also convenient to denote $\rho=\log h$ such that 
\begin{itemize}
\item $h=e^\rho$ and $\sddb \rho=-\bar{f}^*\omega+2\pi \{V_\infty\}$, where $\omega$ is the curvature current of the cKE potential $h$ over $(V, E; H)$ and $\{V_\infty\}$ denotes the current of integration along $V_\infty$;
\item $\partial\rho=\partial\log a+\frac{d\xi}{\xi}$. As a consequence we have:
\[
\bar{f}^*\omega^{n-1}\wedge \partial\rho\wedge \bar{\partial}\rho=\bar{f}^*\omega^{n-1}\wedge \frac{d\xi\wedge d\bar{\xi}}{|\xi|^2}
\]
\end{itemize}
Using the above notation, we can calculate:
\begin{eqnarray*}
-\sddb\log\|s\|_{h_L}^2&=&(-\sddb\log\rho)+\frac{n}{r}\sddb\log(1+e^{\beta\rho})\\
&=&\omega-2\pi \{V_\infty\}+\frac{n}{r}\left(-\frac{\beta e^{\beta\rho}}{1+e^{\beta\rho}}\omega+\frac{\beta^2 e^{\beta\rho}}{(1+e^{\beta\rho})^2} \partial \rho\wedge \bar{\partial}\rho \right)\\
&=&\frac{\omega}{1+e^{\beta\rho}}+\frac{\beta e^{\beta\rho}}{(1+e^{\beta\rho})^2}\partial\rho\wedge \bar{\partial}\rho-2\pi\{V_\infty\}.
\end{eqnarray*}
Here we identify $\omega$ on $V$ with its pull back $\bar{f}^*\omega$ to $\ocC\setminus\{o\}$. So we get the curvature current of $e^{-\vphi_L}$:
\begin{equation}\label{curvproj}
\sddb\vphi_L=\frac{\omega}{1+e^{\beta\rho}}+\frac{\beta e^{\beta\rho}}{(1+e^{\beta\rho})^2}\partial\rho\wedge\bar{\partial}\rho. 
\end{equation}
This is clearly a positive current over $\ocC\setminus \{o\}$. The volume form (or the Monge-Amp\`{e}re measure) of $\sddb\vphi_L$ is equal to 
\begin{eqnarray*}
(\sddb\vphi_L)^n&=&n\beta\frac{e^{\beta\rho}}{(1+e^{\beta\rho})^{n+1}}\bar{f}^*\omega^{n-1}\wedge \partial\rho\wedge \bar{\partial}\rho\\
&=&n\beta \frac{e^{\beta \rho}}{(1+e^{\beta\rho})^{n+1}}\bar{f}^*\omega^{n-1}\wedge \frac{d\xi\wedge d\bar{\xi}}{|\xi|^2}.
\end{eqnarray*}
Because $e^{-\phi}$ is conical K\"{a}hler-Einstein on $(V, E; H=-r^{-1}(K_V+E))$, it satisfies the following equation on the regular part of $V$:
\begin{equation}\label{cKEbase}
\omega^{n-1}=\frac{e^{-r\phi}}{|s_E |^2},
\end{equation}
where $s_E$ is the defining section of $\cO_V(E|_{V_{\rm reg}})$.
Taking the curvature on both sides, we get:
\begin{equation}\label{cKEV}
Ric(\omega^{n-1})=\sddb(r\phi+\log |s_E|^2)=r\omega+2\pi \{E\}.
\end{equation}
We define a measure on $\ocC_{\rm reg}$:
\begin{eqnarray*}
{\bf M}&:=&(\sddb\vphi_L)^n\cdot |s_{V_\infty}|^{2(1-\beta)} |s_\cE|^2\\
&=&n\beta \frac{e^{\beta\rho}}{\left(1+e^{\beta\rho}\right)^{n+1}}\bar{f}^*\omega^{n-1}\wedge \partial \rho\wedge \bar{\partial}\rho\cdot |s_{V_\infty}|^{2(1-\beta)}|s_\cE|^2\\
&=&n\beta \frac{e^{\beta\rho}|s_{V_\infty}|^{2(1-\beta)}}{\left(1+e^{\beta\rho}\right)^{n+1}}\frac{d\xi\wedge d\bar{\xi}}{|\xi|^2}\wedge \bar{f}^*\left(\omega^{n-1} \cdot |s_E|^2\right)\\
\end{eqnarray*}
Using the cKE equation \eqref{cKEbase}, we know that ${\bf M}$ is induced given by a bounded (indeed continuous) volume form on the regular part of $\ocC\setminus\{o\}$, which induces a locally 
bounded Hermitian metric on $-(K_\ocC+(1-\beta)V_\infty+\cE)$: 
\begin{eqnarray*}
e^{-\psi} 
&:=&n\beta \frac{e^{\beta\rho}|s_{V_\infty}|^{2(1-\beta)}}{\left(1+e^{\beta\rho}\right)^{n+1}}\frac{d\xi\wedge d\bar{\xi}}{|\xi|^2}\wedge \bar{f}^*\left(\omega^{n-1} \cdot |s_E|^2\right)\\
&=&n\beta \frac{e^{\beta\rho}|s_{V_\infty}|^{2(1-\beta)}}{\left(1+e^{\beta\rho}\right)^{n+1}}\frac{d\xi\wedge d\bar{\xi}}{|\xi|^2}\wedge \bar{f}^*\left(e^{-r\phi}\right).
\end{eqnarray*}
We can easily calculate the curvature current of $e^{-\psi}$:
\begin{eqnarray*}
\def\arraystretch{2}
\sddb\psi
&=&(-\beta \sddb \rho+2\pi\beta\{V_\infty\})+(n+1)\sddb \log(1+e^{\beta \rho})+\bar{f}^*\left(r\omega\right)\\
&=&\beta\omega+r \omega+(n+1)\left(-\frac{\beta e^{\beta\rho}}{1+e^{\beta\rho}}\omega+\frac{\beta^2 e^{\beta\rho}}{(1+e^{\beta\rho})^2} \partial \rho\wedge \bar{\partial}\rho \right)\\
&=&(n+1)\beta\left(\frac{\omega}{1+e^{\beta\rho}}+\frac{\beta e^{\beta\rho}}{(1+e^{\beta\rho})^2}\partial\rho\wedge\bar{\partial}\rho\right)=\frac{(n+1)r}{n}\sddb\vphi_L.
\end{eqnarray*}
Compared to the identity \eqref{Lcone}, the above equality means that the Hermitian metric $e^{-\psi}$ on $-(K_{\ocC}+\cE+(1-\beta)V_\infty)$ is equal to $e^{-r\frac{n+1}{n}\vphi_L}$ up to positive rescaling. 
Moreover, combining the above equalities, we see that $e^{-\vphi_L}$ satisfies the following equation on the regular part of $\ocC$:
\begin{eqnarray*}
(\sddb\vphi_L)^n&=&\frac{e^{-r\frac{n+1}{n}\vphi_L}}{|s_{V_\infty}|^{2(1-\beta)}|s_\cE|^2}.
\end{eqnarray*}
This identity holds a priori on $\ocC_{\rm reg}$. However, it extends to hold on the whole $\ocC$ in the pluripotential sense because both sides does not charge the singular set $\ocC_{\rm sing}$. Indeed, the latter follows from the fact $\vphi_L$ is locally bounded and $(\ocC, (1-\beta)V_\infty+\cE)$ is klt.

So by the definition in Section \ref{seconic}, we see that $e^{-\vphi_L}$ is indeed a conical K\"{a}hler-Einstein potential on $(\ocC, \cE+(1-\beta)V_\infty; L)$.

\end{proof}

\subsection{An orbifold version}
For the notions used in the following proposition, see Section \ref{AppSeif}. 

\begin{prop}\label{propcKEorb}
Let $f: Y^n\rightarrow (V^{n-1}, \Delta=\sum_{i} (1-\frac{1}{m_i})D_i)$ be a Seifert $\bC^*$-bundle. Assume $c_1(Y/V)$ is positive and let $\ocC_{\orb}$ denote the associated orbifold projective cone. Assume that $(V, \Delta)$ is orbifold smooth and the following two conditions are satisfied:
\begin{enumerate}
\item
The orbifold canonical class $K_{\orb}:=K_V+\Delta$ is anti ample and there is a smooth orbifold K\"{a}hler-Einstein metric on $(V, \Delta; -K_{\orb})$
\item
The Chern class $c_1(Y/V)=-r^{-1} (K_V+\Delta)$ with $0<r\le n$.
\end{enumerate} 
Then there exists a conical K\"{a}hler-Einstein potential on $(\ocC_{\orb}, (1-\beta)V_\infty; \cO_{\ocC_\orb}(V_\infty))$ with $\beta=\frac{r}{n}$.
\end{prop}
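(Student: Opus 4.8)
The statement to prove, Proposition \ref{propcKEorb}, is the orbifold analogue of Proposition \ref{cKEproj}. The key difference is that instead of a genuine log Fano pair $(V,E)$ with an affine cone, we now have a Seifert $\bC^*$-bundle $f\colon Y\to (V,\Delta)$ over an orbifold with cyclic quotient structure encoded by $\Delta=\sum_i(1-\tfrac{1}{m_i})D_i$, and we must produce a conical K\"{a}hler-Einstein potential on the orbifold projective cone $\ocC_\orb$. My plan is to show that the explicit formula \eqref{cKE} from Proposition \ref{cKEproj}, together with its verification via the Monge-Amp\`ere computation, carries over essentially verbatim once everything is interpreted in the orbifold category.

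**The plan of attack.**

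First I would set up the orbifold version of the radius/potential data: let $h=e^{-\phi}$ be the given orbifold K\"{a}hler-Einstein metric on $(V,\Delta;-K_\orb)$, viewed as a smooth Hermitian metric in the orbifold sense, and define the candidate potential $e^{-\vphi_L}$ on $\ocC_\orb$ by exactly the formula
\[
\|s_{V_\infty}\|_{h_L}^2=|s_{V_\infty}|^2 e^{-\vphi_L}=\frac{h}{\left(1+h^{r/n}\right)^{n/r}},
\]
now reading $h$ as an orbifold function on $\ocC_\orb\setminus\{o\}$. Second, I would repeat the curvature computation of Proposition \ref{cKEproj}: writing $\rho=\log h$, compute $\sddb\vphi_L$ and its top power $(\sddb\vphi_L)^n$, then feed in the base cKE equation, which in the orbifold setting reads $Ric(\omega^{n-1})=r\omega+2\pi\{\Delta\}$ rather than $r\omega+2\pi\{E\}$. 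The upshot is the same identity
\[
(\sddb\vphi_L)^n=\frac{e^{-r\frac{n+1}{n}\vphi_L}}{|s_{V_\infty}|^{2(1-\beta)}|s_\cE|^2}
\]
where now $\cE$ is replaced by the orbifold divisor coming from $\Delta$, and $\beta=r/n\in(0,1]$ is guaranteed by hypothesis (2). This shows $e^{-\vphi_L}$ solves the cKE Monge-Amp\`ere equation on the regular orbifold locus.

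**The main obstacle and how to handle it.**

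The crux is not the differential-geometric computation, which is formally identical to Proposition \ref{cKEproj}, but rather the passage from the orbifold-regular locus to the whole of $\ocC_\orb$ and the verification that the metric genuinely defines a cKE potential in the sense of Section \ref{seconic}. The difficulty is twofold: one must check that $\ocC_\orb$, together with the boundary divisor $(1-\beta)V_\infty$ and the orbifold structure, is klt with $\bQ$-Cartier anti-log-canonical class, and one must confirm that the locally bounded potential $\vphi_L$ extends the equation across the orbifold singularities (the images of the $D_i$ and the vertex $o$). Here I would invoke that an orbifold cKE metric is, by the remark in Section \ref{seconic}, the same as a conical K\"{a}hler-Einstein metric on the underlying log smooth pair when the cone angles are of the form $2\pi/m_i$; thus the quotient singularities of $Y$ and $\ocC_\orb$ contribute exactly the boundary coefficients $(1-\tfrac{1}{m_i})$, and the klt property follows as in \cite[Section 3.1]{Kol13} applied to the orbifold cone. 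The extension of the Monge-Amp\`ere identity across the singular set then follows from the same argument as in Proposition \ref{cKEproj}: since $\vphi_L$ is locally bounded and the pair is klt, neither side of the equation charges the singular locus, so the identity valid on the orbifold-regular part extends in the pluripotential sense to all of $\ocC_\orb$. I expect that the only genuinely new bookkeeping is translating the Seifert bundle data $c_1(Y/V)=-r^{-1}(K_V+\Delta)$ into the cone-angle normalization $\beta=r/n$ and matching the local orbifold charts on $\ocC_\orb$ with those on $V$ under the projection $\bar f$, after which the proof of Proposition \ref{cKEproj} applies without essential change.
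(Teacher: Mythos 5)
Your overall strategy (same explicit formula for the potential, same curvature computation, klt plus local boundedness to extend the equation across the singular set) is the one the paper uses, but there is a genuine error in the key step: your proposed Monge--Amp\`ere identity
$(\sddb\vphi_L)^n=e^{-r\frac{n+1}{n}\vphi_L}/\bigl(|s_{V_\infty}|^{2(1-\beta)}|s_\cE|^2\bigr)$
carries a spurious factor $|s_\cE|^2$ and is the equation for a cKE potential on the pair $(\ocC_\orb,(1-\beta)V_\infty+\cE)$, not on $(\ocC_\orb,(1-\beta)V_\infty)$ as claimed. On the orbifold projective cone the branch divisor $\Delta$ does \emph{not} reappear as a boundary divisor: by Lemma \ref{lemKol} one has $K_{\ocC^\circ}+V_\infty=\bar f^*(K_V+\Delta)$ and $K_{\ocC}\sim_\bQ -(1+r)V_\infty$, i.e.\ the $\sum_i(1-\frac{1}{m_i})D_i$ contribution is exactly absorbed by the ramification of the Seifert projection along the multiple fibers (equivalently, by the quotient singularities of $\ocC_\orb$). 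So your assertion that ``the quotient singularities of $Y$ and $\ocC_\orb$ contribute exactly the boundary coefficients $(1-\frac{1}{m_i})$'' points in the wrong direction: if the factor $|s_\cE|^2$ survived, the anti-log-canonical class of the resulting pair would no longer be a rational multiple of $V_\infty$, and the proposition as stated would not follow.

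The cancellation you are missing is precisely what the paper's proof is organized around: it does \emph{not} run the computation of Proposition \ref{cKEproj} with $E$ replaced by $\Delta$ on the underlying variety, but instead works in the local uniformizing charts $W_i\times\bC$ (with the $\mu_m$-action), where the pulled-back orbifold K\"ahler--Einstein metric $e^{-\wtd\phi_i}$ is genuinely smooth and satisfies the \emph{unlogged} equation; one then performs the $\Delta=0$ version of the Proposition \ref{cKEproj} computation upstairs, checks $\mu_m$-invariance, and descends to get
$(\sddb\Phi)^n=e^{-\frac{r(n+1)}{n}\Phi}/|s_{V_\infty}|^{2(1-\beta)}$
on $\ocC_\orb^\circ$, which by \eqref{ccprojcn} is exactly the cKE equation for $(\ocC_\orb,(1-\beta)V_\infty)$. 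To repair your argument you should replace the global ``base equation $Ric(\omega^{n-1})=r\omega+2\pi\{\Delta\}$ on $V$'' step by this chart-by-chart computation (or else verify explicitly that the Jacobian of the local uniformization cancels the $|s_\Delta|^{-2}$ factor when pulled back to the cone); the final extension argument across the vertex is then the same as yours.
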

\begin{proof}
For simplicity, we will just denote $\ocC_\orb$ by $\ocC$ and by $\ocC^\circ$ the punctured projective cone $\ocC\setminus\{o\}$.  Assume $\omega$ is a smooth orbifold K\"{a}hler metric on $(V, \Delta)$ in the orbifold first Chern class $c_1(H_{\orb})=-r^{-1}c_1(K_{\orb})$. Then there exists a smooth orbifold Hermitian metric $e^{-\phi}=\{e^{-\phi_i}\}$ on $H_{\orb}$ whose orbifold Chern curvature is $\omega$. In other words, for any local orbifold chart $\pi_i: W_i\rightarrow W_i/\mu_m\cong U_i$, $e^{-\wtd{\phi}}:=\pi_i^*e^{-\phi_i}$ is a smooth Hermitian metric on the line bundle $\pi_i^*H_{\orb}$, whose Chern curvature $\wtd{\omega}:=\sddb \pi_i^* \phi_i=\pi_i^*\omega$ is a smooth K\"{a}hler metric on $W_i$.

Following \eqref{cKE}, we define a locally bounded Hermitian metric on the $\bQ$-line bundle $\bar{f}^* H_\orb=V_\infty$ by first defining $\mu_m$-invariant functions over $W_i\times \bC$:
\[
e^{-\wtd{\Phi}_i}=\frac{e^{-\wtd{\phi}_i}}{\left(1+e^{-\beta\wtd{\phi}_i}|\xi|^{2\beta}\right)^{\beta^{-1}}}.
\]
where $\xi=\xi_i$ is the coordinate of the second factor $\bC$. Then its curvature current $\wtd{\Omega}_i=\sddb \wtd{\Phi}_i$ is a (1,1)-current on $W_i\times \bC$ and can be calculated as in \eqref{curvproj}:
\[
\wtd{\Omega}_i=\frac{\wtd{\omega}}{1+e^{-\beta\wtd{\phi}_i}|\xi|^{2\beta}}+\frac{\beta e^{-\beta\wtd{\phi}_i}}{(1+e^{-\beta\wtd{\phi}_i}|\xi|^{2\beta})^2}\frac{(d\xi-\xi\partial\wtd{\phi}_i)\wedge\overline{(d\xi-\xi\partial\wtd{\phi}_i)}}{|\xi|^{2(1-\beta)}}
\]
$\wtd{\Omega}_i$ is smooth away from $W_i\times\{0\}$ and has conical singularities of angle $2\pi \beta$ along $W_i\times\{0\}$. 
$e^{-\wtd{\Phi}_i}$ descends to become a Hermitian metric on the $\bQ$-line bundle $\bar{f}^*H_{\orb}|_{\bar{f}^{-1}U_i}$ whose Chern curvature $\Omega_i$ is induced from $\wtd{\Omega}_i$ under the quotient $W_i\times\bC\rightarrow W_i\times\bC/\mu_m=\bar{f}^{-1}(U_i)$. Using the transition functions of the $\bQ$-line bundle $\bar{f}^*H_{\orb}$, it's easy to verify that $\Omega_i=\Omega_j$ on any intersection $U_i\cap U_j$ so that there is a globally defined (1,1)-current $\Omega$ on $\ocC^{\circ}$. 
The collection $e^{-\Phi}:=\{e^{-\Phi_i}\}$ defines a Hermitian metric on $\bar{f}^*H_\orb$ over $\ocC^\circ$ whose curvature is the globally defined (1,1)-current $\Omega$. 

Now by taking into account of the formula \eqref{ccprojcn}, we can carry out the same calculation in the proof of Proposition \ref{cKEproj} for the case $\Delta=0$ using the above equivariant charts. So we see that $e^{-\Phi}$ satisfies the conical K\"{a}hler-Einstein equation on $\ocC^{\circ}$:
\begin{equation}\label{orbKE}
(\sddb \Phi)^n=\frac{e^{-\frac{r(n+1)}{n}\Phi}}{|s_{V_\infty}|^{2(1-\beta)}},
\end{equation}
where $s_{V_\infty}$ is $\bQ$-section of the $\bQ$-line bundle $\cO_{\ocC}(V_\infty)$.
Because $\beta=r/n$ and, by Lemma \ref{lemKol}, $-K_\ocC=(r+1)V_\infty=\frac{r(n+1)}{n}V_\infty+(1-\frac{r}{n})V_\infty$, both sides of \eqref{orbKE} can be considered as singular Hermitian metrics on $-K_\ocC$. Arguing as before, \eqref{orbKE} actually holds on the whole $\ocC$ because both sides do not charge the vertex $o$. So we see that $e^{-\Phi}$ is indeed a cKE potential on $(\ocC_{\orb}, (1-\beta)V_\infty; \cO_{\ocC}(V_\infty))$.

\end{proof}

In the above theorem, we assumed that $(V, \Delta)$ is a smooth orbifold and the metrics under consideration are all orbifold smooth. However one can show that the smoothness assumptions are not necessary and the conclusion still holds for locally bounded orbifold metrics in the sense of pluripotential theory. So we can get a more general result as follows. Since we don't need this more general version in this paper, we leave the proof to the reader.
\begin{prop}
Let $f: Y^n\rightarrow (V^{n-1}, \Delta=\sum_{i} (1-\frac{1}{m_i})D_i)$ be a Seifert $\bC^*$-bundle.
Assume $c_1(Y/V)$ is positive and let $\ocC_{\orb}$ denote the associated orbifold projective cone. Assume that
$E$ is an orbifold $\bQ$-Cartier divisor such that the following two conditions are satisfied:
\begin{enumerate}
\item
The orbifold log canonical class $K_V+\Delta+E$ is anti ample and there is an orbifold conical K\"{a}hler-Einstein metric on $(V, \Delta; E)$;
\item
The Chern class $c_1(Y/V)=-r^{-1} (K_V+\Delta+E)$ for $0<r\le n$;
\item The following equality holds as an identity of effective $\bQ$-divisors:
\[
K_{\ocC^\circ}+V_\infty+\cE=\bar{f}^*(K_V+\Delta+E).
\]

\end{enumerate} 

Then there exists a conical K\"{a}hler-Einstein metric on $(\ocC_{\orb}, (1-\beta)V_\infty+\cE; \cO_{\ocC_\orb}(V_\infty))$ with $\beta=\frac{r}{n}$.
\end{prop}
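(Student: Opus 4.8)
The plan is to merge the two constructions already given: Proposition~\ref{cKEproj}, which handles the logarithmic divisor $E$ by pluripotential methods but without any orbifold structure, and Proposition~\ref{propcKEorb}, which handles the orbifold structure chart-by-chart but assumes everything is smooth. Accordingly, let $e^{-\phi}$ be the orbifold conical K\"{a}hler-Einstein potential on $(V,\Delta;E)$ furnished by hypothesis~(1), with curvature $\omega=\sddb\phi$, and recall that on the orbifold regular locus it solves the base equation $\omega^{n-1}=e^{-r\phi}/|s_E|^2$ of~\eqref{cKEbase}. On each orbifold uniformizing chart $\pi_i\colon W_i\to W_i/\mu_m\cong U_i$ I would define the candidate weight on $\bar{f}^*H_\orb$ over $W_i\times\bC$ by the same formula as in Proposition~\ref{propcKEorb},
\[
e^{-\wtd{\Phi}_i}=\frac{e^{-\wtd{\phi}_i}}{\left(1+e^{-\beta\wtd{\phi}_i}|\xi|^{2\beta}\right)^{\beta^{-1}}},\qquad \wtd{\phi}_i=\pi_i^*\phi_i,\ \beta=\tfrac{r}{n},
\]
which is the local form of the cone potential~\eqref{cKE}. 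Since $\phi$ is only locally bounded, so is $\wtd{\Phi}_i$; the weights descend through $W_i\times\bC\to(W_i\times\bC)/\mu_m=\bar{f}^{-1}(U_i)$ and patch via the transition functions of $\bar{f}^*H_\orb$ to a globally defined, locally bounded, positively curved weight $e^{-\Phi}$ on $\ocC^\circ$, exactly as in Proposition~\ref{propcKEorb}.

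On the locus where $\phi$ is smooth I would then repeat the curvature computation of Proposition~\ref{cKEproj}. The expression for $\sddb\wtd{\Phi}_i$ coincides with the one in the proof of Proposition~\ref{propcKEorb}, so $(\sddb\wtd{\Phi}_i)^n$ again splits as the vertical conical form in $\xi$ wedged with the pull-back of $\wtd{\omega}^{n-1}$. The sole new feature is that $\wtd{\omega}^{n-1}$ now carries the extra factor $|s_E|^{-2}$ coming from~\eqref{cKEbase}; substituting and cancelling exactly as in the passage leading to~\eqref{orbKE} yields, on the regular part of $\ocC^\circ$,
\[
(\sddb\Phi)^n=\frac{e^{-\frac{r(n+1)}{n}\Phi}}{|s_{V_\infty}|^{2(1-\beta)}\,|s_\cE|^2},
\]
where $s_\cE$ is the defining $\bQ$-section of $\cE$. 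This is the conical K\"{a}hler-Einstein equation for $(\ocC_\orb,(1-\beta)V_\infty+\cE;\cO_{\ocC_\orb}(V_\infty))$ once both sides are recognized as metrics on one and the same $\bQ$-line bundle. That recognition is where hypothesis~(3) enters: in the purely orbifold case the relation $-K_\ocC=(r+1)V_\infty$ is automatic from Lemma~\ref{lemKol}, but with $E$ present one must impose $K_{\ocC^\circ}+V_\infty+\cE=\bar{f}^*(K_V+\Delta+E)$. Combining this with $c_1(Y/V)=-r^{-1}(K_V+\Delta+E)$ from~(2) and $\bar{f}^*H_\orb=V_\infty$ gives $-K_\ocC=(r+1)V_\infty+\cE=\tfrac{r(n+1)}{n}V_\infty+(1-\beta)V_\infty+\cE$, so both $(\sddb\Phi)^n$ and the right-hand side above are indeed singular Hermitian metrics on $-K_\ocC$, and their equality is meaningful.

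It remains to propagate the identity from the regular locus to all of $\ocC$, and this is the main obstacle. Because the base potential is merely locally bounded rather than orbifold smooth, the chart computations only establish the equation away from $\ocC_{\rm sing}$, the vertex $o$, and the locus where $\phi$ fails to be smooth. The resolution is pluripotential-theoretic, just as in Proposition~\ref{cKEproj}: since $\Phi$ is locally bounded its Monge-Amp\`{e}re measure is well defined by Bedford--Taylor theory, the klt hypothesis on $(\ocC_\orb,(1-\beta)V_\infty+\cE)$ makes the density on the right-hand side $L^p$ for some $p>1$, and neither measure charges the excluded set, which is pluripolar. Hence the identity, valid on a dense open set, extends to the whole of $\ocC$, and $e^{-\Phi}$ is the asserted conical K\"{a}hler-Einstein potential. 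The delicate point — keeping the $\mu_m$-equivariant gluing, the cone-angle factor $|s_{V_\infty}|^{2(1-\beta)}$, and the logarithmic factor $|s_\cE|^2$ mutually consistent under~(3), while simultaneously replacing the smooth chart computations by pluripotential ones — is precisely what makes this the technical heart of the argument.
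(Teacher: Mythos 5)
Your proposal is correct and follows exactly the route the paper intends: the authors explicitly leave this proof to the reader, remarking only that it is obtained by combining the chart-wise orbifold construction of Proposition \ref{propcKEorb} with the logarithmic computation of Proposition \ref{cKEproj} and replacing the smooth curvature calculations by pluripotential-theoretic ones for locally bounded potentials. Your identification of hypothesis (3) as the substitute for Lemma \ref{lemKol} (giving $-K_{\ocC}=(r+1)V_\infty+\cE$ so that both sides of the Monge--Amp\`{e}re equation live on the same $\bQ$-line bundle), and your use of the klt condition plus Bedford--Taylor theory to extend the identity across the pluripolar bad locus, match the paper's stated strategy.
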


\subsubsection{Appendix: Seifert \texorpdfstring{$\bC^*$}{C*}-bundles and orbifolds}\label{AppSeif}
For the reader's convenience, we recall the definition of Seifert $\bC^*$-bundle and their associated orbifolds after Koll\'{a}r. The following materials can be found in \cite{Kol04a, Kol04b}). 
\begin{defn}
Let $V$ be a normal complex space. A Seifert $\bC^*$-bundle over $V$ is a normal complex space $Y$ together with a morphism $f: Y\rightarrow V$ and a $\bC^*$-action on $Y$ such that the following two conditions are satisfied:
\begin{enumerate}
\item $f$ is Stein and $\bC^*$-equivariant (with respect to the trivial action on $V$);
\item For every $p\in V$, the $\bC^*$-action on the reduced fiber $Y_p:={\rm red} f^{-1}(p)$, $\bC^*\times Y_p\rightarrow Y_p$ is $\bC^*$-equivariantly biholomorphic to the natural $\bC^*$-action on 
$\bC^*/\mu_m$ for some $m=m(p, Y/V)$, where $\mu_m\subset\bC^*$ denotes the group of $m$th roots of unity. 
\end{enumerate} 
The number $m(p, Y/V)$ is called the multiplicity of the Seifert fiber over $p$.
\end{defn}
One can always assume that the $\bC^*$-action is effective, that is, $m(p, Y/V)=1$ for general $p\in V$.
Let $f: Y\rightarrow V$ be a Seifert $\bC^*$-bundle. The set of points $\{p\in V; m(p, Y/V)>1\}$ is a closed analytic subset of $V$. It can be decomposed as the union of Weil divisors
$\cup D_i$ and of a subset of codimension at least 2 contained in ${\rm Sing}(V)$. The multiplicity $m(p, Y/V)$ is constant on a dense open subset of each $D_i$. This common value is called the 
multiplicity of the Seifert $\bC^*$-bundle over $D_i$, and is denoted by $m_i=m(D_i)$. The $\bQ$-divisor $\Delta:=\sum_{i} (1-\frac{1}{m_i})D_i$ is called the {\it branch divisor} of $f: Y\rightarrow V$. We will often 
write $f: Y\rightarrow (V, \Delta)$ to indicate the branch divisor. 

Koll\'{a}r \cite{Kol04a} classified general Seifert $\bC^*$-bundles over $V$, generalizing the previous special cases of Dolgachev-Pinkham-Demazure.
\begin{defn}[see \cite{Kol04a}]
Let $V$ be a normal variety, $L$ a rank 1 reflexive sheaf on $V$, $D_i$ distinct irreducible divisors and $0<s_i<1$ rational numbers. Define:
\begin{eqnarray*}
S(L, \sum_i s_i D_i)&:=&\sum_{j\in \bZ}L^{[j]}\left(\sum_i \lfloor j s_i\rfloor D_i \right)\\
Y(L, \sum_i s_i D_i)&:=&{\rm Spec}_V S(L, \sum_i s_i D_i).
\end{eqnarray*}
where $L^{[k]}=(L^{\otimes k})^{**}$ for the rank 1 reflexive sheaf $L$ (see \cite{Kol04a}).
\end{defn}
There is a natural $\bC^*$-action on $S(L, \sum_i s_i D_i)$ where $L^{[j]}\left(\sum_i s_i D_i\right)$ is the $\lambda^j$ eigensubsheaf. This induces a $\bC^*$-action on 
$Y(L, \sum_i s_i D_i)$. 
\begin{thm}[Koll\'{a}r]
Let $V$ be a normal variety. 
\begin{enumerate}
\item
Every Seifert $\bC^*$-bundle $f: Y\rightarrow V$ can be uniquely written as $Y\cong Y(L, \sum_i s_i D_i)$ for some $L$ and $\sum_i s_i D_i$. 
\item
$f: Y(L, \sum_i s_i D_i)\rightarrow V$ is a Seifert $\bC^*$-bundle if and only if $L^{[M]}(\sum_i (M s_i)D_i)$ is locally free for some $M>0$ and $M s_i$
is an integer for every $i$.
\end{enumerate}
\end{thm}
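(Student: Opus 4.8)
The plan is to reconstruct $Y$ from its $\bC^*$-action through the eigensheaf decomposition of $f_*\cO_Y$, and then to pin down each eigensheaf by a local computation along the branch divisors. Since $f$ is Stein, hence relatively affine, and $\bC^*$-equivariant, I would begin by writing $Y=\Spec_V\cA$ with $\cA:=f_*\cO_Y=\bigoplus_{j\in\bZ}\cA_j$, where $\cA_j$ denotes the weight-$j$ eigensheaf and the multiplication respects the grading, $\cA_j\cdot\cA_k\subseteq\cA_{j+k}$. Over the open locus $V^\circ=V\setminus({\rm Sing}(V)\cup\bigcup_i D_i)$ the map $f$ is a genuine principal $\bC^*$-bundle, so there $\cA_j=(\cA_1)^{\otimes j}$; since $Y$ is normal, each $\cA_j$ is rank-one reflexive and therefore equals the reflexive hull of its restriction to $V^\circ$. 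Setting $L:=\cA_1$ (note $\cA_1=L$ on the nose, because $0<s_i<1$ forces $\lfloor s_i\rfloor=0$) identifies the rank-one reflexive sheaf in the statement.

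The second step is the local model along a branch divisor $D_i$ of multiplicity $m_i$. Near a general point of $D_i$ the requirement that the reduced fiber be $\bC^*/\mu_{m_i}$ means that $Y$ is \'etale-locally $(\bC^*_\lambda\times\bC_w)/\mu_{m_i}$, with $\mu_{m_i}$ acting by $\epsilon\cdot(\lambda,w)=(\epsilon\lambda,\epsilon^{-a_i}w)$ for a unique $a_i$ with $0<a_i<m_i$ and $\gcd(a_i,m_i)=1$. Writing $s_i:=a_i/m_i\in(0,1)$ and computing which monomials $\lambda^j w^k$ are $\mu_{m_i}$-invariant, one finds that the weight-$j$ eigenspace is generated by the invariant monomial of smallest admissible $w$-order, and the outcome is exactly $\cA_j=L^{[j]}(\lfloor j s_i\rfloor D_i)$ in a neighborhood of $D_i$. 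Patching over all the $D_i$ and taking reflexive hulls yields $\cA_j=L^{[j]}(\sum_i\lfloor j s_i\rfloor D_i)$, i.e. $Y\cong Y(L,\sum_i s_iD_i)$. Uniqueness is then immediate: $L=\cA_1$ is intrinsic to $Y$, and each $s_i\in(0,1)$ is recovered from the jumps of $j\mapsto\lfloor j s_i\rfloor$ (equivalently from the local Seifert invariant $a_i/m_i$).

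For part (2), starting from abstract data $(L,\sum_i s_iD_i)$, I would first verify that $\cA:=S(L,\sum_i s_iD_i)=\bigoplus_j L^{[j]}(\sum_i\lfloor j s_i\rfloor D_i)$ is a sheaf of $\cO_V$-algebras: the inclusions $\cA_j\cdot\cA_k\subseteq\cA_{j+k}$ follow from superadditivity of the floor, $\lfloor j s_i\rfloor+\lfloor k s_i\rfloor\le\lfloor(j+k)s_i\rfloor$, together with the reflexive multiplication $L^{[j]}\otimes L^{[k]}\to L^{[j+k]}$. Then $Y:=\Spec_V\cA$ carries the grading $\bC^*$-action and $f$ is affine and equivariant by construction; normality of $Y$ follows from reflexivity of the $\cA_j$ via Serre's criterion, and the local model above shows the reduced fibers over $D_i$ are $\bC^*/\mu_{m_i}$. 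The local-freeness hypothesis enters precisely to force $\cA$ to be a \emph{finitely generated} $\cO_V$-algebra, so that $Y$ is of finite type: if $L^{[M]}(\sum_i(Ms_i)D_i)$ is invertible with all $Ms_i\in\bZ$, then $\cA_M$ is a line bundle, $\cA_{M}^{\pm1}$ is locally free, and $\cA$ is generated over $\cO_V$ by $\cA_0,\dots,\cA_{M-1}$ together with $\cA_M^{\pm1}$. The converse implication is exactly what part (1) provides, since it produces such an $M$ (for instance $M=\mathrm{lcm}_i m_i$) for any genuine Seifert bundle.

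The step I expect to be the main obstacle is the local analysis along the $D_i$: fixing the normal form $(\bC^*\times\bC)/\mu_{m_i}$ correctly and honestly extracting the exponent $\lfloor j s_i\rfloor$ from the $\mu_{m_i}$-invariance, since this single computation simultaneously produces the reflexive twist $\sum_i\lfloor j s_i\rfloor D_i$ and the precise Seifert invariant $s_i$, and it is also what makes the $\Leftarrow$ direction of part (2) yield the correct fiber type. The remaining ingredients---reflexivity forcing the global eigensheaf formula, superadditivity of the floor giving the algebra structure, and the local-freeness condition controlling finite generation---are then essentially formal.
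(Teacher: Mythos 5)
You should first be aware that the paper contains no proof of this statement: it is quoted verbatim from Koll\'ar's paper [Kol04a] as background for the appendix on Seifert $\bC^*$-bundles, so there is nothing in-paper to compare against. Your reconstruction does follow Koll\'ar's original strategy --- decompose $f_*\cO_Y=\bigoplus_j\cA_j$ into $\bC^*$-eigensheaves, note that each $\cA_j$ is rank-one reflexive and agrees with $L^{[j]}$ where $f$ is a principal bundle, and extract the twist $\lfloor js_i\rfloor D_i$ from the local model at a general point of $D_i$ --- and the outline of both parts is sound. One convention-level slip you yourself anticipated: with the action $\epsilon\cdot(\lambda,w)=(\epsilon\lambda,\epsilon^{-a_i}w)$, the invariant monomials of weight $j$ are the $\lambda^jw^k$ with $a_ik\equiv j\ (\mathrm{mod}\ m_i)$, so $\cA_j$ is generated by $\lambda^jw^{k_0}$ with $k_0=(a_i^{-1}j\bmod m_i)$; comparing with the frame $\lambda w^{a_i^{-1}\bmod m_i}$ of $L=\cA_1$ gives $s_i=(a_i^{-1}\bmod m_i)/m_i$ rather than $a_i/m_i$. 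This is harmless but must be tracked for the uniqueness claim in (1). (Likewise, ``reflexive hull of the restriction to $V^\circ$'' is not literally meaningful, since $V\setminus V^\circ$ contains the divisors $D_i$ and so has codimension one; what you actually use --- determination of a reflexive sheaf off a codimension-two set, after the generic points of the $D_i$ are handled --- is correct.)

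The one genuine gap is the necessity direction of part (2). You assert that part (1) ``produces such an $M$ (for instance $M=\mathrm{lcm}_i m_i$).'' Part (1) only identifies the eigensheaves; it does not show that $\cA_M=L^{[M]}(\sum_i(Ms_i)D_i)$ is locally free, and your local analysis establishes this only on $V^\circ$ and at general points of the $D_i$ --- that is, outside a codimension-two set, where any rank-one reflexive sheaf on a normal variety is already locally free. The whole content of Koll\'ar's condition is local freeness at the remaining points (singular points of $V$, intersections of the $D_i$), and over such a point the Seifert multiplicity $m(p)$ can strictly exceed $\mathrm{lcm}_i m_i$, so that particular $M$ need not work. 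To close the argument one must show, from the slice description of $Y$ near an arbitrary reduced fiber $\bC^*/\mu_{m(p)}$, that $\cA_{m(p)}$ is invertible near $p$, and then take $M$ divisible by all the finitely many values $m(p)$. Without this the ``only if'' half of (2) is unproved.
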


\begin{defn}
The {\it Chern class} of $f: Y\rightarrow (V, \Delta)$ is defined as 
\[
c_1(Y/V):=L+\sum_i s_i [D_i]\in H^2(V, \bQ).
\]
\end{defn}

Seifert $\bC^*$-bundles can be compactified by adding the zero and infinite sections. Their union gives a proper morphism $\bar{f}: \bar{Y}\rightarrow V$ which is a $\bP^1$-bundle over
the set where $m(p)=1$. We have the zero section $V_0\subset \bar{Y}$ and the infinity section $V_\infty\subset \bar{Y}$. The zero section $V_0\subset \bar{Y}$ can be contracted to a point
$\{o\}$ if and only if $c_1(Y/V)$ is positive. In this case, we will denote by $\ocC:=\ocC_{\orb}=\ocC(Y/V)$ the projective variety obtained from $\bar{Y}$ by contracting $V_0$ and call it the orbifold projective cone. We will also denote by 
$\ocC^{\circ}$ the punctured projective cone $\ocC\setminus\{o\}$ and still by $\bar{f}: \ocC^{\circ}\rightarrow V$ the restriction of $\bar{f}$.

\begin{lem}[{\cite[40-42]{Kol04a}}]\label{lemKol}
Suppose $c_1(Y/V)$ is positive and denote by $\ocC$ the orbifold projective cone over $V$.
Then $K_\ocC$ is $\bQ$-Cartier if and only if $c_1(Y/V)=-r^{-1}(K_V+\Delta)$ with $r\in \bQ_{>0}$. In this case $\ocC$ has klt singularities and as $\bQ$-divisors on $\ocC$, 
\begin{equation}\label{ccprojcn}
K_{\ocC}\sim_{\bQ} -(1+r)V_\infty.
\end{equation}

\end{lem}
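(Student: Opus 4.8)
This is a structural lemma about orbifold projective cones attributed to Kollár, and the strategy is to reduce everything to a local computation of the canonical class on the punctured cone $\ocC^\circ$ and then analyze what happens under the contraction $\bar{Y} \to \ocC$ of the zero section $V_0$. The plan is to proceed in three stages: first establish the adjunction-type formula relating $K_{\bar{Y}}$ to $K_V$ on the $\bP^1$-bundle compactification; second, determine the precise condition under which the contraction of $V_0$ yields a $\bQ$-Cartier canonical class; third, read off the numerical relation \eqref{ccprojcn} and verify the klt property.

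\textbf{Stage one: relative canonical class on the compactified Seifert bundle.} First I would compute $K_{\bar{Y}/V}$ for the compactification $\bar{f}: \bar{Y} \to V$. Away from the multiple fibers (where $m(p)=1$) this is an honest $\bP^1$-bundle, and relative adjunction gives $K_{\bar{Y}/V} = -V_0 - V_\infty + \bar{f}^*(\text{twist})$, where the twist is controlled by $c_1(Y/V) = L + \sum_i s_i[D_i]$. Over the multiple fibers the branch divisor $\Delta = \sum_i (1-\tfrac{1}{m_i})D_i$ enters through the orbifold structure: pulling back to a local orbifold chart $W_i \to W_i/\mu_m$, the map is genuinely a line bundle and the ramification of the quotient contributes exactly the coefficients $1-\tfrac{1}{m_i}$. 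This is the standard Seifert-bundle adjunction and I would extract from it a formula of the shape $K_{\bar{Y}} \sim_{\bQ} \bar{f}^*(K_V + \Delta) - V_0 - V_\infty + \bar{f}^*(c_1(Y/V))$, matching multiplicities against the local eigensheaf decomposition $S(L,\sum_i s_i D_i) = \bigoplus_j L^{[j]}(\sum_i \lfloor j s_i\rfloor D_i)$.

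\textbf{Stage two: the contraction and the $\bQ$-Cartier criterion.} The contraction $\bar{Y} \to \ocC$ collapses $V_0$, and the key point is that $K_{\ocC}$ is $\bQ$-Cartier precisely when the discrepancy data along $V_0$ becomes proportional to $V_0$ in a way that descends. Writing $K_{\bar{Y}} \sim_{\bQ} \bar{f}^*(K_V+\Delta) + (\text{coeff})V_0 + (\text{coeff})V_\infty$ and imposing that the class restricts compatibly so that the pullback of a $\bQ$-Cartier $K_{\ocC}$ agrees with $K_{\bar{Y}}$ plus a multiple of the contracted divisor, one finds that $\bQ$-Cartierness forces $c_1(Y/V)$ to be a rational multiple of $-(K_V+\Delta)$, i.e. $c_1(Y/V) = -r^{-1}(K_V+\Delta)$ for some $r \in \bQ_{>0}$ (here positivity of $c_1(Y/V)$ and anti-ampleness of $K_V+\Delta$ are consistent, giving $r>0$). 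I expect \textbf{this stage to be the main obstacle}, since it requires carefully tracking how the reflexive twists $L^{[j]}$ interact with the contraction and verifying that the numerical proportionality is both necessary and sufficient rather than merely necessary; the subtlety is that $\ocC$ is singular at $o$ and one must argue at the level of reflexive sheaves / Weil-divisor classes, not just Cartier divisors.

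\textbf{Stage three: the numerical formula and klt.} Granting $c_1(Y/V) = -r^{-1}(K_V+\Delta)$, the relation \eqref{ccprojcn}, namely $K_{\ocC} \sim_{\bQ} -(1+r)V_\infty$, should drop out by substituting the proportionality into the Stage-one formula and pushing forward under the contraction (the $V_0$ term disappears and the coefficient of $V_\infty$ collects to $-(1+r)$, consistent with the factor seen already in \eqref{logac} and \eqref{ccprojcn}). Finally, the klt property of $\ocC$ follows from the klt (indeed orbifold-smooth) property of $(V,\Delta)$ together with the general fact (as in \cite[Lemma 3.1]{Kol13}) that cone constructions over klt log-Fano bases with a suitable positivity condition on the polarization preserve kltness; concretely one checks discrepancies on a log resolution compatible with $\bar{f}$, where the exceptional divisor over $o$ is the contracted $V_0 \cong V$ and its log discrepancy is computed to be positive exactly because $r>0$. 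I would reference \cite[40-42]{Kol04a} for the detailed multiplicity bookkeeping and present Stages one and three as routine adjunction and discrepancy computations, reserving the written argument for the $\bQ$-Cartier criterion in Stage two.
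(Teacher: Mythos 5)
Your overall plan follows the same route as the paper: both arguments rest on the Seifert-bundle adjunction obtained from the fibrewise form $dt/t$, both defer the class-group bookkeeping behind the $\bQ$-Cartier criterion to Koll\'ar (via the formula $K_Y=f^*(K_V+\Delta)$ and the Flenner--Zaidenberg description of $\mathrm{Cl}$ of a Seifert bundle), and both get the klt statement from quotient singularities away from the vertex together with the argument of \cite[Lemma 3.1]{Kol13} at the vertex. The only cosmetic difference is that the paper works directly on the punctured cone $\ocC^\circ=\ocC\setminus\{o\}$, where $dt/t$ has a single pole along $V_\infty$ and the resulting identity $K_{\ocC^\circ}(V_\infty)=\bar{f}^*(K_V+\Delta)$ extends over the codimension-$\ge 2$ vertex for free, whereas you work on the compactification $\bar{Y}$ and then contract $V_0$; both are fine, and your contraction picture has the side benefit of exhibiting the log discrepancy $A_{\ocC}(\ord_{V_0})=r>0$ explicitly.

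There is, however, one concrete error you should fix: your Stage-one formula
$K_{\bar{Y}}\sim_{\bQ}\bar{f}^*(K_V+\Delta)-V_0-V_\infty+\bar{f}^*(c_1(Y/V))$
carries a spurious $\bar{f}^*(c_1(Y/V))$ term. The form $\bar{f}^*\omega_V\wedge(dt/t)$ has simple poles along \emph{both} sections and nothing else, so the correct relation is $K_{\bar{Y}}\sim_{\bQ}\bar{f}^*(K_V+\Delta)-V_0-V_\infty$ with no twist. Since on $\ocC$ one has $V_\infty\sim_{\bQ}\bar{f}^*c_1(Y/V)=-r^{-1}\bar{f}^*(K_V+\Delta)$, your version would push forward to $K_{\ocC}\sim_{\bQ}-rV_\infty$ rather than $-(1+r)V_\infty$, i.e.\ it is off by exactly the term you inserted. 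With the corrected adjunction formula, Stage three gives $K_{\ocC}\sim_{\bQ}\bar{f}^*(K_V+\Delta)-V_\infty=-rV_\infty-V_\infty=-(1+r)V_\infty$ as claimed, and the rest of your outline goes through.
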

\begin{proof}
As pointed out in \cite[42]{Kol04a}, the first statement follows from the formula for canonical divisor of $Y$: 
$K_Y=f^*(K_V+\Delta)$ and the description of the class group of Seifert bundles by Flenner-Zaidenberg.
If $t$ is a coordinate on each $\bC^*$-fibre, then $dt/t$ is a well defined 1-form on $\ocC^\circ$ with pole order 1 along $V_\infty$. Using this we see that
$K_{\ocC^\circ}(V_\infty)=\bar{f}^*(K_V+\Delta)$. So \eqref{ccprojcn} follows from the fact that, as $\bQ$-divisors, $V_\infty\sim_\bQ \bar{f}^* c_1(Y/V)$. 
$\ocC$ has quotient singularities and hence klt singularities near $V_\infty$. On the other hand, using the same argument as in \cite[Proof of Lemma 3.1]{Kol13}, we know that $\ocC$ has klt singularities at the vertex. 

\end{proof}

\begin{defn}
An orbifold is a normal, compact complex space $V$ covered by local charts given as quotients of smooth coordinate charts. In other words, $V$
can be covered by open charts $V=\cup_i U_i$, and for each $U_i$ there is a smooth complex space $W_i$ and a finite group $G_i$ acting on $W_i$ such that 
$U_i$ is biholomorphic to the quotient space $W_i/G_i$. The quotient maps will be denoted by $\pi_i: W_i\rightarrow U_i$.

One needs to assume the compatibility condition between the charts: there are global divisors $D_j\subset X$ and ramification indices $m_j$ such that
$D_{ij}=U_i\cap D_j$ and $m_{ij}=m_j$ (after suitable re-indexing). The branch divisor of the orbifold is defined as $\Delta:=\sum_i (1-\frac{1}{m_j})D_j$ and the orbifold canonical
class is defined as $K_V+\sum_i (1-\frac{1}{m_i})D_i$. This is the negative of the orbifold Chern class:
\[
c_1^{\orb}(V, \Delta):=c_1(X)-\sum_i (1-\frac{1}{m_i})[D_i]\in H^2(V, \bQ).
\]

\end{defn}

Let $f: Y\rightarrow (V, \Delta)$ be a Seifert $\bC^*$-bundle with $Y$ smooth. For $p\in V$ pick any 
$y\in f^{-1}(p)$ and a $\mu_m$-invariant smooth hypersurface $W_p\subset Y$ transversal to ${\rm red} f^{-1}(p)$ for $m=m(p, Y/V)$. Then $\{\pi_p: W_p\rightarrow U_p:=W_p/\mu_m\}$ gives
an orbifold structure on $V$. The orbifold branch divisor coincides with the branch divisor of the Seifert bundle. The orbifolds coming from a smooth Seifert bundle have additional property that each 
$U_p$ is a quotient by a cyclic group $\mu_m$. Such an orbifold is called {\it locally cyclic}.

It's useful to use the local structure of near $V_\infty$. We can choose an open set $U=W/\mu_m$ such that 
$\bar{f}^{-1}(U)\cong \bC\times \bC^{n-1}/\mu_m(p, a_1, \cdots, a_{n-1})$ such that ${\rm gcd}(a_1, \dots, a_{n-1}, m)=1$. Following Koll\'{a}r, we define the integers:
\[
c_i:={\rm gcd}(a_1, \dots, \widehat{a_i}, \dots, a_{n-1}, m), \quad d_i:=a_i c_i/C, \quad C:=\prod_i c_i.
\]
Then $c_i$ are pairwise relatively prime and $C/c_i$ divides $a_i$. Moreover as a variety:
\[
V=A^{n-1}_{\bf z}/\mu_m(a_1, \dots, a_{n-1})\cong A^n_{\bf x}/\mu_{m/C}(d_1, \dots, d_n).
\]
Denote $M=m/C$ and define the $\bQ$-divisors:
\[
D_i: (x_i=0)/\mu_M(d_1, \dots, \widehat{d_i}, \dots, d_{n-1})\subset V.
\]
and let
\[
\cO_V(j):=\epsilon^j\text{-eigenspace of } \bC[x_1, \dots, x_{n-1}] 
\]
as an $\cO_X$-module. Then $\cO_V(D_j)\cong \cO_V(d_j)$. Moreover, since $\mu_M$ acts on $A^n_{\bf x}$ without pseudo-reflections, $K_{A^n_{\bf x}}=\pi^* K_V$. So $K_{V}=\cO_V(-\sum d_i)=\cO_V(-\sum D_i)$ and
\[
K_{\orb}=K_V+\sum_i (1-\frac{1}{c_i})D_i=-\sum_i \frac{D_i}{c_i}.
\]
so that
\[
C\cdot K_{\orb}=-\sum_i \frac{C}{c_i}D_i=-\sum_i \frac{a_i }{d_i}D_i=\cO_V(-\sum_i a_i).
\]
One can write $p$ uniquely as $p\equiv l C+\sum a_i b_i$ mod $M$ where $0\le b_i<c_i$ for every $i$ and 
\[
C\cdot c_1(Y/X)=\cO_V\left(Cl+\sum_i \frac{C}{c_i} b_i d_i\right)=\cO_V(Cl+\sum a_i b_i)=\cO_V(p).
\] 
If $c_1(Y/X)=-r^{-1}K_\orb$, then $p=r^{-1}\sum_i a_i$ and locally
\[
\bar{f}^{-1}(U)=U\times (r^{-1} K_U^{-1}) /\mu_m. 
\]
\begin{defn}
An orbifold Hermitian metric $g$ on the orbifold $(V, \Delta)$ is a Hermitian metric $g$ on $V\setminus ({\rm Sing V}\cup {\rm Supp} \Delta)$ such that for every chart $\pi_i: W_i\rightarrow U_i$ the pull back $\pi_i^* g$ extends to a Hermitian metric on $W_i$. Similarly, geometric objects on complex manifolds can be extended in a straight forward manner to the orbifold setting by working with local uniformizing charts. For example one can talk about curvature, K\"{a}hler metrics, K\"{a}hler-Einstein metrics on orbifolds. 
\end{defn}

\section{Proof of main theorems}
\subsection{Log version of Fujita's result}\label{seclogFujita}

Following \cite{Fuj15}, we will first translate Berman's expansion of log-Ding-energy into an algebraic version. 
\begin{prop}[{\cite[Proposition 3.5]{Fuj15}}]\label{logBermAlg}
Let $(X, D)$ be a $n$-dimensional log-$\bQ$-Fano pair which is log-Ding-semistable. Let $\delta$ be a positive rational number such that
$-\delta^{-1}(K_X+D)$ is Cartier. Let $I_M\subset\cdots\subset I_1\subset \cO_X$ be a sequence of coherent ideal sheaves and let 
$\cI:=I_M+I_{M-1}t^1+\cdots+I_1 t^{M-1}+(t^M)\subset \cO_{X\times\bC^1}$. 
Let $\Pi: \cX\rightarrow X\times \bC^1$ be the blowup along
$\cI$. Denote $\cD:=\Pi^*(D\times\bC^1)$.
Let $E\subset \cX$ be the Cartier divisor defined by $\cO_X(-E)=\cI\cdot \cO_\cX$, and let
\[
\cL:=\Pi^*\cO_{X\times\bC^1}(-\delta^{-1}\left(K_{X\times\bC^1/\bC^1}+
D\times\bC^1)\right)\otimes\cO_{\cX}(-E).
\] 
Assume that $\cL$ is semiample over $\bC^1$. Then $(\cX,\cD;\cL)$ is naturally
seen as a (possibly non-normal) semi test configuration of $(X,D;-\delta^{-1}
(K_X+D))$.
Under these conditions, $((X\times\bC^1, D\times\bC^1); \;
\cI^\delta \cdot (t)^{d})$ 
must be sub log canonical, where
\[
d:=1+\frac{\delta^{n+1}(\bar{\cL}^{n+1})}{(n+1)((-K_X-D)^n)}.
\]
Moreover, we have the equality:
\[
(\bar{\cL}^{n+1})=-\lim_{k\rightarrow+\infty}\frac{\dim\left(\frac{H^0(X\times\bC^1, \cO_{X\times\bC^1}(-k\delta^{-1} (K_{X\times\bC^1/\bC^1}+D\times\bC^1))}{H^0(X\times\bC^1, \cO_{X\times\bC^1}(-k\delta^{-1}(K_{X\times\bC^1/\bC^1}+D\times\bC^1))\cdot \cI^k)}\right)}{k^{n+1}/(n+1)!}.
\]
\end{prop}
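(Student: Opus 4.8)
The plan is to follow Fujita's proof of the absolute statement \cite[Proposition 3.5]{Fuj15} and to carry every computation through in the logarithmic category, with the log-Ding invariant $\lDing$ in place of the Ding invariant. First I would check that $(\cX,\cD;\cL)$ really is a semi-test configuration of $(X,D;-\delta^{-1}(K_X+D))$: the $\bC^*$-action is the one induced on $\cX=\Spec$ of the extended Rees-type algebra defining $\cI$ by the weight on the variable $t$; over $t\neq 0$ the ideal $\cI$ restricts to the unit ideal, so $\Pi$ is an isomorphism there and the required equivariant triviality over $\bC^*$ holds; flatness over $\bC^1$ is automatic from the blow-up construction, and $\pi$-semiampleness of $\cL$ is assumed. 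This exhibits $(\cX,\cD;\cL)$ as an (a priori non-normal, merely $\pi$-semiample) semi-test configuration.

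The core of the argument is to evaluate $\lDing$ for this semi-test configuration and to invoke log-Ding-semistability. Passing to the natural compactification $(\bar\cX,\bar\cD;\bar\cL)$ over $\bP^1$ and unwinding the definition, one finds $\lDing(\cX,\cD;\cL)=\lct(\cX,\cD-\Delta_{(\cX,\cD;\cL)};\cX_0)-d$, with $d$ exactly as in the statement; here $\Delta_{(\cX,\cD;\cL)}$ is read off from $\cL=\Pi^*\cO(-\delta^{-1}(K_X+D))\otimes\cO_\cX(-E)$, namely $\delta^{-1}\Delta_{(\cX,\cD;\cL)}=\delta^{-1}K_{\cX/(X\times\bC^1)}-E$, supported on $\cX_0$ because the center of $\Pi$ lies over $X\times\{0\}$. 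A change of variables through $\Pi$ — the usual computation of the log canonical threshold of $\cI^\delta\cdot(t)^d$ on $X\times\bC^1$ by means of a log resolution factoring through the blow-up of $\cI$ — then identifies sub log canonicity of $((X\times\bC^1,D\times\bC^1);\cI^\delta\cdot(t)^d)$ with the inequality $\lct(\cX,\cD-\Delta_{(\cX,\cD;\cL)};\cX_0)\ge d$. Since $(X,D)$ is log-Ding-semistable we have $\lDing\ge 0$, which is precisely that inequality, so the desired pair is sub log canonical.

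The one genuinely delicate point, where I expect to spend the most care, is that log-Ding-semistability is tested against \emph{normal} test configurations, whereas $(\cX,\cD;\cL)$ may be non-normal and $\cL$ only semiample. I would dispose of this exactly as in \cite{Fuj15}: replace $\cX$ by its normalization and pass to the ample model of $\cL$, noting that normalization only increases discrepancies (hence does not decrease the relevant $\lct$) while neither operation alters the intersection number $\bar\cL^{n+1}$, so the inequality obtained on the normal model implies the one we need. Equivalently, one checks that $\lDing$ of a semi-test configuration dominates that of its normalization, which is the form of the reduction used in the absolute case; the presence of the boundary $\cD=\Pi^*(D\times\bC^1)$ enters only through the log-discrepancy bookkeeping and does not change the structure of the argument.

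Finally, the volume identity is an asymptotic Riemann-Roch computation. Since $\bar\cL$ is nef, $\bar\cL^{n+1}=\lim_{k}(n+1)!\,h^0(\bar\cX,k\bar\cL)/k^{n+1}$, and because $\cO_\cX(-kE)=\cI^k\cdot\cO_\cX$ one has $\Pi_{*}\cO_\cX(-kE)=\overline{\cI^k}$, the integral closure. Hence the sections of $k\bar\cL$ are identified with the subsheaf $\cO(-k\delta^{-1}(K_{X\times\bC^1/\bC^1}+D\times\bC^1))\cdot\overline{\cI^k}$ sitting inside $\cO(-k\delta^{-1}(K_{X\times\bC^1/\bC^1}+D\times\bC^1))$, and the difference of the two $H^0$-dimensions is the dimension of the quotient in the statement. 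Tracking the sign through $\bar\cL=\Pi^*(\cdots)-E$ yields $\bar\cL^{n+1}=-\lim_k \dim(\text{quotient})/(k^{n+1}/(n+1)!)$; the gap between $\cI^k$ and its integral closure $\overline{\cI^k}$ affects only lower-order terms and so leaves the leading coefficient unchanged.
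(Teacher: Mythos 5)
Your proposal is correct and follows essentially the same route as the paper, which itself adapts Fujita's argument verbatim: pass to the normalization (and ample model) to get a genuine normal semi-test configuration, observe that $\lDing(\cX^\nu,\cD^\nu;\nu^*\cL)=\lct(\cX^\nu,\cD^\nu-\Delta_{(\cX^\nu,\cD^\nu;\nu^*\cL)};\cX^\nu_0)-d\ge 0$, identify that log canonical threshold with $\lct\bigl((X\times\bC^1,(D\times\bC^1)\cdot\cI^\delta);(t)\bigr)$ via the divisorial identity $\delta^{-1}\Delta=\delta^{-1}K_{\cX^\nu/X\times\bC^1}-E$, and compute $\bar\cL^{n+1}$ by asymptotic Riemann--Roch using $\Pi_*\cO_\cX(-kE)=\overline{\cI^k}$. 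The log boundary enters only through bookkeeping, exactly as you say, so no further comparison is needed.
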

\begin{proof}
The proof is the same as Fujita's proof. Let $\nu: \cX^\nu\rightarrow \cX$  
be the normalization. Denote $\cD^\nu:=\nu^*\cD$. Then $(\cX^\nu,\cD^\nu;
\nu^*\cL)/\bC^1$ is a normal semi test configuration of $(X,D;-\delta^{-1}
(K_X+D))$.
By the log-Ding-semistability of $(X,D)$, we can deduce that 
$\lDing(\cX^\nu,\cD^\nu; \nu^*\cL)\ge 0$. 
Then we notice  the identity:
\begin{eqnarray*}
&&\cO_{\bar{\cX}^\nu}\left(\delta^{-1}\left(K_{\bar{\cX}^\nu/\bP^1}
+\cD^\nu-\Delta_{(\cX^\nu,\cD^\nu; \nu^*\cL)}\right)\right)\cong \nu^*\bar{\cL}^{-1}\\
&=& \nu^*\cO_{\bar{\cX}}\left(\delta^{-1}\left(\Pi^*(K_{X\times\bP^1/\bP^1}+D\times\bP^1)+\delta E\right)\right).
\end{eqnarray*}
So $(\cX^\nu, \cD^\nu-\Delta_{(\cX^\nu,\cD^\nu; \nu^*\cL)}+c\cX^\nu_0)$ is sub log canonical if and only if $((X\times\bC^1, D\times\bC^1);\; \cI^{\delta}\cdot (t)^c)$ is sub log canonical. Thus we have
the identity:
\[
\lct(\cX^\nu, \cD^\nu-\Delta_{(\cX^\nu,\cD^\nu; \nu^*\cL)}; \cX^\nu_0)=
\lct\left((X\times\bC^1, (D\times\bC^1)\cdot \cI^\delta); (t)\right).
\]
The rest of the argument is the same as in \cite{Fuj15} and \cite{Oda}. 
\end{proof}

Assume $(X, D)$ is a log pair such that $L=-\delta^{-1}(K_X+D)$ is an ample Cartier divisor. Denote by $S=\bigoplus_{m=0}^{+\infty} H^0(X, mL)$ the graded $\bC$-algebra of sections of $(X, L)$. Assume $\cF$ is a good filtration of the graded ring $S$ as in Definition \ref{defil}. Following Fujita, define the ideal sheaf:
\[
\cI^{\cF}_{(m,x)}={\rm Im}\left(\cF^x S_m\otimes L^{-m}\rightarrow \cO_{X}\right),
\]
and the ideal sheaf on $X\times\bC$:
\[
\cI_m=\cI^{\cF}_{(m,m e_+)}+\cI^{\cF}_{(m, m e_+-1)}t^1+\cdots+
\cI^{\cF}_{(m,m e_-+1)}t^{m(e_+-e_-)-1}+\left(t^{m(e_+-e_-)}\right).
\]

By \cite[4.3]{Fuj15}, there exists $m_1\in\bZ_{>0}$ such that $\{\cI_m\}_{m\geq m_1}$
is a graded family of coherent ideal sheaves on $X\times \bC^1$. 
For any $m\geq m_1$, let
\begin{itemize}
 \item $\Pi_m:\cX_m\to X\times\bC^1$ be the blow up along $\cI_m$,
 \item $\cD_m:=\Pi_m^*(D\times\bC^1)$,
 \item $E_m\subset\cX_m$ be the Cartier divisor defined by $\cO_{\cX_m}(-E_m)
 =\cI_m\cdot\cO_{\cX_m}$, and
 \item $\cL_m:=\Pi_m^*\cO_{X\times\bC^1}(-m\delta^{-1}\left(K_{X\times\bC^1/\bC^1}+
D\times\bC^1)\right)\otimes\cO_{\cX_m}(-E_m)$.
\end{itemize}
\begin{prop}[{\cite[4.6]{Fuj15}}]
 The line bundle $\cL_m$ is semiample over $\bC^1$. Thus $(\cX_m,\cD_m;\cL_m)/\bC^1$
 is a semi test configuration of $(X,D;L^m)$. 
\end{prop}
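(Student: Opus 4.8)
The plan is to deduce the relative semiampleness of $\cL_m$ from the stronger fact that the twisted ideal $p_1^*(mL)\otimes\cI_m$ is globally generated on $X\times\bC^1$, where $p_1\colon X\times\bC^1\to X$ and $p_2\colon X\times\bC^1\to\bC^1$ are the projections. First note that since $K_{X\times\bC^1/\bC^1}=p_1^*K_X$ and $D\times\bC^1=p_1^*D$, and $L=-\delta^{-1}(K_X+D)$, one may rewrite $\cL_m=\Pi_m^*p_1^*(mL)\otimes\cO_{\cX_m}(-E_m)$. By the universal property of the blow-up, $\Pi_m^*\cI_m$ has image $\cI_m\cdot\cO_{\cX_m}=\cO_{\cX_m}(-E_m)$, so tensoring the surjection $\Pi_m^*\cI_m\twoheadrightarrow\cO_{\cX_m}(-E_m)$ with $\Pi_m^*p_1^*(mL)$ produces a surjection $\Pi_m^*\bigl(p_1^*(mL)\otimes\cI_m\bigr)\twoheadrightarrow\cL_m$. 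Hence if $p_1^*(mL)\otimes\cI_m$ is globally generated on $X\times\bC^1$, so is $\cL_m$ on $\cX_m$ (pull back the generating sections and push them through the surjection); in particular $\cL_m$ is globally generated, hence semiample, relative to $\bC^1$. As $\bC^1$ is affine, it suffices to exhibit global sections of $p_1^*(mL)\otimes\cI_m$ over $X\times\bC^1$ generating it at every point.

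For the global generation I would use the explicit shape $\cI_m=\sum_{j\ge 0}\cI^{\cF}_{(m,\,me_+-j)}\,t^j$, with the convention $\cI^{\cF}_{(m,x)}=\cO_X$ for $x\le me_-$ (so the tail is $(t^{m(e_+-e_-)})$). The decisive observation is that, by the very definition of $\cI^{\cF}_{(m,x)}$ as the image of $\cF^xS_m\otimes L^{-m}\to\cO_X$, the sheaf $mL\otimes\cI^{\cF}_{(m,x)}$ is exactly the subsheaf of $mL$ generated by the sections $\cF^xS_m\subseteq H^0(X,mL)$, and is therefore globally generated by $\cF^xS_m$. Enlarging $m_1$ so that $mL$ is very ample, hence base-point free (giving $\cI^{\cF}_{(m,me_-)}=\cO_X$) for all $m\ge m_1$, I claim the global sections $s\cdot t^{j}$ with $s\in\cF^{\,me_+-j}S_m$ and $j\ge 0$ generate $p_1^*(mL)\otimes\cI_m$. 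At a point $(p,t_0)$ with $t_0\neq 0$ the function $t^{m(e_+-e_-)}$ is a unit, so $\cI_m$ is locally trivial and the sections $s\cdot t^{m(e_+-e_-)}$, $s\in S_m$, already generate $p_1^*(mL)$ there since $mL$ is base-point free. At a point $(p,0)$ the stalk is generated over $\cO_{X\times\bC^1,(p,0)}$ by the stalks of $mL\otimes\cI^{\cF}_{(m,me_+-j)}$ placed in $t$-degree $j$ for $0\le j< m(e_+-e_-)$, together with $\sigma\cdot t^{m(e_+-e_-)}$ for a local frame $\sigma$ of $mL$ (which may be taken in $S_m=\cF^{\,me_-}S_m$); since each $mL\otimes\cI^{\cF}_{(m,me_+-j)}$ is generated by $\cF^{\,me_+-j}S_m$, all these generators have the asserted form $s\cdot t^j$. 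This establishes global generation.

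Finally, the chosen sections are $\bC^*$-semi-invariant for the action combining the trivial action on $X$ with $t\mapsto\lambda t$, which is the action making $\cI_m$ equivariant and the family map $\cX_m\to\bC^1$ (the composite of $\Pi_m$ with $p_2$) a $\bC^*$-equivariant morphism; it is flat because $\cX_m$ is integral and dominates the smooth curve $\bC^1$. Over $\bC^*$ we have already seen $\cI_m=\cO_{X\times\bC^*}$, so $\Pi_m$ is an isomorphism there and $\cL_m$ restricts to $p_1^*(mL)$, identifying $(\cX_m,\cD_m;\cL_m)|_{\bC^*}$ with $(X,D;mL)\times\bC^*$. Combined with the relative semiampleness just proved, this shows $(\cX_m,\cD_m;\cL_m)/\bC^1$ is a semi test configuration of $(X,D;L^m)$. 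The one genuinely delicate point is the stalk analysis over $t=0$: one must check that the generators coming from the different filtration levels $\cF^{\,me_+-j}S_m$, which sit in different $t$-degrees, really assemble to generate the mixed ideal $\cI_m$ itself and not merely its $t$-associated graded. Here the monotonicity $\cI^{\cF}_{(m,x)}\subseteq\cI^{\cF}_{(m,x-1)}$, a consequence of the decreasing property of $\cF$, is what guarantees both that $\cI_m$ is an honest ideal of $\cO_X[t]$ and that finitely many generators, up to $t$-degree $m(e_+-e_-)$, suffice.
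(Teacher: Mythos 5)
The paper offers no proof of this proposition at all, deferring entirely to the citation [Fuj15, 4.6], and your argument is correct and is essentially the one given there: reduce relative semiampleness of $\cL_m$ to global generation of $p_1^*(mL)\otimes\cI_m$, which is checked $t$-degree by $t$-degree using the fact that $mL\otimes\cI^{\cF}_{(m,x)}$ is, by the very definition of $\cI^{\cF}_{(m,x)}$ as an image sheaf, generated by the sections $\cF^{x}S_m$. The only cosmetic caveat is that your identification $S_m=\cF^{me_-}S_m$ need not hold for an arbitrary admissible integer $e_-$, but this is harmless: the $t$-degree $\ge m(e_+-e_-)$ part of $\cI_m$ is all of $\cO_X$ by definition, so every $\sigma\in S_m$ already furnishes a legitimate global section $\sigma\cdot t^{m(e_+-e_-)}$ of $p_1^*(mL)\otimes\cI_m$ there.
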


Thus, by Proposition \ref{logBermAlg}, $((X\times\bC^1, D\times\bC^1); 
\cI_m^{\delta/m}\cdot(t)^{d_m})$ is sub log canonical, where
\[
 d_m:=1+\frac{\delta^{n+1}(\bar{\cL}_m^{n+1})}{(n+1)m^{n+1}((-K_X-D)^n)}.
\]

\begin{prop}[log-Fujita, cf. \cite{Fuj15}]\label{logFujita}
Assume the log pair $(X, D)$ satisfies the following properties:
\begin{itemize}
\item $K_X+D$ is $\bQ$-Cartier; 
\item $\left(X, D\right)$ is a klt log-Fano pair;
\item $(X, D)$ is log-Ding-semistable; 
\item $L=-\delta^{-1} (K_X+D)$ is an ample Cartier divisor;
\item $\cF$ is a good graded filtration  of $S=\bigoplus_{k=0}^{+\infty} H^0(X, k L)$ (see Definition \ref{defil}).
\end{itemize}
Then $((X\times\bC^1, D\times\bC^1); \cI^{\delta}_\bullet\cdot(t)^{d_\infty})$ is sub log canonical,
where
\begin{align*}
 \cI_m= &\cI^{\cF}_{(m,m e_+)}+\cI^{\cF}_{(m, m e_+-1)}t^1+\cdots+
\cI^{\cF}_{(m,m e_-+1)}t^{m(e_+-e_-)-1}+\left(t^{m(e_+-e_-)}\right),\\
d_\infty = & 1-\delta(e_+ - e_-)+\frac{\delta}{(L^n)}
\int_{e_-}^{e_+}\vol\left(\overline{\cF}S^{(t)}\right)dt.
\end{align*}

\end{prop}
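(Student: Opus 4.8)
The plan is to carry out Fujita's limiting argument (the proof of Theorem~\ref{Fujthm}) in the logarithmic setting, starting from the finite-level sub log canonicity already assembled above. For each $m\ge m_1$, Proposition~\ref{logBermAlg} applied to the semi test configuration $(\cX_m,\cD_m;\cL_m)$ tells us that $((X\times\bC^1,D\times\bC^1);\,\cI_m^{\delta/m}\cdot(t)^{d_m})$ is sub log canonical, with
\[
d_m=1+\frac{\delta^{n+1}(\bar{\cL}_m^{n+1})}{(n+1)m^{n+1}((-K_X-D)^n)}.
\]
The proposition then reduces to two tasks: (i) passing sub log canonicity to the limit along the graded family $\{\cI_m\}$, and (ii) identifying $\lim_{m\to\infty}d_m$ with the stated $d_\infty$.

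For step (i) I would use the valuative criterion for sub log canonicity together with the fact, already noted above, that $\{\cI_m\}_{m\ge m_1}$ is a graded family on $X\times\bC^1$. For any divisor $F$ over $X\times\bC^1$, the level-$m$ statement gives
\[
A_{(X\times\bC^1,D\times\bC^1)}(F)\ \ge\ \frac{\delta}{m}\,\ord_F(\cI_m)+d_m\,\ord_F(t).
\]
Because $\cI_m\cdot\cI_{m'}\subseteq\cI_{m+m'}$, the sequence $\ord_F(\cI_m)$ is superadditive, so $\frac{1}{m}\ord_F(\cI_m)$ converges to $\ord_F(\cI_\bullet):=\sup_m\frac{1}{m}\ord_F(\cI_m)$. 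Letting $m\to\infty$ (and using $d_m\to d_\infty$ from step (ii)) yields $A_{(X\times\bC^1,D\times\bC^1)}(F)\ge\delta\,\ord_F(\cI_\bullet)+d_\infty\,\ord_F(t)$ for every $F$; since $F$ is arbitrary this is exactly sub log canonicity of $((X\times\bC^1,D\times\bC^1);\,\cI_\bullet^\delta\cdot(t)^{d_\infty})$. Note that the inequality holds for each \emph{fixed} $F$ at every level, so no interchange of $\inf_F$ with $\lim_m$ is needed.

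The computational heart is step (ii): evaluating $\lim_{m\to\infty}\bar{\cL}_m^{n+1}/m^{n+1}$. I would start from the closed expression for $(\bar{\cL}_m^{n+1})$ in Proposition~\ref{logBermAlg} as a limit over $k$ of the dimension of
\[
H^0\!\bigl(X\times\bC^1,\,-km\delta^{-1}(K_{X\times\bC^1/\bC^1}+D\times\bC^1)\bigr)\Big/\ \bigl(\,\cdot\,\bigr)\cdot\cI_m^{\,k}.
\]
Since $-\delta^{-1}(K_{X\times\bC^1/\bC^1}+D\times\bC^1)$ is the pullback of $L$, the numerator is $H^0(X,kmL)\otimes\bC[t]$, and expanding $\cI_m=\sum_j\cI^{\cF}_{(m,me_+-j)}t^j$ decomposes the quotient dimension, one $t$-degree at a time, into codimensions $\dim H^0(X,kmL)-\dim\overline{\cF}^{x}H^0(X,kmL)$; the \emph{saturated} filtration $\overline{\cF}$ enters precisely because $\cI^{\cF}_{(m,x)}$ cuts out $\overline{\cF}^{x}S_m=H^0(X,L^m\cdot\cI^{\cF}_{(m,x)})$. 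Summing over $t$-degrees and letting first $k\to\infty$ and then $m\to\infty$ converts the sum into a Riemann integral, the ``full'' part contributing $-(L^n)(e_+-e_-)$ and the defect contributing $\int_{e_-}^{e_+}\vol(\overline{\cF}S^{(t)})\,dt$. Combined with $(-K_X-D)^n=\delta^n(L^n)$ this gives
\[
\lim_{m\to\infty}\frac{\delta^{n+1}(\bar{\cL}_m^{n+1})}{(n+1)m^{n+1}((-K_X-D)^n)}=-\delta(e_+-e_-)+\frac{\delta}{(L^n)}\int_{e_-}^{e_+}\vol(\overline{\cF}S^{(t)})\,dt,
\]
which is exactly $d_\infty-1$, completing the proof.

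I expect the main obstacle to be the double limit in step (ii): one must control the $k\to\infty$ asymptotics of the filtered dimensions $\dim\overline{\cF}^{x}H^0(X,kmL)$ uniformly enough across $t$-degrees that, after dividing by $m^{n+1}$, the sum over $j$ genuinely converges to the integral $\int_{e_-}^{e_+}\vol(\overline{\cF}S^{(t)})\,dt$. This is the analytic content of Fujita's original computation and goes through with $L=-\delta^{-1}(K_X+D)$ in place of $-\delta^{-1}K_X$; the logarithmic decorations $D\times\bC^1$ and $\cD_m=\Pi_m^*(D\times\bC^1)$ are pulled back from the base and therefore do not affect the self-intersection $\bar{\cL}_m^{n+1}$ beyond the definition of $L$.
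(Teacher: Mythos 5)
Your proposal takes essentially the same route as the paper, whose proof of this proposition consists of citing \cite[4.7]{Fuj15} for the convergence $\lim_{m\to\infty}d_m=d_\infty$ and \cite[2.5(1)]{Fuj15} for passing sub log canonicity to the limit along the graded family; your steps (ii) and (i) are precisely the content of those two citations, carried out with $L=-\delta^{-1}(K_X+D)$ in place of $-\delta^{-1}K_X$, and the preceding build-up in the paper (the semi test configurations $(\cX_m,\cD_m;\cL_m)$ and Proposition \ref{logBermAlg}) supplies exactly the finite-level input you start from. One small correction that does not affect the argument: since $\cI_m\cdot\cI_{m'}\subseteq\cI_{m+m'}$, the sequence $\ord_F(\cI_m)$ is \emph{subadditive} rather than superadditive, so $\frac{1}{m}\ord_F(\cI_m)$ converges to its \emph{infimum} by Fekete's lemma — only the existence of the limit is used, so your conclusion stands.
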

\begin{proof}
The proof is the same as in \cite{Fuj15}.
Indeed by \cite[4.7]{Fuj15} we have that
$\lim_{m\to\infty}d_m=d_\infty$. Then the proposition follows by 
\cite[2.5(1)]{Fuj15}.
\end{proof}

\subsection{Application of log-Fujita to the filtrations of valuations}\label{secvalfil}
Let $o\in X$ be a closed point. Let $v\in \Val_{X, o}$ be a real valuation centered at $o$. 
We will apply the above log version of Fujita's result to the filtration
associated to $v$ on $X$. 
Consider the following graded filtration of $S$. For an $x\in \bR$ define:
\[
\cF^x S_m=H^0(X, L^m\otimes \fa_x),
\]
where $\fa_x:=\fa_x(v)=\{f\in \cO_X: v(f)\ge x\}$.
\begin{lem}\label{valfil}
$\cF$ is a decreasing,
left-continuous, multiplicative and saturated filtration of $S$. Moreover, if
$A_{(X,D)}(v)<+\infty$ then $\cF$ is also linearly bounded.
\end{lem}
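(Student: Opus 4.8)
The plan is to read off each listed property of $\cF$ directly from the corresponding elementary property of the valuative ideals $\fa_x=\fa_x(v)$, treating the four structural properties (decreasing, left-continuous, multiplicative, saturated) as formal consequences and isolating the Izumi-type estimate for the single genuinely quantitative point, the finiteness of $e_{\max}$.

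First I would dispatch the structural properties. Monotonicity is immediate, since $x\ge x'$ gives $\fa_x\subseteq\fa_{x'}$ and hence $\cF^x S_m\subseteq\cF^{x'}S_m$. For left-continuity I would first record the sheaf-level identity $\fa_x=\bigcap_{x'<x}\fa_{x'}$, valid because $v(f)\ge x$ is equivalent to $v(f)\ge x'$ for all $x'<x$; as the $L^m\otimes\fa_{x'}$ are subsheaves of the fixed sheaf $L^m$ and $H^0$ commutes with intersections of such subsheaves, this yields $\cF^x S_m=\bigcap_{x'<x}\cF^{x'}S_m$. Multiplicativity is a restatement of $v(fg)=v(f)+v(g)$, which gives $\fa_x\cdot\fa_{x'}\subseteq\fa_{x+x'}$, so the product of a section of $L^m\otimes\fa_x$ with one of $L^{m'}\otimes\fa_{x'}$ lands in $L^{m+m'}\otimes\fa_{x+x'}$. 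Saturatedness is in fact automatic from having defined $\cF$ as $H^0$ of a twisted ideal sheaf: since $I^{\cF}_{(m,x)}$ is the image of $\cF^x S_m\otimes L^{-m}\to\cO_X$ and every element of $\cF^x S_m$ is a section of $L^m\otimes\fa_x$, one has $I^{\cF}_{(m,x)}\subseteq\fa_x$, whence $\overline{\cF}^x S_m=H^0(X,L^m\cdot I^{\cF}_{(m,x)})\subseteq H^0(X,L^m\otimes\fa_x)=\cF^x S_m$; together with the always-valid reverse inclusion this gives $\overline{\cF}=\cF$.

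It remains to establish linear boundedness, which I would split into the two bounds. For $e_{\min}$ the key is that $v$ is centered at $o$, so $v\ge 0$ on $\cO_{X,o}$ and $\fa_t=\cO_X$ for every $t\le 0$; hence $\cF^t S_m=S_m$ for $t\le 0$, giving $e_{\min}(S_m,\cF)\ge 0$ and therefore $e_{\min}(\cF)\ge 0>-\infty$ with no extra hypothesis. For $e_{\max}$ I would first rewrite $e_{\max}(S_m,\cF)=\max\{v(s):0\ne s\in S_m\}$, which is legitimate since a local trivialization of $L$ near $o$ makes $v(s)$ well defined (units at $o$ have $v=0$). Here the assumption $A_{(X,D)}(v)<+\infty$ finally enters: by \eqref{pldvsld} it is equivalent to $A_X(v)<+\infty$, and the Izumi-type estimates recalled before the statement then yield a constant $c>0$ with $v\le c\,\ord_{E_0}$, where $E_0$ is the exceptional divisor of the blow-up of $o$. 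Consequently $e_{\max}(S_m,\cF)\le c\,\max\{\ord_{E_0}(s):0\ne s\in S_m\}$, and since the divisorial valuation $\ord_{E_0}$ defines a linearly bounded filtration of the section ring of the ample line bundle $L$ (finiteness of the pseudoeffective threshold, or equivalently the Okounkov-body description), dividing by $m$ and passing to $\limsup$ gives $e_{\max}(\cF)<+\infty$.

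I expect the main obstacle to be exactly this final step, namely extracting the linear comparison $v\le c\,\ord_{E_0}$ from the finiteness of the log discrepancy. Everything else is formal bookkeeping with the valuative ideals $\fa_x$, whereas the bound on $e_{\max}$ is where the quantitative content of Izumi's theorem is genuinely required; in particular it is the only place where the hypothesis $A_{(X,D)}(v)<+\infty$ is used.
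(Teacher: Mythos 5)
Your proposal is correct and follows essentially the same route as the paper: saturation is obtained from the inclusion $I^{\cF}_{(m,x)}\subseteq\fa_x$ forcing $\overline{\cF}^xS_m\subseteq H^0(X,L^{\otimes m}\otimes\fa_x)=\cF^xS_m$, and linear boundedness comes from the Izumi-type comparison $v\le c\,A_{(X,D)}(v)\,\ord_o$ combined with the known finiteness of $e_{\max}$ for the $\ord_o$-filtration (the paper cites \cite{BKMS14} for exactly the Okounkov-body/vanishing-sequence fact you invoke). The only difference is that you spell out the ``clear'' structural properties and the $e_{\min}\ge 0$ bound, which the paper records separately right after the lemma.
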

\begin{proof}
It's clear the $\cF$ is decreasing, left-continuous and multiplicative. To prove that $\cF$ is saturated notice that the homomorphism
\[
\cF^x S_m\otimes_{\bC} L^{\otimes(-m)}\twoheadrightarrow I^{\cF}_{(m,x)}
\]
induces the inclusion $I^{\cF}_{(m,x)}\subset\fa_x$ for any $x\in\bR$. Thus
$\overline{\cF}^x S_m= H^0(X,L^{\otimes m}\cdot I^{\cF}_{(m,x)})\subset \cF^x S_m$.

If $A_{(X,D)}(v)<+\infty$, then by a log version of \cite[Proposition 1.2]{Li15a} 
there exists a constant $c=c(X,o)$ such that $v\le c A_{(X,D)}(v)\ord_o$. So it's easy to see 
that 
\[
e(S_\bullet, \cF)\le c A_{(X,D)}(v)\cdot e_{\max}(S_\bullet, \cF_{\ord_o})<+\infty.
\]
The second inequality follows from the work in \cite{BKMS14}.

\end{proof}

It is clear that $\cF^x S_m= S_m$ for $x\leq 0$. Hence we may choose $e_-=0$.
The graded family of ideal sheaves $\cI_\bullet$ on $X\times\bC^1$ becomes:
\[
\cI_m=\cI^{\cF}_{(m,m e_+)}+\cI^{\cF}_{(m, m e_+-1)}t^1+\cdots+\cI^{\cF}_{(m,1)}t^{m e_+-1}+\left(t^{me_+}\right).
\]
We also have that
\[
 d_\infty=1-\delta e_+ + \frac{\delta}{(L^n)}\int_{0}^{\infty}\vol\left(\overline{\cF}S^{(t)}\right)dt.
\]

The valuation $v$ extends to a $\bC^*$-invariant valuation $\bar{v}$ on
$\bC(X\times\bC)=\bC(X)(t)$, such that for any 
$f=\sum_{k}f_k t^k$ we have:
\[
\bar{v}(f)=\min_{k}\{v(f_k)+k\}.
\]

\begin{prop}\label{logFujval}
If $(X, D)$ is log-Ding-semistable and $L=-\delta^{-1}(K_X+D)$ then
\begin{equation}\label{Avint}
A_{(X,D)}(v)-\frac{\delta}{(L^{n})}\int^{+\infty}_0 \vol\left(\cF S^{(t)}\right)dt\ge 0.
\end{equation}

\end{prop}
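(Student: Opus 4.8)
The plan is to feed the Gauss extension of $v$ into the sub log canonical pair produced by log-Fujita (Proposition \ref{logFujita}) and then read off \eqref{Avint} from the valuative characterization of log canonicity. We may assume $A_{(X,D)}(v)<+\infty$; under this hypothesis Lemma \ref{valfil} guarantees that $\cF$ is a good and saturated filtration, so that Proposition \ref{logFujita} applies. (When $A_{(X,D)}(v)=+\infty$ the left-hand side of \eqref{Avint} is infinite and there is nothing to prove.) Since $\fa_x(v)=\cO_X$ for $x\le 0$ we have $\cF^x S_m=S_m$ for $x\le 0$, so we take $e_-=0$, and saturation gives $\overline{\cF}=\cF$. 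Thus log-Fujita asserts that $((X\times\bC^1,D\times\bC^1);\,\cI_\bullet^{\delta}\cdot(t)^{d_\infty})$ is sub log canonical, with
\[
d_\infty=1-\delta e_++\frac{\delta}{(L^n)}\int_0^{+\infty}\vol\left(\cF S^{(t)}\right)dt.
\]

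By the valuative characterization of log canonical thresholds for graded families of ideals (see \cite{JM10}), this sub log canonicity is equivalent to the inequality
\[
A_{(X\times\bC^1,\,D\times\bC^1)}(w)\ \ge\ \delta\,w(\cI_\bullet)+d_\infty\,w(t)
\]
holding for every real valuation $w$ on $X\times\bC^1$, where $w(\cI_\bullet):=\lim_{m\to\infty}w(\cI_m)/m$. I will test this against $w=\bar v$, the Gauss extension of $v$ to $\bC(X)(t)$ defined by $\bar v\big(\sum_k f_k t^k\big)=\min_k\{v(f_k)+k\}$, which is indeed a real valuation trivial on $\bC$.

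It then remains to evaluate the three quantities attached to $\bar v$. First, $\bar v(t)=1$ directly from the definition. Second, I claim $\bar v(\cI_\bullet)=e_+$: every section of $\cF^{j}S_m=H^0(X,L^m\otimes\fa_j)$ twists down to a function lying in $\fa_j$, so $v(\cI^{\cF}_{(m,j)})\ge j$; consequently each generator $\cI^{\cF}_{(m,me_+-p)}\,t^{p}$ of $\cI_m$ has $\bar v$-value at least $(me_+-p)+p=me_+$, while the generator $t^{me_+}$ realizes $\bar v(t^{me_+})=me_+$. Hence $\bar v(\cI_m)=me_+$ for every $m$, and so $\bar v(\cI_\bullet)=e_+$. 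Third — and this is the one delicate point — one has the Gauss-extension log discrepancy identity
\[
A_{(X\times\bC^1,\,D\times\bC^1)}(\bar v)=A_{(X,D)}(v)+1.
\]
When $v$ is quasimonomial this is immediate, since $\bar v$ is again quasimonomial with one extra weight $1$ in the $t$-direction and $\bar v(D\times\bC^1)=v(D)$; the general case follows by writing $A$ as the supremum over retractions to log smooth models and checking that the Gauss construction is compatible with these retractions.

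Combining the three evaluations with the valuative inequality yields $A_{(X,D)}(v)+1\ge \delta e_++d_\infty$, that is,
\[
A_{(X,D)}(v)\ \ge\ \delta e_++d_\infty-1=\frac{\delta}{(L^n)}\int_0^{+\infty}\vol\left(\cF S^{(t)}\right)dt,
\]
where the two $\delta e_+$ terms cancel upon substituting the formula for $d_\infty$. This is precisely \eqref{Avint}. I expect the main obstacle to be the Gauss-extension identity $A_{(X\times\bC^1,D\times\bC^1)}(\bar v)=A_{(X,D)}(v)+1$ for arbitrary (non-quasimonomial) valuations, which requires the approximation theory for log discrepancies; the remaining steps are a routine bookkeeping of the $t$-adic structure of the ideals $\cI_m$.
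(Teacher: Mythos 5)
Your proof is correct and follows essentially the same route as the paper's: apply log-Fujita to the valuation filtration and test the resulting sub-log-canonicity against the Gauss extension $\bar{v}$, evaluating $\bar{v}(t)=1$, $\bar{v}(\cI_m)=me_+$, and $A_{(X,D)\times\bC^1}(\bar{v})=A_{(X,D)}(v)+1$. The only differences are presentational: where you invoke the valuative characterization of sub-log-canonicity for graded families (\cite{JM10}) and the Gauss-extension log-discrepancy identity for arbitrary valuations as black boxes, the paper derives both by hand --- via an $\epsilon$-perturbed asymptotic multiplier ideal reduction to a single ideal $\cI_m$ together with \cite[1.2]{BFFU13}, and via approximation of $v$ by quasimonomial valuations $v_n$ for which the identity $A_{(X,D)\times\bC^1}(\bar{v}_n)=A_{(X,D)}(v_n)+1$ is immediate.
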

\begin{proof}
We may assume that $A_{(X,D)}(v)<+\infty$, since otherwise the inequality holds
automatically. Hence Lemma \ref{valfil} implies that $\cF$ is good and saturated.

By Proposition \ref{logFujita}, we know that $((X\times\bC^1, D\times\bC^1); \;
\cI_\bullet^\delta\cdot (t)^{d_\infty})$ is sub log canonical.
Since $(X, D)\times\bC^1:=(X\times\bC^1, D\times\bC^1)$ has klt singularities,
for any $0<\epsilon\ll 1$ there exists $m=m(\epsilon)$ such that
\[
 \cO_{X\times\bC^1}\subset\cJ\left((X\times\bC^1, D\times\bC^1);
\cI_m^{(1-\epsilon)\delta/m}\cdot (t)^{(1-\epsilon)d_\infty}\right).
\]
By \cite[1.2]{BFFU13} we know that the following inequality holds for any real
valuation $u$ on $X\times\bC^1$:
\begin{equation}
 A_{(X, D)\times \bC^1}(u)>\frac{(1-\epsilon)\delta}{m}u(\cI_m)+(1-\epsilon)d_\infty u(t).
\end{equation}
Let us choose a sequence of quasi-monomial real valuations $\{v_n\}$ on $X$
such that $v_n\to v$ and $A_{(X,D)}(v_n)\to A_{(X,D)}(v)$ when $n\to\infty$. It is easy
to see that $\{\bar{v}_n\}$ is a sequence of quasi-monimial valuations on
$X\times\bC^1$ satisfying $\bar{v}_n(t)=1$ and 
\[
 A_{(X,D)\times\bC^1}(\bar{v}_n)=A_{(X,D)}(v_n)+1.
\]
Hence we have
\begin{align*}
A_{(X,D)}(v)+1  &= \lim_{n\to\infty} A_{(X,D)}(v_n)+1=\lim_{n\to\infty} A_{(X,D)\times\bC^1}(\bar{v}_n)\\
& \geq \frac{(1-\epsilon)\delta}{m}\lim_{n\to\infty}\bar{v}_n(\cI_m) + (1-\epsilon)d_\infty.
\end{align*}
From the definition of $\cF^x S_m$ we get:
\[
v\left(\cI^{\cF}_{(m,x)}\right)\ge x.
\]
Therefore,
\begin{align*}
 \lim_{n\to\infty}\bar{v}_n(\cI_m) & =\lim_{n\to\infty} \min_{0\leq j\leq re_+}\left\{v_n\left(\cI^{\cF}_{(m,j)}\right)+ me_+ -j\right\}\\
 & =\min_{0\leq j\leq re_+}\left\{\lim_{n\to\infty}v_n\left(\cI^{\cF}_{(m,j)}\right)+ me_+ -j\right\}\\
 & =\min_{0\leq j\leq re_+}\left\{v\left(\cI^{\cF}_{(m,j)}\right)+me_+ - j\right\}\\
 & \geq me_+
\end{align*}
Hence when $\epsilon\to 0+$ we get:
\begin{equation}
 A_{(X,D)}(v)+1\geq \delta e_+ + d_\infty.
\end{equation}
Therefore,
\begin{align*}
 A_{(X,D)}(v)
 & \geq -1 + \delta e_+ + d_\infty\\
 & = \frac{\delta}{(L^{n})}\int^{+\infty}_0 \vol\left(\cF S^{(t)}\right)dt.
\end{align*}
Hence we get the desired inequality.
\end{proof}

\begin{prop}\label{proplb}
If $(X, D)$ is log-Ding-semistable, then for any real valuation $v$ centered at any closed point of $X$, we have the estimate:
\begin{equation}\label{ineqhvol}
\hvol_{(X,D)}(v)\ge  \left(\frac{n}{n+1}\right)^n (-K_X-D)^n.
\end{equation}
\end{prop}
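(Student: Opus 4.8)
The plan is to feed the filtration $\cF^x S_m = H^0(X,\, mL\otimes\fa_x(v))$ attached to $v$ into Proposition \ref{logFujval}, and then estimate the resulting volume integral sharply in terms of $\vol(v)$. First I would dispose of the trivial case $A_{(X,D)}(v)=+\infty$, where $\hvol_{(X,D)}(v)=+\infty$ and \eqref{ineqhvol} is automatic; so assume $A_{(X,D)}(v)<+\infty$. Then Lemma \ref{valfil} shows $\cF$ is good and saturated, so Proposition \ref{logFujval} applies and yields
\[
A_{(X,D)}(v)\ \ge\ \frac{\delta}{(L^n)}\int_0^{+\infty}\vol\left(\cF S^{(t)}\right)\,dt .
\]

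The heart of the argument is a pointwise lower bound for the integrand. For each $m$, the exact sequence $0\to mL\otimes\fa_{mt}\to mL\to mL\otimes(\cO_X/\fa_{mt})\to 0$ shows that $S_m/\cF^{mt}S_m$ embeds into $H^0(X,\, mL\otimes\cO_X/\fa_{mt})$, and since $\fa_{mt}=\fa_{mt}(v)$ is $\fm_o$-primary this last space has dimension $\dim_\bC \cO_{X,o}/\fa_{mt}$. Dividing by $m^n/n!$, letting $m\to+\infty$, and using the definition of $\vol(v)$ with $p=mt\to\infty$, I obtain
\[
(L^n)-\vol\left(\cF S^{(t)}\right)\ \le\ \vol(v)\,t^n, \qquad\text{so}\qquad \vol\left(\cF S^{(t)}\right)\ \ge\ (L^n)-\vol(v)\,t^n .
\]

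Next I would integrate, using also the trivial bound $\vol(\cF S^{(t)})\ge 0$. Writing $t_\ast=\big((L^n)/\vol(v)\big)^{1/n}$ (note $0<\vol(v)<+\infty$ since $v$ is centered at the closed point $o$), the estimate $\vol(\cF S^{(t)})\ge\max\{(L^n)-\vol(v)t^n,\,0\}$ gives
\[
\int_0^{+\infty}\vol\left(\cF S^{(t)}\right)dt\ \ge\ \int_0^{t_\ast}\big((L^n)-\vol(v)t^n\big)\,dt\ =\ \frac{n}{n+1}\cdot\frac{(L^n)^{(n+1)/n}}{\vol(v)^{1/n}} .
\]
Substituting back yields $A_{(X,D)}(v)\,\vol(v)^{1/n}\ge \tfrac{n}{n+1}\,\delta\,(L^n)^{1/n}$; raising to the $n$-th power and using $(-K_X-D)^n=(\delta L)^n=\delta^n(L^n)$ produces exactly \eqref{ineqhvol}.

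The part requiring care is the volume comparison in the second step: one must check that the per-degree bound $\dim_\bC(S_m/\cF^{mt}S_m)\le \dim_\bC\cO_{X,o}/\fa_{mt}$ survives the simultaneous limits $m\to\infty$ and $p=mt\to\infty$, so that the $\limsup$ defining $\vol(\cF S^{(t)})$ matches the limit defining $\vol(v)$. This is also where one sees the estimate is sharp: equality already occurs for $v=\ord_o$ at a smooth point, where a direct count gives $\vol(\cF S^{(t)})=(L^n)(1-t^n)$ and hence the constant $\tfrac{n}{n+1}$. The point worth emphasizing is that this crude pointwise bound, rather than any finer (e.g. Okounkov-body concavity) input, already yields the optimal constant.
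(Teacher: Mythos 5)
Your proposal is correct and follows essentially the same route as the paper's own proof: you feed the valuation filtration into Proposition \ref{logFujval}, derive the pointwise bound $\vol(\cF S^{(t)})\ge (L^n)-\vol(v)t^n$ from the same exact sequence, and integrate over $[0,((L^n)/\vol(v))^{1/n}]$ to extract the constant $\left(\tfrac{n}{n+1}\right)^n$. The extra attention you give to the interchange of limits and the sharpness remark are welcome but do not change the argument.
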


\begin{proof}
We prove this estimate using the method in \cite{Fuj15, Liu16}.
We may assume $A_{(X,D)}(v)<+\infty$, since otherwise the inequality holds automatically.
Since $\cF^{mx} S_m=H^0(X,L^{\otimes m}\cdot\fa_{mx})$, we have the exact sequence
\[
 0\to \cF^{mx}S_m\to H^0(X,L^{\otimes m})\to H^0(X,L^{\otimes m}\otimes(\cO_X/\fa_{mx})).
\]
Hence we have
\[
 \dim \cF^{mx}S_m\geq h^0(X,L^{\otimes m})-\ell(\cO_X/\fa_{mx}).
\]
Dividing by $m^n/n!$ and taking limits as $m\rightarrow+\infty$, we get 
\begin{equation}\label{liuest}
\vol\left(\cF S^{(x)}\right)\ge (L^n)-\vol(v)x^n.
\end{equation}
So we have the estimate:
\begin{align*}
\int^{+\infty}_0\vol\left(\cF S^{(x)}\right)dx &\ge \int_0^{\sqrt[n]{(L^n)/\vol(v)}}
\left((L^n)-\vol(v)x^n\right)dx\\
&=\frac{n}{n+1}(L^n)\cdot \sqrt[n]{(L^n)/\vol(v)}.
\end{align*}
Applying \eqref{logFujval} we get the inequality:
\begin{eqnarray*}
A_{(X,D)}(v)&\ge&\frac{\delta}{(L^{n})}\int^{+\infty}_0\vol\left(\cF S^{(t)}\right)dt\\
&\ge& \frac{\delta}{(L^n)}\frac{n}{n+1} (L^{n})\sqrt[n]{(L^n)/\vol(v)}=\frac{n}{n+1}\sqrt[n]{(\delta L)^n/\vol(v)}.
\end{eqnarray*}
Since $\delta L=-K_X-D$, this is exactly equivalent to
\[
\hvol_{(X,D)}(v)=A_{(X,D)}(v)^n\vol(v)\ge \left(\frac{n}{n+1}\right)^n (-K_X-D)^n.
\]
\end{proof}

\begin{proof}[Proof of Theorem \ref{thmlog}]
By Section \ref{seclogK}, we apply the above proposition to the case $X=\ocC$ and $D=(1-\beta)V_\infty+\cE$. Recall that by \eqref{Lcone} we have: 
\begin{eqnarray*}
-(K_X+D)&=&-(K_X+(1-\beta)V_\infty+\cE)=r\frac{n+1}{n}V_\infty.
\end{eqnarray*}
So we get the intersection number:
\begin{equation}\label{logint}
(-K_X-D)^n=r^n \left(\frac{n+1}{n}\right)^n H^{n-1}.
\end{equation}
Because $v$ is centered at $o\in \cC$, $v(V_\infty)=0$ and hence $A_{(X, D)}(v)=A_{(\ocC, \cE)}(v)-(1-\beta)v(V_\infty)=A_{(\cC, \cE)}(v)$. On the other hand, $A_{(\cC, \cE)}(v_0)=r$ (see \cite[Section 3.1]{Kol13}) and $\vol(v_0)=(H^{n-1})$. So $\hvol_{(\cC, \cE)}(v_0)=r^n H^{n-1}=\hvol(v_0)$. Substituting \eqref{logint} into \eqref{ineqhvol}, we get:
\begin{equation}\label{eqthmlog}
\hvol_{(\cC, \cE)}(v)=A_{(\cC, D)}^n\cdot \vol(v)\ge r^n H^{n-1}=\hvol(v_0)=\hvol_{(\cC, \cE)}(v_0).
\end{equation}

\end{proof}

\begin{proof}[Proof of Theorem \ref{thmsemi}]
Choose $F\in |-mK_V|$ for $m\gg 1$ such that $F$ is an irreducible prime divisor satisfying:
\begin{enumerate}
\item $F$ does not contain the center of $v$;
\item $(V, (1-\beta) F)$ is klt for any $0<\beta<1$.
\end{enumerate}
Consider the effective divisor $E_\kappa=(1-\kappa) F/m$
with $0<\kappa\le 1$. Then $(V, E_\kappa)$ is a log-Fano pair:
\[
-K_V-E_\kappa=-K_V-(1-\kappa)(-K_V)=\kappa (-K_V).
\]
Using Tian's $\alpha$-invariant as in \cite{Bm13, LS12, SW12} (see also \cite{JMR15, BHJ15}), we know that the log-Ding-energy is proper and there exists a conical K\"{a}hler-Einstein potential on $(V, E_\kappa; -K_V)$ for $0<\kappa\ll 1$. 

Now if $V$ specially degenerates to a K\"{a}hler-Einstein variety then its Ding-energy is uniformly bounded from below by \cite[Theorem 4]{Li13} (and hence $V$ is K-semistable).
Notice that the assumption in \cite[Theorem 4]{Li13} is that the generic fiber $V$ is smooth. However the proof works for general $\bQ$-Fano variety $V$. Indeed the key calculation showing the continuity of log-Ding-energy there is generalized later in \cite[Appendix I]{LWX15}.
So by using interpolation argument as in \cite{LS12}, we know that there is a conical K\"{a}hler-Einstein potential on $(V, E_\kappa; -K_V)$ for any $\kappa\in (0,1)$. 

Because $F$ does not contain the center
of $v$, $v(\cI_{\cE_\kappa})=0$ and hence $A_{(X,\cE_\kappa)}(v)=A_X(v)$. On the other hand, $H=-r^{-1}K_V=-(\kappa r)^{-1}(K_V+E_\kappa)$. So by Theorem \ref{thmlog} (see \eqref{eqthmlog}), 
\[
\hvol(v)=\hvol_{(X, \cE_\kappa)}(v)\ge (\kappa r)^n H^{n-1}=\kappa^n \hvol(v_0). 
\]
The conclusion follows by letting $\kappa\rightarrow 1$.

If $(\cC, (1-\beta)V_\infty)$ specially degenerates to a conical K\"{a}hler-Einstein pair, then its log-Ding-energy is uniformly bounded from below by \cite[Theorem 4]{Li13} as above. So the pair is log-Ding-semistable by the proof of Berman's result in Proposition \ref{logBerm}. Hence we can directly apply log-Fujita in Proposition \ref{logFujita} as in the proof Theorem \ref{thmlog}. 
\end{proof}

\begin{proof}[Proof of Theorem \ref{thmorb}]
The first statement is just Proposition \ref{propcKEorb}. For the second statement, we can use the same argument as in the proof of Theorem \ref{thmlog} and Theorem \ref{thmsemi} by applying Proposition \ref{proplb} to the case $X=\ocC_\orb$ and $D=(1-\beta)V_\infty$.
\end{proof}

\subsection{Examples}

\begin{exmp}
Consider the $n$-dimensional $A_{k-1}$ singularity. 
\[
A^n_{k-1}:=\{z_1^2+\cdots+z_n^2+z_{n+1}^{k}=0\}\subset\mathbb{C}^{n+1}. 
\]
$A^n_{k-1}$ is an orbifold affine cone with the orbifold base $(V, \Delta)$ given by the hypersurface in weighted projective space $\{Z_1^2+\cdots+Z_n^2+Z_{n+1}^k=0\}\subset\bP^n(k, \cdots, k, 2)$.
It's easy to see that the following
\begin{itemize}
\item If $k$ is odd, $(V, \Delta)=(\bP^{n-1}, (1-\frac{1}{k})Q'^{n-2})$ where $Q'^{n-1}=\{Z_1^2+\cdots+Z_n^2=0\}\subset\bP^{n-1}$;
\item If $k$ is even, $(V, \Delta)=(Q^{n-1}, (1-\frac{2}{k})Q^{n-2})$ where $Q^{n-1}=\{Z_1^2+\cdots+Z_{n+1}^2=0\}\subset\bP^{n+1}$ and $Q^{n-2}=Q^n\cap \{Z_{n+1}=0\}$.
\end{itemize}
The above two cases are related by using the 2-fold branched covering $\tau: Q^{n-1}\rightarrow (\bP^{n-1}, \frac{1}{2}Q'^{n-2})$ so that 
$K_{Q^{n-1}}=\tau^*\left(K_{\bP^{n-1}}+\frac{1}{2}Q'^{n-2}\right)$ and hence
$K_{Q^{n-1}}+(1-\frac{2}{k})Q^{n-2}=\tau^*(K_{\bP^{n-1}}+(1-\frac{1}{k})Q'^{n-2})$.
By \cite{GMSY07, LS12, Li13}, there is a conical K\"{a}hler-Einstein metric on $(V, \Delta)$ if and only if one of the following conditions hold:
\begin{enumerate}
\item $k=1$ or $2$, and $n$ is any positive integer;
\item $n=2$ and $k$ is any positive integer;
\item $n=3$ and $1\le k\le 3$.
\end{enumerate}
There is a natural $\bC^*$-action given by: $(z_1, \cdots, z_n, z_{n+1})\rightarrow (t^k z_1, \cdots, t^k z_n, t^2 z_{n+1})$. The associated valuation is denoted by $v_0$.
Notice that $(V, \Delta)=\left(A^n_{k-1}-\{0\}\right)/\bC^*$. By Theorem \ref{thmorb}, if any of the above conditions is satisfied, $\hvol(v)$ is globally minimized at the valuation $v_0$ associated to canonical $\bC^*$-action. 

Also by \cite{LS12} and \cite{Li13}, $(V, \Delta)$ is 
log-K-semistable but not log-K-polystable if and only if $(n,k)=(3, 4)$ or $(n, k)=(4, 3)$. Using the orbifold-version Theorem \ref{thmorb}, we can show that in these two cases $\hvol(v)$ is also globally minimized at $v_0$. We will prove this for the case $(n, k)=(3, 4)$ and the argument for the case $(n,k)=(4,3)$ is similar. When $(n,k)=(3,4)$, $(V, \Delta)=(\bQ^2, (1-\frac{2}{4})\bQ^1)\cong (\bP^1\times\bP^1, \frac{1}{2}E)$ where $E$ denotes the diagonal $\bP^1$. The compactified orbifold cone $\ocC_\orb$ is just the natural
compactification of $A^3_2$ inside $\bP^4(1,2,2,2,1)$. In other words, if the latter is given weighted homogeneous coordinates $[Z_0, Z_1, \cdots, Z_4]$ such that $z_i=Z_i/Z_0^2$ for $1\le i\le 3$ and $z_4=Z_4/Z_0$, then we have:
\[
\ocC_\orb=\{Z_1^2+Z_2^2+Z_3^2+Z^4_4=0\}\subset\bP^4(1,2,2,2,1).
\]

By the construction in \cite{LS12, Li13} (see also \cite[Section 5.3]{Li15b}), we know that $(V, \Delta)$ specially degenerates into $(\bP^2(1,1,2), \frac{1}{2}F)=:(\cV_0, \frac{1}{2}F)$, where $F=\{W_2=0\}$ if $[W_0, W_1, W_2]$ are weighted homogeneous coordinates of $\bP(1,1,2)$. This can be realized as a degeneration inside $\bP^3(2, 2, 2, 1)$ where the two pairs are realized as weighted projective hypersurfaces:
\begin{eqnarray*}
\left(\bP^1\times\bP^1, \frac{1}{2}E\right)&=&\{Z_1^2+Z_2^2+Z_3^2+Z_4^4=0\}\subset\bP^3(2,2,2,1);\\
\left(\bP^2(1,1,2), \frac{1}{2}F\right) &=& \{Z_1^2+Z_2^2+Z_3^2=0\}\subset\bP^3(2,2,2,1).
\end{eqnarray*}
Correspondingly, $\ocC_{\orb}$ degenerates to the weighted projective cone over $\bP^2(1,1,2)$ given by:
\[\cX_0:=\{Z_1^2+Z_2^2+Z_3^2=0\} \subset \bP^4(1,2,2,2,1).
\] 
Because $(\bP^2(1,1,2), \frac{1}{2}F)$ has a natural orbifold
K\"{a}hler-Einstein metric by pushing forward the Fubini-Study metric of $\bP^2$ under the 2-fold branched covering map $(\bP^2\rightarrow \bP^2(1,1,2), \frac{1}{2}F)$, by Proposition \ref{propcKEorb}, there is a conical K\"{a}hler-Einstein metric on the pair $(\cX_0, (1-\beta)(\cV_0)_\infty)$ where $(\cV_0)_\infty\cong \cV_0=\bP^2(1,1,2)$. To 
see the cone angle $\beta=r/n$, we notice that
\[
-K_{\cX_0}=4H=4\left(\{Z_0=0\}\cap \cX_0\right)=4(\cV_0)_\infty,
\]
where $H$ is the weighted hyperplane divisor of $\bP^{4}(1,2,2,2,1)$. So we get $r=3$ by comparing with \eqref{ccprojcn}. Alternatively, we can calculate the orbifold canonical class $K_{(\bP^2(1,1,2), \frac{1}{2}F)}=-4H'+H'=-3H'$ where $H'$ is the weighted hyperplane divisor of $\bP^2(1,1,2)$. Notice we indeed have $H'=\frac{1}{2}F=H|_{\bP^2(1,1,2)}$ under the embedding given above (taking into account of the $\bZ_2$-orbifold locus of $\bP^3(2,2,2,1)$). In any case we get $\beta=3/3=1$. So there is actually a K\"{a}hler-Einstein metric on the normal variety $\cX_0$ which is orbifold smooth along $(\cV_0)_\infty$. Now we can apply Theorem \ref{thmorb} to obtain the statement we wanted.

\begin{rem}
For all other cases of $(n,k)$, by the calculations in \cite{Li15a} it was conjectured that $\hvol$ is minimized at the valuation associated to the weight $\left(1, \cdots, 1, \frac{n-2}{n-1}\right)$. A more general question will be studied in a forth coming paper.
\end{rem}

\end{exmp}

\begin{exmp}
Next, we will consider quotient surface singularities. 
\begin{prop}\label{quotsurf}
Let $(X,o)$ be a quotient surface singularity with local analytic model $\bC^2/G$, where $G$ acts freely in codimension 1. Then
\[
\min_{v\in\Val_{X,o}}\hvol(v)= \frac{4}{|G|}.
\]
The minimum is achieved at the pushforward of $\ord_0\in\Val_{\bC^2,0}$.
\end{prop}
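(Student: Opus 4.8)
The plan is to transport the normalized volume across the quasi-\'etale quotient $\pi\colon\bC^2\to X=\bC^2/G$ and reduce everything to the smooth point $0\in\bC^2$. First I would record the value on $\bC^2$: realizing $\bC^2=C(\bP^1,\cO_{\bP^1}(1))$ and using that the Fubini--Study metric is a K\"ahler--Einstein metric on $\bP^1$, Corollary \ref{corKEcone} applies. Here $-K_{\bP^1}=\cO(2)$ and $H=\cO(1)=-\tfrac12K_{\bP^1}$, so $r=2$ and $(H^{n-1})=1$; thus $\hvol_{\bC^2}$ is globally minimized over $(\bC^2,0)$ at $\ord_0=\ord_{\bP^1}$ with minimal value $r^n(H^{n-1})=4$. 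In particular $\hvol_{\bC^2}(w)\ge 4$ for every $w\in\Val_{\bC^2,0}$, with equality at $\ord_0$.

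Since $G$ acts freely in codimension $1$, the degree-$|G|$ map $\pi$ is \'etale in codimension $1$, hence crepant ($K_{\bC^2}=\pi^*K_X$) and $\pi^{-1}(o)=\{0\}$. Given $v\in\Val_{X,o}$ with $A_X(v)<+\infty$, I would pick any extension $w$ of $v$ to $\bC(\bC^2)$; its center lies over $o$, so $w\in\Val_{\bC^2,0}$ and $v=w|_{\bC(X)}$. Two transport identities drive the proof. (i) Crepancy gives $A_{\bC^2}(w)=A_X(v)$: this is the Riemann--Hurwitz formula for log discrepancies with vanishing ramification term, proved first for divisorial $w$ and then extended to all valuations by quasi-monomial approximation and continuity of $A$. (ii) Writing $R=\cO_{\bC^2,0}$ and $R^G=\cO_{X,o}$, one has $\fa_p(v)=\fa_p(w)\cap R^G$, hence $\fa_p(v)R\subseteq\fa_p(w)$; the projection formula for Hilbert--Samuel multiplicities along the finite extension $R^G\subseteq R$ gives $e(\fa_p(v)R)=|G|\cdot e(\fa_p(v))$, and since enlarging an ideal cannot increase multiplicity, $e(\fa_p(w))\le|G|\cdot e(\fa_p(v))$. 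Dividing by $p^n/n!$ and using \eqref{vol=mul}, this yields $\vol_{\bC^2}(w)\le|G|\cdot\vol_X(v)$.

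Combining (i) and (ii) proves the lower bound: for every $v\in\Val_{X,o}$,
\[
4\le\hvol_{\bC^2}(w)=A_{\bC^2}(w)^n\vol_{\bC^2}(w)=A_X(v)^n\vol_{\bC^2}(w)\le |G|\,\hvol_X(v),
\]
so $\hvol_X(v)\ge 4/|G|$. To see that this value is attained, I would exploit that $\ord_0$ is $G$-invariant: then $\fa_p(\ord_0)$ is $G$-stable, taking $G$-invariants is exact, so $R^G/\fa_p(v_*)=(R/\fa_p(\ord_0))^G$ where $v_*=\pi_*\ord_0=\ord_0|_{\bC(X)}$, and averaging over $G$ gives $\dim(R/\fa_p(\ord_0))^G=\tfrac1{|G|}\sum_{g}\tr(g\mid R/\fa_p(\ord_0))$. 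Because $G$ acts freely in codimension $1$, the trace of each $g\ne\mathrm{id}$ is $o(p^n)$, so $\vol_X(v_*)=\tfrac1{|G|}\vol_{\bC^2}(\ord_0)=\tfrac1{|G|}$; together with $A_X(v_*)=A_{\bC^2}(\ord_0)=n=2$ this gives $\hvol_X(v_*)=4/|G|$, realizing the minimum.

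The main obstacle is the volume comparison in its sharp (equality) form: one must show that the non-identity group elements contribute only lower-order terms to the equivariant Hilbert function, which is exactly where the codimension-one freeness of the $G$-action enters. A secondary, more routine difficulty is upgrading the crepant log-discrepancy identity (i) from divisorial valuations to arbitrary real valuations via approximation.
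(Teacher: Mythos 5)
Your argument is sound, but it follows a genuinely different route from the paper for the lower bound. The paper stays downstairs: it compactifies $X$ to $\overline{X}=\bP^2/G$, observes that the quotient of the Fubini--Study metric is a conical K\"ahler--Einstein metric on the pair $(\overline{X},(1-\tfrac{1}{d})D)$ with $D=l_\infty/G$, and then applies the general estimate of Proposition \ref{proplb} directly, computing $\bigl(\bigl(-K_{\overline{X}}-(1-\tfrac1d)D\bigr)^2\bigr)=9/|G|$ via $K_{\bP^2}=\bar\pi^*(K_{\overline{X}}+(1-\tfrac1d)D)$. You instead prove the bound upstairs on $\bC^2$ (where it is the $n^n=4$ bound at a smooth point, available from Corollary \ref{corKEcone} applied to $C(\bP^1,\cO(1))$) and descend it through the degree-$|G|$ quasi-\'etale quotient using $A_{\bC^2}(w)=A_X(v)$ and $\vol_{\bC^2}(w)\le|G|\vol_X(v)$; the latter, via $\fa_p(v)R\subseteq\fa_p(w)$, monotonicity of multiplicity, and $e(\fa_p(v)R)=|G|\,e(\fa_p(v))$, together with \eqref{vol=mul}, is correct. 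Your approach buys a more general principle (comparison of $\hvol$ across quasi-\'etale covers, established in full later in \cite{LX16}) and avoids the conical-KE/log-Fujita machinery on $\overline{X}$; the paper's approach stays entirely within the tools it has already developed and is shorter given those tools. One caveat: you call the extension of the crepant identity $A_{\bC^2}(w)=A_X(v)$ from divisorial to arbitrary valuations ``routine,'' but $A_Y(\cdot)$ is only lower semicontinuous and is defined as a supremum over log-smooth models, so this step (which needs $G$-equivariant models and is really the technical heart of your reduction) deserves a genuine proof rather than a continuity appeal --- note also that for the lower bound you only need the inequality $A_{\bC^2}(w)\le A_X(v)$, which is the easier direction. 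Your attainment computation ($A_X(v_*)=2$ and $\vol_X(v_*)=1/|G|$ by averaging characters and using that no $g\neq\mathrm{id}$ has eigenvalue $1$) is essentially the paper's Lemma \ref{quotmult}.
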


\begin{proof}
 For simplicity we may assume that $(X,o)=(\bC^2/G,0)$ and $G\subset U(2)$. Denote by $S^1\subset U(2)$ the subgroup consisting of diagonal matrices. Let $H:= G\cap S^1$ with $d:=|H|$. 
 Denote by $v_*$ the pushforward of $\ord_0\in\Val_{\bC^2,0}$.
 
 We first show that $\hvol(v_*)=\frac{4}{|G|}$.
 Let $\widehat{\bC^2}$ be the blow up of $\bC^2$ at the origin $0$ with exceptional divisor $E$. Denote by $\pi:\bC^2\to X$
 the quotient map. Then $\pi$ lifts to $\widehat{\bC^2}$ as $\hat{\pi}:\widehat{\bC^2}\to \widehat{X}$, where $\widehat{X}:=\widehat{\bC^2}/G$. We have the following commutative diagram:
 \[
 \begin{tikzcd}
  \widehat{\bC^2}\arrow{r}{\hat{\pi}}\arrow{d}[swap]{g} & 
  \widehat{X}
  \arrow{d}{h}\\
  \bC^2\arrow{r}{\pi} & X
 \end{tikzcd}
 \]
 Let $F\subset\widehat{X}$ be the exceptional divisor of $h$. For a general point on $F$, its stabilizer is exactly $H$. So $\hat{\pi}^*F=d E$, which implies that $v_*=\pi_*(\ord_E)=d~\ord_F$. It is clear that
 \[
  K_{\widehat{\bC^2}}=\hat{\pi}^*\left(K_{\widehat{X}}+\left(1-\frac{1}{d}\right) F \right).
 \]
 Then combining these equalities with $K_{\widehat{\bC^2}}=g^* K_{\bC^2}+E$, we get
 \[
  K_{\widehat{X}}=h^*K_{X} +\left(\frac{2}{d}-1\right) F.
 \]
 Hence $A_X(\ord_F)=\frac{2}{d}$ and $A_X(v_*)=d\cdot A_X(\ord_F)=2$. 
 Lemma \ref{quotmult} implies that $\vol(v_*)=\frac{1}{|G|}$.
 So $\hvol(v_*)=\frac{4}{|G|}$.
 
 Now we will show that $\hvol(v)\geq \frac{4}{|G|}$ for any real 
 valuation $v\in\Val_{X,o}$.
 Consider $\overline{X}:=\bP^2/G$ as a natural projective compactification of $X$. Let $\omega_{FS}$ be the Fubini-Study metric on $\bP^2$. Since $G$ acts isometrically on $(\bP^2,\omega_{FS})$, $\omega_{FS}$ induces an orbifold K\"ahler-Einstein metric $\omega_G$ on $\overline{X}$. Let $l_\infty$ be the line of infinity in $\bP^2$. The metric $\omega_G$ can be also viewed as a conical K\"ahler-Einstein metric on $\overline{X}$ with a cone angle $\frac{2\pi}{d}$ along $D:=l_\infty/G$. Therefore,
 $(\overline{X},(1-\frac{1}{d})D)$ is log Ding semistable. By Proposition \ref{proplb} we know that 
 \[
  \hvol(v)\geq \frac{4}{9} \left(\left(-K_{\overline{X}}-\left(1-\frac{1}{d}\right)D\right)^2\right).
 \]
 Let $\bar{\pi}:\bP^2\to \overline{X}$ be the quotient map. We notice that
 \[
  K_{\bP^2}=\bar{\pi}^*\left(K_{\overline{X}}+\left(1-\frac{1}{d}\right)D\right).
 \]
 Hence
 \[
  \hvol(v)\geq \frac{4}{9}\cdot\frac{((-K_{\bP^2})^2)}{|G|}=\frac{4}{|G|}.
 \]
 Thus we prove the proposition.
\end{proof}

\begin{lem}\label{quotmult}
Let $G$ be a finite group acting on $\bC^2=\Spec~\bC[x,y]$. 
Assume that the $G$-action is free in codimension 1. 
Then we have
\[
\lim_{m\to\infty}\frac{\dim_{\bC}\bC[x,y]^G_{< m}}{m^2/2}=\frac{1}{|G|}.
\]
\end{lem}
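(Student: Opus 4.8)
The plan is to compute $\dim_\bC \bC[x,y]^G_{<m}$ (where $\bC[x,y]_{<m}$ denotes polynomials of total degree $<m$) by a character-average decomposition over $G$, and then to show that only the identity element of $G$ contributes to the leading $m^2$ term. After fixing the local model so that $G\subset GL(2,\bC)$ acts linearly, the action preserves the total-degree grading, so $\bC[x,y]_{<m}$ is a finite-dimensional $G$-representation. First I would invoke the Reynolds operator $P=\frac{1}{|G|}\sum_{g\in G}g$, which is a projection onto the invariant subspace $\bC[x,y]^G_{<m}$; taking traces and using additivity of the trace gives
\[
\dim_\bC \bC[x,y]^G_{<m}=\tr(P)=\frac{1}{|G|}\sum_{g\in G}\tr\bigl(g\mid \bC[x,y]_{<m}\bigr).
\]
The identity term is $\tr(\mathrm{id})=\dim_\bC\bC[x,y]_{<m}=\binom{m+1}{2}=\frac{m(m+1)}{2}$, so after dividing by $m^2/2$ it contributes exactly $\frac{1}{|G|}$ in the limit. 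It then remains only to show that every nonidentity term is $o(m^2)$.

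The key input is the hypothesis that $G$ acts freely in codimension $1$. For $g\neq\mathrm{id}$, finiteness of $G$ forces $g$ to be diagonalizable with eigenvalues $\lambda_1,\lambda_2$ that are roots of unity; its fixed locus is $\ker(g-I)$, whose codimension equals the number of eigenvalues different from $1$. Freeness in codimension $1$ rules out $g$ being a pseudo-reflection (exactly one eigenvalue equal to $1$), while $g\neq\mathrm{id}$ rules out both eigenvalues being $1$. Hence both $\lambda_1,\lambda_2\neq 1$. Diagonalizing, $g$ acts on the monomial $x^ay^b$ by $\lambda_1^a\lambda_2^b$, so
\[
\tr\bigl(g\mid\bC[x,y]_{<m}\bigr)=\sum_{\substack{a,b\ge 0\\ a+b\le m-1}}\lambda_1^a\lambda_2^b.
\]
I would then sum the inner geometric series in $b$ (valid since $\lambda_2\neq 1$) and use that the partial sums $\sum_{a<k}\lambda_1^a$ stay bounded (since $\lambda_1\neq 1$ and $|\lambda_1|=1$) to bound this trace by $O(m)$, uniformly over all $g\neq\mathrm{id}$.

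Finally, combining the two contributions, dividing by $m^2/2$, and letting $m\to\infty$ kills all the $O(m)$ terms and leaves $\frac{1}{|G|}$, which is the claim. The only genuinely nontrivial step — and the one I would flag as the heart of the argument — is the translation of the geometric hypothesis \emph{free in codimension $1$} into the eigenvalue statement that $1$ is not an eigenvalue of any nontrivial $g$; once this is in hand, the rest is a routine Molien-type estimate showing that the fixed-point-free elements cannot contribute to the top-order growth.
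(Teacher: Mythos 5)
Your proof is correct, and its skeleton coincides with the paper's: both linearize the action, average over $G$ via the character/Reynolds-operator formula, isolate the identity element as the source of the $\tfrac{1}{|G|}\cdot\tfrac{m^2}{2}$ main term, and translate ``free in codimension $1$'' into the statement that no nontrivial $g$ has $1$ as an eigenvalue. Where you diverge is in how the nonidentity contributions are dispatched. You estimate each trace $\sum_{a+b\le m-1}\lambda_1^a\lambda_2^b$ directly as $O(m)$ by summing the inner geometric series (this is fine: if $\lambda_1\ne\lambda_2$ all partial sums involved are bounded, and if $\lambda_1=\lambda_2$ one factor contributes a single $O(m)$ term), so the error is killed upon dividing by $m^2/2$. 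The paper instead avoids estimating individual traces: it restricts to $m$ divisible by $|G|$, pairs $d_m+d_{m+1}$ so that the double sum factors as $\bigl(\sum_{i=0}^m\lambda_g^i\bigr)\bigl(\sum_{j=0}^m\mu_g^j\bigr)=1$ for $g\ne\mathrm{id}$, obtains the exact identity $d_m+d_{m+1}=\frac{1}{|G|}\bigl((m+1)^2+|G|-1\bigr)$, and then extracts the leading coefficient by invoking the a priori asymptotic $d_m\sim cm^2$ coming from finite generation of $\bC[x,y]^G$. Your route is more self-contained (no appeal to finite generation of the invariant ring) and gives the limit for all $m$ without passing to a subsequence; the paper's route trades that for an exact closed-form identity at special $m$. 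Both are valid.
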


\begin{proof}
 Denote $W:=\bC[x,y]_1= \bC x\oplus\bC y$. Then we have that
\[ 
 \bC[x,y]\cong
 \bigoplus_{m\geq 0} \Sym^m W.
\]
Denote by $\rho_m:G\to GL(m+1,\bC)$ the representation of $G$ on $\Sym^m W$.
Since $G$ is a finite group, $\rho_m(g)$ is diagonalizable for any $m\geq 0$
and $g\in G$.
Denote the two eigenvalues of $\rho_1(g)$ by $\lambda_g$, $\mu_g$. Then
the eigenvalues of $\rho_m(g)$ are exactly $\lambda_g^m,
\lambda_g^{m-1}\mu_g,\cdots, \lambda_g\mu_g^{m-1},\mu_g^m$. From representation
theory we know that
\[
 \dim_{\bC} (\Sym^m W)^G=\frac{1}{|G|}\sum_{g\in G}\tr(\rho_m(g)).
\]
Let $d_m:=\dim_{\bC}\bC[x,y]^G_{< m}$. Hence we have
\begin{align*}
 d_m & =\sum_{i=0}^{m-1}\dim_{\bC} (\Sym^i W)^G=\frac{1}{|G|}\sum_{i=0}^{m-1}\sum_{g\in G}\tr(\rho_i(g))\\
  & =\frac{1}{|G|}\sum_{g\in G}\sum_{i=0}^{m-1}(\lambda_g^i+\lambda_g^{i-1}\mu_g
  +\cdots+\lambda_g\mu_g^{i-1}+\mu_g^i)\\
  & =\frac{1}{|G|}\sum_{g\in G}\sum_{i+j\leq m-1}\lambda_g^i\mu_g^j.
\end{align*}
The eigenvalues $\lambda_g,\mu_g$ can be chosen in a way so that 
$\lambda_{g^{-1}}=\lambda_g^{-1}$ and $\mu_{g^{-1}}=\mu_g^{-1}$. For 
any $m$ divisible by $|G|$, we have $\lambda_g^m=\mu_g^m=1$.
Hence we get:
\begin{align*}
 d_{m+1} & = \frac{1}{|G|}\sum_{g\in G}\sum_{i+j\leq m}\lambda_g^i\mu_g^j = \frac{1}{|G|}\sum_{g\in G}\sum_{i+j\leq m}\lambda_{g^{-1}}^i\mu_{g^{-1}}^j\\
 & = \frac{1}{|G|}\sum_{g\in G}\sum_{i+j\leq m}\lambda_g^{m-i}\mu_g^{m-j}.
\end{align*}
Therefore, for any $m$ divisible by $|G|$ we have
\begin{align*}
 d_m+d_{m+1} & =\frac{1}{|G|}\sum_{g\in G}\left(\sum_{i+j\leq m-1}\lambda_g^i\mu_g^j+
 \sum_{i+j\leq m}\lambda_g^{m-i}\mu_g^{m-j}\right)\\
 & = \frac{1}{|G|}\sum_{g\in G}\left(\sum_{i=0}^m\lambda_g^i\right)
 \left(\sum_{j=0}^m\mu_g^j\right).
\end{align*}
Since $G$ acts freely in codimension $1$, we have that $\lambda_g,
\mu_g\neq 1$ unless $g$ is the identity. Hence for any $g$ that is not
the identity, we have $\sum_{i=0}^m\lambda_g^i=1$ and $\sum_{j=0}^m\mu_g^j=1$.
As a result,
\begin{equation}\label{quoteq}
 d_m+d_{m+1}=\frac{1}{|G|}\left((m+1)^2+|G|-1\right).
\end{equation}
Since $\bC[x,y]^G$ is a finitely generated $\bC$-algebra, we know that
$d_m\sim cm^2$ for some constant $c$. Thus \eqref{quoteq} implies that
$c=\frac{1}{2|G|}$, which finishes the proof.
\end{proof}

\begin{rem}
 It is clear that $(\bC^2\setminus\{0\})/G\to \bP^1/G$ has a 
 natural Seifert $\bC^*$-bundle structure with $\ocC_\orb= \bP^2/G$
 and $V_\infty=l_\infty/G$. Hence Proposition \ref{quotsurf} can be also
 proved by applying Theorem \ref{thmorb}.
\end{rem}

\end{exmp}

\begin{exmp}[A logarithmic example]
Assume $V$ is a projective toric variety determined by a lattice polytope $P\subset \bR^{n-1}$ which is defined by the following linear functions on $\bR^{n-1}$:
\[
l_i(x):=\langle \eta_i, x\rangle+a_i\ge  0, \quad 1\le i\le N.
\]
where $\eta:=\{\eta_i\}$ is a set of primitive inward normal vectors to the facets of $P$. Here the primitivity of $\eta_i$ means that $a\cdot \eta_i\in \bZ^{n-1}$ for $a>0$ if and only if $a\in \bZ_{>0}$. 
We denote by $p_*$ the center of mass of $P$ with respect to the Lebesgue measure. 

By the works in \cite{Leg11, BB12, DGSW13}, there exists a conical K\"{a}hler-Einstein potential on the pair $(V, E; H)$ where $E=\sum_{i}(1-\gamma_i)E_i$ with $\gamma_i=r\cdot l_i(p_*)$ for any $r>0$ satisfying
$r\cdot l_i(p_*)\le 1$ for $1\le i\le N$. In this case, the log anti canonical class of $(V, E)$ is 
\[
-K_{(V,E)}=\sum_i E_i-\sum_i (1-\gamma_i)E_i=\sum_i \gamma_i E_i=r \cdot \sum_i L_i(p_*)E_i=r H,
\] 
which is ample. By Proposition \ref{cKEproj}, there exists a conical KE on the projective cone $(\ocC, \cE+(1-\beta)V_\infty)$ with $\cE=\sum_i (1- \gamma_i) \cE_i$ and $\beta=r/n$. We can also get this result by using the existence result in \cite{Leg11, BB12, DGSW13}. Indeed,
$\ocC$ is also a toric projective variety given the polytope $\cP$ in $\bR^n$ defined by the linear functions of $y=(y', y_n)\in \bR^{n-1}\times\bR$:
\[
L_i(y):=\langle \eta_i, y'\rangle+a_i y_{n}\ge 0; \quad  L_{N+1}:=-y_n+1\ge 0.
\]
It's elementary that the center of mass of $\cP$ is given by $\mathfrak{p}_*=\frac{n}{n+1}(p_*, 1)$. So by \cite{Leg11, BB12, DGSW13} there is a conical KE metric on the pair $(\ocC, D)$ where 
\[
D=\sum_i (1-\beta_i)\cE_i+(1-\beta_n)V_\infty.
\]
The cone angles are given by: 
\begin{eqnarray*}
\beta_i=s\cdot L_i(\mathfrak{p}_*)=s\left(\left\langle \eta_i, \frac{n}{n+1}p_*\right\rangle + a_i\frac{n}{n+1}\right)=s l_i(p_*)=\frac{sn}{r(n+1)}\gamma_i.
\end{eqnarray*} 
and 
\[
\beta_n=s\cdot L_n(\mathfrak{p}_*)=s\cdot \left(-\frac{n}{n+1}+1\right)=\frac{s}{n+1}.
\]
So if we choose $s=\frac{r(n+1)}{n}$ so that $\beta_i=\gamma_i$ for $1\le i\le n-1$, then $\beta_n=r/n$ as seen. 

By Theorem \ref{thmlog}, $\hvol_{((\cC, \cE)}$ obtains its global minimum at the valuation corresponding to the canonical $\bC^*$-action on the cone. 
\end{exmp}


\section{Derivative of volumes revisited}\label{secdervol}

In this section, we will point out the relation between the estimates in Section \ref{secvalfil} to the arguments/calculations in \cite{Li15b}. This will allow us to get a different proof of Theorem \ref{thmlog}. The main idea is similar to that in \cite{Li15a, Li15b}, that is the formula appearing in log version of Fujita's result should be viewed as a derivative of normalized volumes. Firstly, similar to \cite{Li15b} we will derive a formula for $\hvol_{(\cC, \cE)}(v)$ using the ``leading component" filtration. Then we consider a function $\Phi(s)$, which is convex with respect to $s\in [0,1]$  and interpolates: $\Phi(0)=\hvol_{(\cC, \cE)}(v_0)$ and $\Phi(1)=\hvol_{(\cC, \cE)}(v)$. To show $\Phi(1)\ge \Phi(0)$, by the convexity, we just need to show that the derivative of $\Phi$ at $s=0$ is nonnegative. Our main observation is that this non-negativity is exactly the inequality \eqref{Avint}. So from this point, we can use Proposition \ref{logFujval} and get a different proof of Theorem \ref{thmlog}.


\subsection{A volume formula}

Let $(V, H)$ be a polarized projective variety. Denote by $R=\bigoplus_{k=0}R_k=\bigoplus_{k=0}^{+\infty}H^0(V, kH)$ the graded section ring of $(V, H)$. 
Let $\mathcal{C}=C(V, H)={\rm Spec}(R)$ be the affine cone of $V$ with the polarization $H$.  Let $v_1$ be any real valuation centered at $o$ with $A_{\cC}(v_1)<+\infty$. We introduce the following notation. 
\begin{defn}
For any $g\in R=\bigoplus_{k=0}^{+\infty}R_k$, assume that it is decomposed into ``homogeneous"
components $g=g_{k_1}+\cdots+g_{k_p}$ with $g_{k_j}\neq 0\in R_{k_j}$ and $k_1<k_2<\cdots<k_p$. We define $\deg(g)=k_p$ and define $\bld(g)$ to be the ``leading component" $g_{k_p}$. 
\end{defn}
With the above definition, we define a filtration:
\begin{equation}\label{mirafil}
\cF^x R_k=\{f\in R_k; \exists g\in R \text{ such that } v_1(g)\ge x \text{ and } \bld(g)=f\}.
\end{equation}
We notice that following properties of $\cF^x R_{\bullet}$.
\begin{enumerate}
\item 
For fixed $k$, 
$\{\cF^x R_k\}_{x\in \bR}$ is a family of decreasing $\bC$-subspace of $R_k$. 
\item
$\cF$ is multiplicative:
$v_1(g_i)\ge x_i$ and $\bld(g_i)=f_i\in R_{k_i}$ implies 
\[
v_1(g_1 g_2)\ge x_1+x_2, \quad \bld(g_1 g_2)=f_1\cdot f_2\in R_{k_1+k_2}.
\]
\item 
If $A(v_1)<+\infty$, then $\cF$ is linearly bounded. Indeed by Izumi's theorem in \cite[Proposition 1.2]{Li15a} (see also \cite{Izu85, Ree89, BFJ12}), there exist $c_1, c_2\in (0, +\infty)$ such that
\[
c_1 v_0\le v_1\le c_2 v_0.
\]
For any $g\in R$, $k:=v_0(\bld(g))\ge v_0(g)\ge c_2^{-1}x$. So if $x>c_2 k$ then $\cF^x R_k=0$.
So $\cF$ is linearly bounded from above. On the other hand, for any $f\in R_k$, $v_1(f)\ge c_1 v_0(f)= c_1 k$. So if $x \le c_1 k$, then $\cF^x R_k=R_k$. So $\cF$ is linearly bounded from below. 
Note that the argument in particular shows the following relation:
\begin{equation}\label{cFlbd}
\inf_{\fm}\frac{v_1}{v_0}\le e_{\min}\le e_{\max}\le \sup_{\fm}\frac{v_1}{v_0}.
\end{equation}
\end{enumerate}
For later convenience, from now on we will fix the following constant:
\begin{equation}\label{defc1}
c_1:=\inf_{\fm}\frac{v_1}{v_0}>0.
\end{equation}
By \cite[Lemma 4.2]{Li15b}, we have the following characterization of this constant:
\begin{equation}\label{infv1}
c_1=v_1(V)=\inf_{i>0} \frac{v_1(R_i)}{i}.
\end{equation}
The following is a reason why the filtration in \eqref{mirafil} works for our purpose.
\begin{prop}[{cf. \cite[Proposition 4.3]{Li15b}}]\label{propdim}
For any $m\in \bR$, we have the following identity:
\[
\sum_{k=0}^{+\infty}\dim_{\bC}\left(R_k/\cF^m R_k\right)=\dim_{\bC}\left(R/\fa_m(v_1)\right).
\]
\end{prop}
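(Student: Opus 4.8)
The plan is to prove the identity by filtering both sides according to the grading induced by $v_0=\ord_V$ and comparing the associated graded pieces, in the same spirit as \cite[Proposition 4.3]{Li15b}. Concretely, I would equip $R$ with the increasing, exhaustive filtration by top degree, $R^{\le N}:=\bigoplus_{k\le N}R_k$, each term being finite-dimensional since $V$ is projective. Because $v_1$ is centered at $o$, the valuative ideal $\fa_m(v_1)$ is $\fm$-primary (by \cite[Proposition 1.5]{ELS03}) and hence of finite codimension, so $Q:=R/\fa_m(v_1)$ is a finite-dimensional $\bC$-vector space. It inherits the filtration $Q^{\le N}:=(R^{\le N}+\fa_m(v_1))/\fa_m(v_1)$, which vanishes for $N<0$ and equals $Q$ for $N\gg 0$. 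Thus $\dim_\bC Q=\sum_N\dim_\bC\operatorname{gr}_N Q$, where $\operatorname{gr}_N Q=Q^{\le N}/Q^{\le N-1}$, and the whole problem reduces to computing these graded pieces.

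The heart of the argument is a canonical isomorphism $\operatorname{gr}_N Q\cong R_N/\cF^m R_N$ for each $N$. I would produce it from the obvious linear map $R_N\to\operatorname{gr}_N Q=(R^{\le N}+\fa_m)/(R^{\le N-1}+\fa_m)$ sending $f\in R_N\subset R^{\le N}$ to its class; since $R^{\le N}=R^{\le N-1}\oplus R_N$, this map is surjective. The definition of $\cF$ via leading terms enters precisely in identifying the kernel: for nonzero $f\in R_N$, one has $f\in R^{\le N-1}+\fa_m$ iff $f=h+a$ with $h\in R^{\le N-1}$ and $a\in\fa_m(v_1)$, in which case $a=f-h$ satisfies $\deg(a)=N$ and $\bld(a)=f$ together with $v_1(a)\ge m$, i.e.\ exactly $f\in\cF^m R_N$; conversely any $f=\bld(g)\in\cF^m R_N$ becomes $0$ in $\operatorname{gr}_N Q$ after subtracting the lower-degree terms of $g$. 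Summing the resulting equalities $\dim\operatorname{gr}_N Q=\dim R_N/\cF^m R_N$ over all $N$ yields the proposition.

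The only delicate point, and the step I expect to require the most care, is this kernel computation, specifically the claim that no cancellation disturbs the leading-term bookkeeping: when $f\neq 0$ the degree-$N$ component of $a=f-h$ is genuinely $f$, so that $\deg(a)=N$ and $\bld(a)=f$. This is in fact immediate because $h$ is supported in degrees $\le N-1$, and as a byproduct it shows that the set $\cF^m R_N$ is a $\bC$-subspace (being a kernel). The remaining ingredients are routine: centeredness of $v_1$ at $o$ forces $v_1\ge 0$ on $R$, so that $\fa_m(v_1)$ is an honest ideal compatible with the grading, and the finite-dimensionality of $Q$ and of each $R_N$ guarantees every sum is finite and the filtration argument is legitimate. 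I do not expect the hypothesis $A_\cC(v_1)<+\infty$ to be needed for this particular identity beyond what centeredness at $o$ already supplies.
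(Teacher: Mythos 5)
Your proof is correct, and it establishes the identity together with the subsidiary points you flag (that $\cF^m R_N$ is a subspace, and that only centeredness of $v_1$ at $o$ --- not $A_{\cC}(v_1)<+\infty$ --- is needed). The underlying mechanism is the same as the paper's: everything reduces to the observation that a nonzero $f\in R_N$ lies in $\cF^m R_N$ if and only if $f$ is congruent modulo $\fa_m(v_1)$ to an element of degree $<N$. But your packaging is genuinely different and cleaner. The paper chooses a basis of each $R_k/\cF^m R_k$, lifts it to $R_k$, and verifies by hand that the union of the images is a basis of $R/\fa_m(v_1)$: linear independence is checked via the leading component of a putative relation, and spanning is proved by a minimal-counterexample induction on degree split into two cases. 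You instead filter $Q=R/\fa_m(v_1)$ by the images of $R^{\le N}=\bigoplus_{k\le N}R_k$ and identify each graded piece $(R^{\le N}+\fa_m)/(R^{\le N-1}+\fa_m)$ with $R_N/\cF^m R_N$ through a single surjection-plus-kernel computation; summing over $N$ then replaces both the independence and the spanning arguments at once, with exhaustiveness of the filtration doing the work of the paper's induction. A bonus of your formulation is that finiteness of the left-hand sum falls out for free (each graded piece is a subquotient of the finite-dimensional $Q$), whereas the paper derives it separately from linear boundedness of $\cF$, i.e.\ ultimately from Izumi's theorem. The only pedantic point: $\bld(g)$ is defined only for $g\neq 0$, so one should adopt the convention $0\in\cF^m R_N$ for the kernel identification to be literally an equality of subspaces; this is harmless and is implicitly assumed in the paper as well.
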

Notice that because of linearly boundedness, the sum on the left hand side is a finite sum. More precisely, by the above discussion, when $k\ge m/c_1$, then $\dim_{\bC}\left(R_k/\cF^m R_k\right)=0$.
\begin{proof}
For each fixed $k$, let $d_k=\dim_{\bC}(R_k/\cF^m R_k)$. Then we can choose a basis of $R_k/\cF^m R_k$:
\[
\left\{[f^{(k)}_i]_k \; |\; f^{(k)}_i\in R_k, 1\le i\le d_k \right\},
\] 
where $[\cdot]_k$ means taking quotient class in $R_k/\cF^mR_k$. Notice that for $k\ge \lceil m/c_1 \rceil$, the set is empty.
We want to show that the set 
\[
\mathfrak{B}:=\left\{[f^{(k)}_i]\;|\; 1\le i\le d_k, 0\le k\le \lceil m/c_1\rceil-1 \right\}.
\] 
is a basis of $R/\fa_m(v_1)$, where $[\cdot]$ means taking quotient in $R/\fa_m(v_1)$. 
\begin{itemize}
\item
We first show that $\mathfrak{B}$ is a linearly independent set. 
For any nontrivial linear combination of $[f^{(k)}_i]$:
 \[
\sum_{k=0}^{N} \sum_{i=1}^{d_k}c^{(k)}_i [f^{(k)}_i]=\left[\sum_{k=0}^{N} \sum_{i=1}^{d_k} c^{(k)}_i f^{(k)}_i\right]=[f^{(k_1)}+\cdots+f^{(k_p)}]=:[F],
 \] 
where $f^{(k_j)}\neq 0\in R_{k_j}\setminus \cF^m R_{k_j}$ and 
$k_1<k_2<\cdots<k_p$. In particular $\bld(F)=f^{(k_p)}\not\in \cF^m R_{k_p}$. By the definition of $\cF^m R_{k_p}$, we know that
\[
f^{(k_1)}+\cdots+f^{(k_p)}\not\in \fa_m(v_1),
\]
which is equivalent to $[F]\neq 0\in R/\fa_m(v_1)$.
\item 
We still need to show that $\mathfrak{B}$ spans $R/\fa_m(v_1)$.
Suppose on the contrary $\mathfrak{B}$ does not span $R/\fa_m(v_1)$. Then there is some $k\in \bZ_{>0}$ and $f\in R_k-\fa_m(v_1)$ such that $[f]\neq 0\in R/\fa_m(v_1)$ can not
be written as a linear combination of $[f^{(k)}_i]$, i.e. not in the span of $\mathfrak{B}$. 
We can choose a minimal $k$ such that this happens. 
So from now on we assume that $k$ has been chosen such that that for any $k'<k$ and $g\in R_{k'}$, $[g]$ is in the span of $\mathfrak{B}$.
Then there are two cases to consider.
\begin{enumerate}
\item
If $f\in R_k\setminus \cF^m R_k$, then since $\{[f^{(k)}_i]_k\}$ is a basis of $R_k/\cF^m R_k$, we can write $f=\sum_{j=1}^{d_k} c_j f^{(k)}_j+h_k$ where $h_k\in \cF^m R_k$. By the definition of $\cF^m R_k$, there exists $h\in \fa_m(v_1)$ with $\bld(h)=h_k$. By the minimality of $k$, we know that $[h_k-h]=[h_k]$ is in the span of $\mathfrak{B}$. So $[f]=\sum_{j=1}^{d_k}c_j [f^{(k)}_j]+[h_k]$ is also in the span of $\mathfrak{B}$. 
Contradiction. 
\item
If $f\in \cF^m R_k\subset R_k$, then by the definition of $\cF^mR_k$, $f+h\in \fa_m(v_1)$ for some $h\in R$ such that $\bld(f+h)=f$. Since we assumed that
$[f]\neq 0\in R/\fa_m(v_1)$, we have $h\neq 0$ and $k':=\deg(h)<v_0(f)=k$. 
Now we can decompose $h$ into homogeneous components: 
\[
h=h^{(k_1)}+\cdots+h^{(k_p)},
\]
with $h^{(k_j)}\in R_{k_j}$ and $k_1<\cdots<k_p=k'$. Because $k'<k$ and the minimal property of $k$, we know that each $[h^{(k_j)}]$ in the span of $\mathfrak{B}$. 
So we have $[f]=[(f+h)-h]=[-h]$ is in the span of $\mathfrak{B}$. This contradicts our assumption that $[f]$ is not in the span of $\mathfrak{B}$. 
\end{enumerate}

\end{itemize}
\end{proof}
With the above proposition, we can follow \cite{Li15b} to derive the following volume formula:
\begin{eqnarray}\label{volv1a}
\vol(v_1)&=&\lim_{p\rightarrow +\infty} \frac{n!}{m^n}\dim_{\bC} R/\fa_{m}(v_1)\nonumber\\
&= &\frac{H^{n-1}}{c_1^n} - n\int_{c_1}^{+\infty}\vol\left(R^{(t)}\right)\frac{dt}{t^{n+1}}\\
&=&-\int_{c_1}^{+\infty} \frac{d\vol\left(R^{(t)}\right)}{t^n}.
\end{eqnarray}
As in \cite{Li15b}, motivated by the case of $\bC^*$-invariant valuations, we define a function of two parametric variables $(\lambda, s)\in (0,+\infty)\times [0,1]$:
\begin{eqnarray}\label{eqintfn}
\Phi(\lambda, s)&=&\frac{H^{n-1}}{(\lambda c_1 s+(1-s))^n}-n \int^{+\infty}_{c_1}
\vol\left(R^{(t)}\right)\frac{\lambda s dt}{(1-s+\lambda s t)^{n+1}}\nonumber\\
&=&\int^{+\infty}_{c_1} \frac{-d\vol\left(R^{(t)}\right)}{((1-s)+\lambda st)^{n}}.
\end{eqnarray}
Then it's easy to verify that $\Phi(\lambda, s)$ satisfies the following properties:
\begin{enumerate}
\item 
For any $\lambda\in (0,+\infty)$, we have:
\[
\Phi\left(\lambda, 1\right)= \vol(\lambda v_1)=\lambda^{-n}\vol(v_1), \quad \Phi(\lambda, 0)=\vol(v_0)=H^{n-1}.
\]
\item
For fixed $\lambda\in (0,+\infty)$, $\Phi(\lambda, s)$ is continuous and convex with respect to $s\in [0,1]$.
\item 
The directional derivative of $\Phi(\lambda, s)$ at $s=0$ is equal to:
\begin{equation}\label{dirder0}
\Phi_s(\lambda,0)=n\lambda H^{n-1}\left(\lambda^{-1}-c_1-\frac{1}{H^{n-1}}\int^{+\infty}_{c_1} \vol\left(R^{(t)}\right) dt \right).
\end{equation}
\end{enumerate}
Roughly speaking, the parameter $\lambda$ is a rescaling parameter, and $s$ is an interpolation parameter. To apply this lemma 
to our problem, we let $\lambda=\frac{r}{A_{(\cC, \cE)}(v_1)}=:\lambda_*$ such that
\[
\Phi(\lambda_*, 1)=\frac{A_{(\cC, \cE)}(v_1)^n\vol(v_1)}{r^n}=\frac{\hvol_{(\cC, \cE)}(v_1)}{r^n}.
\]
Recall that $\hvol_{(\cC,\cE)}(v_0)=r^n H^{n-1}=\hvol(v_0)$. So the problem of showing $\hvol_{(\cC, \cE)}(v_1)\ge \hvol_{(\cC, \cE)}(v_0)$ is equivalent to showing that $\Phi(\lambda_*, 1)\ge \Phi(\lambda_*, 0)$.
By item 2-3 of the above lemma, we just need to show that $\Phi_s(\lambda_*, 0)$ is non-negative. 
This will be finally proved in the next section by showing that the derivative $\Phi_s(\lambda_*, 1)$ is nothing but the 
left-hand-side of \eqref{Avint} for the case $(X, D)=(\ocC, (1-\beta)V_\infty+\cE)$.

Here to prepare for this calculation, we first transform the expression in \eqref{dirder0} into a different expression. For convenience, we first define a function:
\begin{equation}\label{eqTheta}
\Theta(t)=n\int^{+\infty}_{t}\vol(R^{(x)})\frac{t^n dx}{x^{n+1}}.
\end{equation}
Although this seems to be arbitrary at present, this turns out to be a natural function. Indeed, we will see that $\Theta(t)$ is nothing but the $\vol(\cF S^{(t)})$ (see \eqref{volFSt}).
Notice that by the volume formula \eqref{volv1a}, we have:
\begin{equation}\label{Thetac1}
\Theta(c_1)=c_1^n n \int^{+\infty}_{c_1}\vol(R^{(x)})\frac{dx}{x^{n+1}}=H^{n-1}-c_1^n \vol(v_1).
\end{equation}
Let's calculate the integral $\int^{+\infty}_{c_1}\Theta(t)dt$:
\begin{eqnarray*}
\int^{+\infty}_{c_1}\Theta(t)dt&=&n\int^{+\infty}_{c_1}dt\int^{+\infty}_{t}\vol\left(R^{(x)}\right)\frac{t^ndt}{x^{n+1}}\\
&=&n\int^{+\infty}_{c_1}\vol\left(R^{(x)}\right)\frac{dx}{x^{n+1}}\int^{x}_{c_1}t^ndt\\
&=&\frac{n}{n+1}\left(\int^{+\infty}_{c_1}\vol\left(R^{(x)}\right)dx-c_1\int^{+\infty}_{c_1}\vol\left(R^{(x)}\right)\frac{c_1^ndx}{x^{n+1}}\right)\\
&=&\frac{n}{n+1}\int^{+\infty}_{c_1}\vol\left(R^{(x)}\right)dx-\frac{c_1}{n+1}\Theta(c_1).
\end{eqnarray*}
So we get the formula:
\begin{equation}
\int^{+\infty}_{c_1}\vol\left(R^{(x)}\right)dx=\frac{n+1}{n}\int^{+\infty}_{c_1}\Theta(t)dt+\frac{c_1}{n}\Theta(c_1).
\end{equation}

Dividing the above formula by $H^{n-1}$ and substituting this into the \eqref{dirder0}, we get the new expression for the derivative $\Phi$ at $s=0$:
\begin{equation}\label{dirder0b1}
\Phi_s(\lambda,0)=n \lambda H^{n-1} \left(\lambda^{-1}-c_1-\frac{n+1}{n H^{n-1}}\int^{+\infty}_{c_1} \Theta dt-\frac{\Theta(c_1)}{nH^{n-1}}\right).
\end{equation} 
Using the formula \eqref{Thetac1}, we also get another expression:
\begin{equation}\label{dirder0b}
\Phi_s(\lambda,0)=n \lambda H^{n-1} \left(\lambda^{-1}-c_1\frac{n+1}{n}-\frac{n+1}{n H^{n-1}}\int^{+\infty}_{c_1} \Theta dt+\frac{c_1^{n+1}}{n}\frac{\vol(v_1)}{H^{n-1}}\right).
\end{equation} 


\subsection{Completion of the second proof of Theorem \ref{thmlog}}

Let $\ocC=\cC\cup V_\infty={\rm Proj}(S)$ be the projective cone as before, where 
\[
S=\bigoplus_{m=0}^{+\infty} S_m=\bigoplus_{k=0}^{+\infty} H^0(V, mH)
\] 
and $H=-r^{-1}(K_V+E)$ for $r\in \bQ_{>0}$. Recall that we have 
\[
K_{\ocC}+\cE+(1-\beta)V_\infty=-(r+1)V_\infty+(1-\frac{r}{n})V_\infty=-\frac{r(n+1)}{n}V_\infty.
\]
For simplicity of notations, we let $\delta=r(n+1)/n$ and $L=V_\infty$. Notice that $(L^n)=(H^{n-1})$.
\begin{prop}
We have the following equality:
\begin{equation}\label{dirder0c}
\Phi_s(\lambda,0)=n\lambda (H^{n-1})\left(\lambda^{-1}-\frac{n+1}{n(H^{n-1})}\int^{+\infty}_0\vol\left(\cF  S^{(t)}\right)dt\right),
\end{equation}
where the left-hand-side is the same as in \eqref{dirder0}.
\end{prop}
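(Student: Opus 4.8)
The plan is to reduce the whole statement to the single identity $\vol(\cF S^{(t)})=\Theta(t)$, and then to rewrite the expression \eqref{dirder0b} for $\Phi_s(\lambda,0)$ using it, absorbing the remaining constant terms into the integral of $\Theta$ over $[0,c_1]$. First I would make the filtration $\cF$ on the section ring $S=\bigoplus_m H^0(\ocC,mL)$ of the projective cone explicit: it is the one induced by $v_1$, namely $\cF^x S_m=H^0(\ocC,mL\otimes\fa_x(v_1))$. Since $v_1$ is centered at $o\in\cC$ while $V_\infty$ avoids $o$, the bundle $L=V_\infty$ is trivial near $o$ and the twist by $\fa_x$ merely imposes $v_1\ge x$. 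Dehomogenizing by the degree-one variable $u$ (setting $u=1$ on the chart $\cC$) identifies $S_m=\bigoplus_{k=0}^{m}R_k u^{m-k}$ with the truncation $R_{\le m}:=\bigoplus_{k=0}^{m}R_k$, under which $\cF^{mt}S_m\cong R_{\le m}\cap\fa_{mt}(v_1)$.

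Next I would compute $\dim_{\bC}(R_{\le m}\cap\fa_{mt})$ exactly, via a graded refinement of Proposition \ref{propdim}. Filtering by degree, $G_j:=R_{\le j}\cap\fa_{mt}$, and sending an element to its top-degree component realizes $G_j/G_{j-1}$ isomorphically onto the leading-component subspace $\cF^{mt}R_j$ of \eqref{mirafil} for each $0\le j\le m$; summing gives $\dim_{\bC}\cF^{mt}S_m=\sum_{j=0}^m\dim_{\bC}\cF^{mt}R_j$. Dividing by $m^n/n!$, writing $\dim_{\bC}\cF^{mt}R_j\sim\frac{j^{n-1}}{(n-1)!}\vol\left(R^{(mt/j)}\right)$, viewing the result as a Riemann sum in $\xi=j/m$, and then substituting $x=t/\xi$, yields
\begin{equation}\label{volFSt}
\vol\left(\cF S^{(t)}\right)=n\int_{0}^{1}\xi^{n-1}\vol\left(R^{(t/\xi)}\right)d\xi=n\,t^n\int_t^{+\infty}\vol\left(R^{(x)}\right)\frac{dx}{x^{n+1}}=\Theta(t),
\end{equation}
the last equality being the definition \eqref{eqTheta}.

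With \eqref{volFSt} in hand the rest is bookkeeping. For $t\le c_1$ one has $\vol\left(R^{(x)}\right)=H^{n-1}$ on $[t,c_1]$, and inserting the value of $\int_{c_1}^{+\infty}\vol\left(R^{(x)}\right)x^{-n-1}dx$ from \eqref{Thetac1} into \eqref{eqTheta} gives $\Theta(t)=H^{n-1}-t^n\vol(v_1)$ there (consistent with the bound \eqref{liuest}); hence $\int_0^{c_1}\Theta\,dt=c_1 H^{n-1}-\frac{c_1^{n+1}}{n+1}\vol(v_1)$. Consequently
\[
\frac{n+1}{nH^{n-1}}\int_0^{c_1}\Theta\,dt=\frac{n+1}{n}c_1-\frac{c_1^{n+1}}{n}\frac{\vol(v_1)}{H^{n-1}},
\]
which is exactly (minus) the combination of constant terms appearing inside the bracket of \eqref{dirder0b}. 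Substituting this back converts \eqref{dirder0b} into $\Phi_s(\lambda,0)=n\lambda H^{n-1}\left(\lambda^{-1}-\frac{n+1}{nH^{n-1}}\int_0^{+\infty}\Theta\,dt\right)$, and replacing $\Theta$ by $\vol\left(\cF S^{(t)}\right)$ via \eqref{volFSt} gives the desired \eqref{dirder0c}.

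The main obstacle is the passage from the exact dimension identity $\dim_{\bC}\cF^{mt}S_m=\sum_{j=0}^m\dim_{\bC}\cF^{mt}R_j$ to the integral in \eqref{volFSt}: one must justify that $\frac{n!}{m^n}\sum_{j=0}^m\dim_{\bC}\cF^{mt}R_j$ genuinely converges to the stated integral, i.e. that the Riemann sums converge uniformly and that the relevant $\limsup$'s are honest limits. This relies on the linear boundedness of $\cF$ (property (3) around \eqref{cFlbd}) together with the continuity and monotonicity of $s\mapsto\vol\left(R^{(s)}\right)$ and the existence of filtered volumes as limits from Okounkov-body theory. Fortunately this is the same analysis already used to derive the affine-cone formula \eqref{volv1a} following \cite{Li15b}, so it can be invoked essentially verbatim.
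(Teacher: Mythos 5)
Your proposal is correct and follows essentially the same route as the paper: establish $\dim_{\bC}\cF^{x}S_m=\sum_{k=0}^m\dim_{\bC}\cF^{x}R_k$ (your associated-graded argument for the degree filtration is just a repackaging of the paper's basis-lifting proof of the lemma preceding \eqref{volFSt}), deduce $\vol(\cF S^{(t)})=\Theta(t)$ (the paper cites \cite[Lemma 4.5]{Li15b} where you write out the Riemann sum, with the same analytic justification), and then absorb the constant terms of \eqref{dirder0b} into $\int_0^{c_1}\Theta\,dt$ exactly as in the paper's final computation.
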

Assuming this formula, we can prove our Theorem \ref{thmlog}. Indeed, when $\lambda=\lambda_*=\frac{r}{A_{(\cC, \cE)}(v_1)}$, we have:
\begin{eqnarray*}
\Phi_s(\lambda_*,0)&=&\frac{nr (H^{n-1})}{A_{(\cC, \cE)}(v)}\left(\frac{A_{(\cC, \cE)}(v)}{r}-\frac{n+1}{n (H^{n-1})}\int^{+\infty}_0\vol\left(\cF S^{(t)}\right)dt\right)\\
&=&\frac{n(H^{n-1})}{A_{(\cC, \cE)}(v)}\left(A_{(\ocC, D)}(v)-\frac{\delta}{L^n}\int^{+\infty}_0 \vol\left(\cF S^{(t)}\right)dt\right),
\end{eqnarray*}
where $X=\ocC$ and $D=(1-\beta)V_\infty+\cE$. By Proposition \ref{logFujval}, the right-hand-side is nonnegative.


The rest of this section is devoted to the proof of the formula \eqref{dirder0c}. We start with the following observation, which can be seen as a cut-off version of Lemma \ref{propdim}.
\begin{lem}
For any $x\in \bR$, we have the following identity:
\begin{equation}\label{dimkFkR}
\dim_{\bC}\cF^{x} S_m=\sum_{k=0}^{m}\dim_{\bC} \cF^{x} R_k.
\end{equation}
\end{lem}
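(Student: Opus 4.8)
The plan is to make the valuative filtration on $S$ completely explicit in terms of the affine-cone ring $R$, and then to reduce the identity to a degree-refined version of Proposition \ref{propdim}.

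First I would unwind the definition $\cF^x S_m = H^0(\ocC, mL\otimes \fa_x(v_1))$ (the filtration attached to $v_1$ as in Section \ref{secvalfil}, with $L=V_\infty$). Over the affine chart $\cC=\{u\neq 0\}$ the section $u$ trivializes $L=\cO_{\ocC}(V_\infty)$, and a global section $s=\sum_{k=0}^m f_k u^{m-k}\in S_m$ restricts, after dividing by $u^m$, to the function $F=\sum_{k=0}^m f_k$ under the identification $R[u]_{(u)}\cong R$. This gives a linear isomorphism $\Psi\colon S_m\xrightarrow{\ \sim\ }R_{\leq m}:=\bigoplus_{k=0}^m R_k$. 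Since $\fa_x(v_1)$ is $\fm$-primary and hence supported at $o$, the membership $s\in mL\otimes\fa_x(v_1)$ is a condition only at $o$, and through the trivialization it reads $F\in\fa_x(v_1)$, i.e. $v_1(F)\geq x$. Therefore $\Psi(\cF^x S_m)=\fa_x(v_1)\cap R_{\leq m}$, and it suffices to prove
\[
\dim_{\bC}\bigl(\fa_x(v_1)\cap R_{\leq m}\bigr)=\sum_{k=0}^m\dim_{\bC}\cF^x R_k.
\]

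Next I would pass to the complementary count. Writing the right-hand side as $\sum_{k=0}^m\bigl(\dim_{\bC}R_k-\dim_{\bC}(R_k/\cF^x R_k)\bigr)=\dim_{\bC}R_{\leq m}-\sum_{k=0}^m\dim_{\bC}(R_k/\cF^x R_k)$, and using the exact sequence relating $\fa_x(v_1)\cap R_{\leq m}$ to the image of $R_{\leq m}$ in $R/\fa_x(v_1)$, the identity is equivalent to
\[
\dim_{\bC}\mathrm{Im}\bigl(R_{\leq m}\to R/\fa_x(v_1)\bigr)=\sum_{k=0}^m\dim_{\bC}(R_k/\cF^x R_k).
\]
Here I would reuse the basis produced in the proof of Proposition \ref{propdim}: choosing lifts $f^{(k)}_i\in R_k$, $1\leq i\leq d_k:=\dim_{\bC}(R_k/\cF^x R_k)$, of a basis of $R_k/\cF^x R_k$, the classes $[f^{(k)}_i]$ form a basis of $R/\fa_x(v_1)$. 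In particular the subfamily indexed by $0\leq k\leq m$ is linearly independent of the expected cardinality $\sum_{k\leq m}d_k$, so everything reduces to showing that these classes span the image of $R_{\leq m}$.

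The spanning step is where the degree bookkeeping must be done carefully, and I expect it to be the main obstacle. Setting $V_k:=\mathrm{span}\{[f^{(k')}_i]:k'\leq k\}$, I would prove by strong induction on $k$ that $\mathrm{Im}(R_k\to R/\fa_x(v_1))\subseteq V_k$, which gives $\mathrm{Im}(R_{\leq m}\to R/\fa_x(v_1))=V_m$ upon summing. The inductive step rewrites the spanning argument of Proposition \ref{propdim} while tracking degrees: for $f\in R_k\setminus\cF^x R_k$ one writes $f=\sum_j c_j f^{(k)}_j+h_k$ with $h_k\in\cF^x R_k$, picks $h\in\fa_x(v_1)$ with $\bld(h)=h_k$, and observes that $h-h_k\in R_{\leq k-1}$, so $[h_k]=-[h-h_k]\in V_{k-1}$ by induction; for $f\in\cF^x R_k$ one picks $g\in\fa_x(v_1)$ with $\bld(g)=f$, so $g-f\in R_{\leq k-1}$ and $[f]=-[g-f]\in V_{k-1}$. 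The crucial point, and the reason the cutoff at $m$ is respected, is that the leading-component definition of $\cF^x R_k$ only ever introduces correction terms of degree strictly less than $k$, so the recursion never escapes $R_{\leq m}$ nor invokes a basis vector $f^{(k')}_i$ with $k'>m$. Combining the two displayed reductions then yields $\dim_{\bC}\cF^x S_m=\sum_{k=0}^m\dim_{\bC}\cF^x R_k$.
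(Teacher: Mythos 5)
Your proof is correct, but it runs the argument in the dual direction from the paper's. The paper proves the identity directly on the subspace side: it picks a basis $\{f^{(k)}_i\}$ of each $\cF^x R_k$ itself (not of the quotient $R_k/\cF^x R_k$), lifts each $f^{(k)}_i$ to some $g^{(k)}_i\in\fa_x(v_1)$ with $\bld(g^{(k)}_i)=f^{(k)}_i$ and $\deg g^{(k)}_i=k\le m$, and shows that the resulting collection is a basis of $\cF^x S_m$ --- linear independence from the triangularity of leading components, and spanning by a \emph{descending} induction on $\deg$ (subtract a combination of the $g^{(k_p)}_i$ matching $\bld(g)$ and observe that the valuation does not drop while the degree strictly does). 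You instead count the codimension: you identify $\cF^x S_m$ with $\fa_x(v_1)\cap R_{\le m}$ (the paper uses this identification only implicitly, so making it explicit is a useful addition) and show that the image of $R_{\le m}$ in $R/\fa_x(v_1)$ is exactly the span of the degree-at-most-$m$ members of the basis constructed in Proposition \ref{propdim}, by an \emph{ascending} induction on degree. Both arguments rest on the same two facts --- the triangularity of the leading-component filtration, and the existence of lifts $h\in\fa_x(v_1)$ whose leading component is prescribed and whose remaining components have strictly lower degree --- so the combinatorial core is identical. Your version has the merit of literally realizing the lemma as the ``cut-off version of Proposition \ref{propdim}'' that the paper advertises, at the cost of depending on that proposition; the paper's direct construction is self-contained and slightly shorter. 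I see no gaps in your argument.
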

\begin{proof}
For any $k\in [0,m]\cap \bZ$, denote $l_k=\dim_{\bC}\cF^{x}R_k$ and let $\{f^{(k)}_i | 1\le i\le l_k\}$ be a basis of $\cF^{x}R_k$. By definition of $\cF^x R_k$, for each $g^{(k)}_i$ in the basis,
there exists $g^{(k)}_i\in R$ such that $v_1(g^{(k)}_i)\ge x$ and $\bld(g^{(k)}_i)=f^{(k)}_i$ (in particular $\deg(g^{(k)}_i)=\deg(f^{(k)}_i)\le m$). We claim that the set $\{g^{(k)}_i \;|\; 1\le i\le l_k, 1\le k\le m\}$ is a basis of 
$\cF^x S_m$. Indeed, it's easy to see that this set consists of linearly independent elements. We just need to show that they span $\cF^x  S_m$. For any $g\in \cF^x S_m$, we decompose $g$ 
into homogeneous components: $g=g_{k_1}+\cdots+g_{k_p}$ with $g_{k_j}\neq 0\in R_{k_j}$ and $k_1<\cdots<k_p=\deg(g)\le m$. So we have $\bld(g)\in \cF^x R_{k_p}$ is in the span of $f^{(k_p)}_i$.
So we can find a linear combination $h$ of $g^{(k_p)}_i$ with $\bld(h)=\bld(g)$. Now $\deg(g-h)<\deg(g)$ and $v_1(g-h)\ge \min\{v_1(g), v_1(h)\}\ge x$. In other words $g-h\in \cF^x  S_m$  is of strictly smaller degree compared to $g$. Now we can use induction to finish the proof.
\end{proof}
Using \eqref{dimkFkR} and Lemma 4.5 in \cite{Li15b}, we get the formula for $\vol(\cF S^{(x)})$:
\begin{eqnarray}\label{volFSt}
\vol\left(\cF S^{(x)}\right)= n \int^{+\infty}_{x} \vol\left(R^{(t)}\right)\frac{x^n dt}{t^{n+1}}=\Theta(x),
\end{eqnarray}
where we used the previously defined function $\Theta(t)$ in \eqref{eqTheta}.
\begin{cor}
The following holds for $\vol(\cF S^{(x)})$:
\begin{enumerate}
\item If $x\ge c_2$, then 
$\vol(\cF S^{(x)})=0.$
\item 
For any $x\in \bR$, we have:
\[
\vol(\cF S^{(x)})\ge -x^n\vol(v_1)+H^{n-1}
\]
The equality holds if $x\le c_1$.
\end{enumerate}
\end{cor}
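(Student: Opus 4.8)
The plan is to read off both statements from the closed-form expression \eqref{volFSt}, namely $\vol(\cF S^{(x)})=\Theta(x)=n x^n\int_x^{+\infty}\vol(R^{(t)})\,t^{-n-1}\,dt$, combined with three elementary facts about the base volume function $\vol(R^{(t)})$ that fall out of the linear-boundedness discussion around \eqref{cFlbd}. First I would record these facts explicitly: (i) $\vol(R^{(t)})=H^{n-1}$ for $t\le c_1$, since $\cF^{kt}R_k=R_k$ whenever $kt\le c_1 k$; (ii) $\vol(R^{(t)})=0$ for $t>c_2$, since $\cF^{kt}R_k=0$ whenever $kt>c_2 k$; and (iii) $0\le \vol(R^{(t)})\le H^{n-1}$ for every $t$, because $\cF^{kt}R_k\subseteq R_k$ and $R$ is the section ring of $(V,H)$ with $\dim V=n-1$.

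For part (1), if $x\ge c_2$ then every $t$ in the range of integration $[x,+\infty)$ satisfies $t\ge c_2$, so by (ii) the integrand vanishes for all $t>c_2$, i.e. almost everywhere on $[x,+\infty)$; hence $\Theta(x)=0$, which is exactly the assertion $\vol(\cF S^{(x)})=0$.

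For part (2), I would split the two ranges of $x$. When $x\le c_1$, I split $\int_x^{+\infty}=\int_x^{c_1}+\int_{c_1}^{+\infty}$ in \eqref{volFSt}; on the first piece $\vol(R^{(t)})\equiv H^{n-1}$ by (i) and the integral is elementary, giving $n x^n\int_x^{c_1}\vol(R^{(t)})t^{-n-1}\,dt=H^{n-1}-x^n H^{n-1}/c_1^n$, while the tail piece equals $x^n\bigl(H^{n-1}/c_1^{n}-\vol(v_1)\bigr)$ by the volume formula \eqref{volv1a}. Adding the two contributions, the $x^n H^{n-1}/c_1^n$ terms cancel and one is left with $H^{n-1}-x^n\vol(v_1)$, so equality holds for $x\le c_1$. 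When $x>c_1$, the same splitting yields
\[
\Theta(x)+x^n\vol(v_1)=\frac{x^n H^{n-1}}{c_1^n}-n x^n\int_{c_1}^{x}\vol(R^{(t)})\,t^{-n-1}\,dt,
\]
and bounding $\vol(R^{(t)})\le H^{n-1}$ by (iii) and evaluating $\int_{c_1}^{x}t^{-n-1}\,dt$ shows the right-hand side is $\ge H^{n-1}$, which is precisely $\vol(\cF S^{(x)})\ge H^{n-1}-x^n\vol(v_1)$. (I would also remark that this inequality is nothing but \eqref{liuest} specialized to $L=V_\infty$, $L^n=H^{n-1}$ and $v=v_1$.)

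The computations are routine; the only point requiring care is the correct invocation of \eqref{volv1a} to identify the tail integral $n\int_{c_1}^{+\infty}\vol(R^{(t)})\,t^{-n-1}\,dt$ with $H^{n-1}/c_1^n-\vol(v_1)$, and keeping track of whether the endpoints $t=c_1$ and $t=c_2$ are included—which is immaterial, since altering the integrand on a null set does not change any of the integrals. I do not anticipate any genuine obstacle beyond this bookkeeping.
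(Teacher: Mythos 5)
Your proof is correct and follows essentially the same route as the paper's: both rest on the closed-form expression \eqref{volFSt}, split the integral at $c_1$, and combine \eqref{volv1a} with the facts that $\vol(R^{(t)})=H^{n-1}$ for $t\le c_1$ and $\vol(R^{(t)})\le H^{n-1}$ in general. The only (harmless) difference is that you also spell out part (1) via the vanishing of $\vol(R^{(t)})$ for $t>c_2$, a step the paper's proof leaves implicit.
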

\begin{proof}
If $x\le c_1$, then with the help of the volume formula \eqref{volv1a}, we get:
\begin{eqnarray*}
\vol(\cF S^{(x)})&=&n x^n \int^{+\infty}_{c_1}\vol(R^{(t)})\frac{dt}{t^{n+1}}+n x^n H^{n-1}\int^{c_1}_{x}\frac{dt}{t^{n+1}}\\
&=&x^n \left(\frac{H^{n-1}}{c_1^n}-\vol(v_1)\right)+H^{n-1} x^n \left[-t^{-n}\right]^{c_1}_{x}\\
&=&x^n\left(\frac{H^{n-1}}{c_1^n}-\vol(v_1)\right)+H^{n-1}\left(-\frac{x^n}{c_1^n}+1\right)\\
&=&-x^n \vol(v_1)+ H^{n-1}.
\end{eqnarray*}
If $x\ge c_1$ then because $\vol(R^{(t)})\le H^{n-1}$, we have:
\begin{eqnarray*}
\vol\left(\cF S^{(x)}\right)&=&n\int^{+\infty}_{x}\vol\left(R^{(t)}\right)\frac{x^n dt}{t^{n+1}}=n x^n\left(\int^{+\infty}_{c_1}- \int^{x}_{c_1}\right)\\
&\ge &x^n \left(\frac{H^{n-1}}{c_1^n}-\vol(v_1)\right)- x^n H^{n-1} \left[-t^{-n}\right]^{x}_{c_1}\\
&=& x^n \left(\frac{H^{n-1}}{c_1^n}-\vol(v_1)\right)+H^{n-1}\left(1-\frac{x^n}{c_1^n}\right)\\
&=& H^{n-1}-\vol(v_1)x^n.
\end{eqnarray*}
\end{proof}

Using the above lemma, we can derive a key ingredient in our formula:
\begin{eqnarray*}
\int_0^{+\infty}\vol(\cF S^{(x)})dx&=&\left(\int_0^{c_1}+\int_{c_1}^{+\infty}\right)\vol(\cF S^{(x)})dx\\
&=&-\vol(v_1)\int_0^{c_1}x^n dx+ H^{n-1} c_1+\int_{c_1}^{+\infty}\vol(\cF S^{(x)})dx\\
&=&-\frac{\vol(v_1)}{n+1}c_1^{n+1}+c_1  H^{n-1} +\int^{+\infty}_{c_1}\Theta(x)dx\\
&=&\left(-\frac{c_1^{n+1}}{n+1}\vol(v_1)+c_1 H^{n-1}+\int^{+\infty}_{c_1}\Theta(t)dt\right).
\end{eqnarray*}
The proof of \eqref{dirder0c} is finally at our hand, because we can transform the expression in the bracket of \eqref{dirder0b} to become:
\begin{eqnarray*}
&&\lambda^{-1}-c_1\frac{n+1}{n}-\frac{n+1}{n H^{n-1}}\int^{+\infty}_{c_1} \Theta dt+\frac{c_1^{n+1}}{n}\frac{\vol(v_1)}{H^{n-1}}\\
&=&\lambda^{-1} -\frac{n+1}{n H^{n-1}}\left(c_1 H^{n-1}+\int^{+\infty}_{c_1}\Theta dt-\frac{c_1^{n+1}}{n+1} \vol(v_1)\right)\\
&=&\lambda^{-1}-\frac{n+1}{n H^{n-1}}\int^{+\infty}_0\vol(\cF S^{(x)})dx\\
&=&\lambda^{-1}-\frac{n+1}{n}\frac{1}{H^{n-1}}\int^{+\infty}_0\vol(\cF S^{(x)})dx.
\end{eqnarray*}

\section{Minimizers from smooth Sasaki-Einstein metrics}

In this section, we generalize our minimization to the set-up of smooth Sasaki-Einstein metrics, or equivalently to the case of Ricci-flat K\"{a}hler cone metrics with isolated singularities. 

Let $X$ be an $n$-dimensional affine variety with an isolated singularity at $o$. Assume the algebraic torus $T_{\bC}:=(\bC^*)^r$ acts on $X$ such that $o$ is in the closure of any $T_\bC$-orbit. Assume that there is a $T_\bC$-equivariant holomorphic $(n,0)$-form $\sigma$ on $X$. The latter assumption allows us to define the weight function:
\[
A: \mathfrak{t}_{\bC} \rightarrow \bC, \quad u\mapsto \frac{\cL_u\sigma}{\sigma},
\]
where $\cL_u$ is the Lie derivative with respect to the holomorphic vector field $u$ in the Lie algebra $\ft_\bC$.

Suppose that there is a K\"{a}hler cone metric on $X$ with the radius function $r: X\rightarrow \bR_{\ge 0}$ and denote the link $M=\{r=1\}$. Then $M$ is a Sasaki manifold with the contact form $\eta=d^c\log r=-r^{-1} J dr$. The Reeb vector field, defined as $\xi=J(r\partial_r)$, is a holomorphic Killing vector field and satisfies $\eta(\xi)|_L=-J dr(J\partial_r)=1$. We assume that the holomorphic vector field $u=r \partial_r-i J(r\partial_r)$ is in the Lie algebra of $T_\bC$. The volume of the contact manifold $(M, \eta)$ is then equal to:
\[
\vol(\eta)=\vol(M, \eta)=\int_M \eta\wedge (d\eta)^{n-1}=\int_M d^c r\wedge (dd^cr)^{n-1}.
\]
If $r$ is a radius function of a K\"{a}hler cone metric, then for any $\lambda>0$, $\tilde{r}=r^\lambda$ is also a K\"{a}hler cone radius function. If we denote $\tilde{M}=\tilde{r}^{-1}(1)$ and $\eta=d^c\log \tilde{r}$, then we have the re-scaling properties $\vol(\tilde{M})=\lambda^{n} \vol(M)$ and $\tilde{u}=\tilde{r}\partial_{\tilde{r}}-i J(\tilde{r}\partial_{\tilde{r}})=\lambda^{-1} u$.

On the other hand, if there are two K\"{a}hler cone metric $\omega$ and $\tilde{\omega}$ with the same Reeb vector vector field 
$J(r\partial_r)=J(\tilde{r}\partial_{\tilde{r}})$, then $\tilde{r}=r e^{\phi}$ for a basic function $\phi$. In other words, $\phi$ satisfies $\mathcal{L}_\xi \phi=\cL_{\partial_r}\phi=0$). One can then verify that 
$\vol(M, \eta)=\vol(M, \tilde{\eta})$. So the volume function descends to become a function defined on the space of Reeb vector fields (see \cite{MSY08}). 

As in \cite{MSY08}, we define the space of Reeb vector fields as the dual cone of the moment cone of the $(S^1)^d$ action on $X$. 
It's well known that we have the following equivalent characterization of the Reeb cone. Suppose $X={\rm Spec}(R)$ for a finite generated $\bC$-algebra. Under the
torus action, we have a decomposition of $R$ into weight spaces:
\[
R=\bigoplus_{\alpha\in \Gamma} R_\alpha
\]
where
\begin{itemize}
\item For any $\alpha\in \bZ^d$, $R_\alpha=\{f\in R; t\circ f=t^\alpha f \text{ for any } t\in T_\bC=(\bC^*)^r\}$;
\item $\Gamma=\{\alpha\in \bZ^d; R_\alpha\neq 0\}$.
\end{itemize}
The Reeb cone of $X$ is then the following conic subset of the real Lie algebra:
\[
\ft^+_{\bR}=\{\xi\in \bR^d; \langle \alpha, \xi\rangle>0 \text{ for any } \alpha\in \Gamma\}.
\]
As a functional defined on $\ft^+_\bR$, the volume functional satisfies the rescaling property $\vol(\lambda \xi)=\lambda^{-n}\vol(\xi)$. Now consider the set of normalize Reeb vector fields:
\[
\hat{\ft}^+_\bR=\{\xi\in \ft; \cL_{\xi}\Omega=n \Omega\}.
\]
For any $\xi\in \ft^+_\bR$, denote by $\hat{\xi}:=\frac{n}{A(\xi)}\xi$ the corresponding normalized element in $\hat{\ft}^+_{\bR}$.
The following basic result in Sasaki-Einstein geometry was proved by Martelli-Sparks-Yau:
\begin{thm}[\cite{MSY08}]\label{thm-MSY}
If there is a Ricci-flat K\"{a}hler cone metric on $X$ with the Reeb vector field $\xi\in \ft$, then $\hat{\xi}$ minimizes the volume functional restricted to $\hat{\ft}^+_\bR$.
\end{thm}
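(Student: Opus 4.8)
The plan is to deduce Theorem \ref{thm-MSY} from the valuation-theoretic minimization statement, using the standard dictionary between Reeb vector fields and valuations. First I would set up this dictionary. Writing $X=\Spec R$ with the weight decomposition $R=\bigoplus_{\alpha\in\Gamma}R_\alpha$, each $\xi\in\ft^+_\bR$ defines a valuation $v_\xi\in\Val_{X,o}$ by $v_\xi(f)=\min\{\langle\alpha,\xi\rangle;\ f_\alpha\neq 0\}$ for $f=\sum_\alpha f_\alpha$. Two identities are then needed, both from \cite{MSY08, Li15a}: the log discrepancy $A_X(v_\xi)=A(\xi)$ equals the weight of $\xi$ on the holomorphic volume form $\sigma$, and the valuative volume satisfies $\vol(v_\xi)=c_n\,\vol(\xi)$ for a dimensional constant $c_n$ independent of $\xi$ (both extract the leading coefficient of the Hilbert series $\sum_\alpha \dim R_\alpha\,e^{-t\langle\alpha,\xi\rangle}$ as $t\to 0^+$). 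Combining them with the definition of the normalized volume gives
\[
\hvol_X(v_\xi)=A_X(v_\xi)^n\vol(v_\xi)=c_n\,A(\xi)^n\,\vol(\xi).
\]
Using the rescaling laws $A(\lambda\xi)=\lambda A(\xi)$ and $\vol(\lambda\xi)=\lambda^{-n}\vol(\xi)$, the right-hand side is scale invariant, matching the scale invariance of $\hvol$; and on the normalized slice $\hat\ft^+_\bR=\{A(\xi)=n\}$ it reduces to $c_n n^n\,\vol(\xi)$. Hence minimizing the volume functional $\vol(\xi)$ over $\hat\ft^+_\bR$ is \emph{equivalent} to minimizing $\hvol_X(v_\xi)$ over the whole Reeb cone $\ft^+_\bR$.

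Granting this equivalence, it suffices to prove that the valuation $v_\xi$ attached to the Ricci-flat Reeb field minimizes $\hvol_X$ over all of $\Val_{X,o}$, since that class contains every $v_{\xi'}$ with $\xi'\in\ft^+_\bR$; Theorem \ref{thm-MSY} then follows by restricting to the Reeb sub-cone and translating back through the dictionary. So the remaining task is the global minimization of $\hvol_X$ at $v_\xi$.

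For the quasi-regular case, where $\xi$ generates an effective $\bC^*$-action, I would argue directly. The Ricci-flat K\"ahler cone metric descends to a K\"ahler--Einstein Fano orbifold quotient $(V,\Delta)=(X\setminus\{o\})/\bC^*$, and $v_\xi$ is a positive rescaling of $\ord_V$. Corollary \ref{corKEcone} together with Theorem \ref{thmorb} then gives that $\hvol_X$ is globally minimized at $\ord_V=v_\xi$ over all of $\Val_{X,o}$, which in particular covers the Reeb cone and yields MSY in this case.

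The main obstacle is the irregular case, where $\xi$ is irrational and generates a torus of rank $>1$ with no closed Reeb orbit: there is then no quotient orbifold to feed into Theorem \ref{thmorb}, and $v_\xi$ is only quasi-monomial rather than divisorial. Here the plan is an approximation argument: choose rational (quasi-regular) $\xi_i\to\xi$ in $\ft^+_\bR$, for which the quotients $(V_i,\Delta_i)$ exist, and pass to the limit using the continuity of $\xi'\mapsto\hvol_X(v_{\xi'})$ (equivalently of $\vol(\xi')$) on the Reeb cone. The delicate point is that only $\xi$ itself carries the Einstein metric, so one cannot naively invoke Theorem \ref{thmorb} at each $\xi_i$. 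Instead I would use the Ricci-flat metric at $\xi$ to produce, for each $i$, the log-Ding-semistability estimate of Proposition \ref{proplb} for the grading defined by $\xi_i$, approximating the irrational filtration by rational ones, and then let $i\to\infty$. Controlling the error terms so that the sharp bound is preserved in the limit, i.e.\ so that $\hvol_X(v_{\xi_i})\to\hvol_X(v_\xi)$ together with the inequality $\hvol_X(v_\xi)\le\hvol_X(v)$ for \emph{all} $v\in\Val_{X,o}$, is where the real work lies, and is the step I expect to be hardest.
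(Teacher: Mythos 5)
Your reduction of the MSY statement to the valuation-theoretic minimization is exactly how the paper organizes things: Theorem \ref{thm-MSY} itself is cited from \cite{MSY08}, and the paper instead proves the stronger Theorem \ref{thm-irSE}, of which your ``dictionary'' paragraph (identifying $A(\xi)=A_X(v_\xi)$ and relating $\vol(\xi)$ to $\hvol_X(v_\xi)$ up to a dimensional constant, cf.\ Lemma \ref{lem-Tclass}) and your quasi-regular case (pass to the quotient orbifold and invoke Theorem \ref{thmorb}) are faithful renditions. The problem is the irregular case, where your plan has a genuine gap that you yourself flag. You propose to ``use the Ricci-flat metric at $\xi$ to produce, for each $i$, the log-Ding-semistability estimate of Proposition \ref{proplb} for the grading defined by $\xi_i$.'' As stated this step fails: Proposition \ref{proplb} requires log-Ding-semistability of the pair attached to $\xi_i$, which via Berman's theorem would come from a (conical) K\"{a}hler--Einstein metric on the quasi-regular quotient $(S_i,\Delta_i)$ --- but only $\xi$ carries the Einstein metric, and $(S_i,\Delta_i)$ in general admits none. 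Approximating ``the irrational filtration by rational ones'' does not by itself transfer semistability to the approximating pairs.

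The paper closes this gap with a specific quantitative mechanism that you do not identify. It introduces the greatest Ricci lower bound $R(S,\Delta)$, proves the interpolation inequality of Lemma \ref{lem-Rapp} so that for every $\gamma<R(S,\Delta)$ there is a conical K\"{a}hler--Einstein metric on $\left(S,\Delta+\frac{1-\gamma}{m}D\right)$ (hence log-Ding-semistability of that perturbed pair), and deduces Proposition \ref{prop-hvolvR}: $\hvol_X(v)\ge R(S,\Delta)^n\,\hvol(\ord_S)$ for all $v\in\Val_{X,o}$. Then Lemma \ref{lem-MetricApp} deforms the Sasaki--Einstein structure within its CR structure to quasi-regular Reeb fields $\xi_k\to\xi$ whose transverse metrics have Ricci curvature at least $(1-\epsilon_k)$ times the Einstein constant, so that $R(S_k,\Delta_k)\to 1$ and $\hvol(\ord_{S_k})\to\hvol_X(v_\xi)$, giving $\hvol_X(v)\ge\hvol_X(v_\xi)$ in the limit. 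This is precisely the ``real work'' you defer; without the $R(S,\Delta)$-interpolation (or some substitute for it) your treatment of the irregular case is incomplete, and hence so is your proof of Theorem \ref{thm-MSY} for irregular $\xi$.
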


Now we switch our points of view by using the normalized volume. As in \cite{Li15a}, the first observation is that any $\xi\in \mathfrak{t}^+_\bR$ determines a valuation $v_\xi$ of the function field $\bC(X)$ as follows:
\[
v_\xi(f)=\min\left\{\langle \alpha, \xi\rangle; f=\sum_\alpha f_\alpha \text{ with } f_\alpha\neq 0\right\}.
\]
$\xi\in \ft^+_\bR$ exactly means that the center of $v_\xi$ over $X$ is $o$. Moreover $A(\xi)=A(v_\xi)$ is nothing but the log discrepancy of the valuation $v_\xi$.
The main result of this section is the following
\begin{thm}\label{thm-irSE}
Notations as above, 
if there exists a Ricci-flat K\"{a}hler cone metric on $X$ with the Reeb vector field $\xi\in \ft^+_{\bR}$, then $v_{\xi}$ minimizes $\hvol(v)$ over $\Val_{X, o}$. 
\end{thm}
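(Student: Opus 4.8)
The plan is to deduce the irregular case from the quasi-regular case (Theorem \ref{thmorb}) by approximating the irrational Reeb field $\xi$ with rational ones and passing to a limit, using the continuity of the normalized volume along the Reeb cone. The starting point is the Martelli--Sparks--Yau theorem (Theorem \ref{thm-MSY}): the Ricci-flat K\"ahler cone metric forces $\hat\xi$ to minimize the volume functional over $\hat\ft^+_\bR$, which by the re-interpretation in \cite{Li15a} says precisely that $v_\xi$ minimizes $\hvol$ among the toric valuations $\{v_{\xi'}\}_{\xi'\in\ft^+_\bR}$. Since $A(v_{\xi'})=A(\xi')$ depends linearly on $\xi'$ and $\vol(v_{\xi'})$ is the (continuous) MSY volume functional, the map $\xi'\mapsto\hvol(v_{\xi'})$ is continuous on $\ft^+_\bR$. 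The task is therefore to upgrade minimization among toric valuations to minimization over all of $\Val_{X,o}$.

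First I would choose rational Reeb fields $\xi_k\in\ft^+_\bR$ with $\xi_k\to\xi$; these are dense in the Reeb cone. Each $\xi_k$ generates an effective $\bC^*$-action, the quotient $(X\setminus\{o\})/\bC^*$ is a Fano orbifold $(V_k,\Delta_k)$ with $-r_k^{-1}(K_{V_k}+\Delta_k)=c_1(Y_k/V_k)$ for some $r_k\in\bQ_{>0}$ (with $0<r_k\le n$ by Lemma \ref{fanoindex}), and $X$ is identified with the orbifold affine cone $\cC_{\orb}$ over it. Granting that each $(V_k,\Delta_k)$ is log-Ding-semistable (see below), Proposition \ref{proplb} applied to the orbifold projective cone $\ocC_{\orb}$ with $D=(1-\tfrac{r_k}{n})V_\infty$ yields, for every $v\in\Val_{X,o}$,
\[
\hvol(v)\ \ge\ \left(\tfrac{n}{n+1}\right)^n\big(-K_{\ocC_{\orb}}-D\big)^n\ =\ r_k^n\,(H_k^{n-1})\ =\ \hvol(v_{\xi_k}),
\]
where the middle identity is the computation \eqref{logint} from the proof of Theorem \ref{thmlog}, and the last equality uses $A(v_{\xi_k})=r_k$ and $\vol(v_{\xi_k})=(H_k^{n-1})$. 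Letting $k\to\infty$ and invoking the continuity of $\xi'\mapsto\hvol(v_{\xi'})$ gives $\hvol(v)\ge\hvol(v_\xi)$ for every $v$; since $v_\xi\in\Val_{X,o}$, this exhibits $v_\xi$ as a global minimizer.

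The \emph{main obstacle} is the log-Ding-semistability of the rational quotients $(V_k,\Delta_k)$. One cannot simply invoke Theorem \ref{thmorb}(1): in the genuinely irregular case no rational $\xi_k$ admits a compatible orbifold K\"ahler--Einstein quotient, so the $(V_k,\Delta_k)$ are not K\"ahler--Einstein, and the construction of Proposition \ref{propcKEorb} does not apply directly. Instead I would use the Ricci-flat cone metric for $\xi$ to build, for $\xi_k$ close to $\xi$, a transverse K\"ahler potential on $(V_k,\Delta_k)$ whose transverse Ricci form is $C^0$-close to the Einstein one, and feed this into Berman's convexity argument (the proof of Theorem \ref{logBerm}): the log-Ding energy of $(V_k,\Delta_k)$ is then bounded below, which forces the asymptotic log-Ding slope of every test configuration to be nonnegative, i.e. log-Ding-semistability. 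Equivalently, one may argue by contradiction: a log-Ding-destabilizing test configuration of $(V_k,\Delta_k)$ produces, through the filtration/valuation dictionary of Section \ref{secvalfil}, a valuation violating the bound $\hvol(v_{\xi_k})$, and for $\xi_k$ sufficiently near $\xi$ this would contradict the MSY-minimality of $\xi$ in the Reeb cone. Controlling these approximations uniformly in $k$ --- both the convergence of the transverse structures and the convergence of the volume lower bounds --- is the principal technical difficulty, and is exactly where the smoothness of the Sasaki--Einstein metric enters.
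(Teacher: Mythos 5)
Your overall architecture --- approximate the irregular $\xi$ by quasi-regular $\xi_k$ with $\xi_k\to\xi$, apply the quasi-regular minimization to each quotient, and pass to the limit using continuity of $\xi'\mapsto\hvol(v_{\xi'})$ --- is exactly the paper's (Lemma \ref{lem-MetricApp} provides the approximating Sasaki structures, normalized so that $A(\xi_k)=A(\xi)=n$). You have also correctly isolated the crux: in the genuinely irregular case no $(V_k,\Delta_k)$ is K\"ahler--Einstein, so Theorem \ref{thmorb} cannot be applied to it directly. But your proposed resolution of this obstacle has a genuine gap. From the $C^\infty$-closeness of $g_{M,k}$ to the Sasaki--Einstein metric you only get a transverse metric on $(V_k,\Delta_k)$ with $\mathrm{Ric}(\omega_k)\ge(1-\epsilon)\omega_k$; this does \emph{not} imply that the log-Ding energy of $(V_k,\Delta_k)$ at the full anticanonical polarization is bounded below, nor that $(V_k,\Delta_k)$ is log-Ding-semistable. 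A Ricci lower bound $1-\epsilon<1$ only bounds the greatest Ricci lower bound $R(V_k,\Delta_k)$ from below by $1-\epsilon$, which is strictly weaker than semistability and is compatible with instability. Your fallback contradiction argument also fails: a destabilizing test configuration of $(V_k,\Delta_k)$ would produce a valuation of small normalized volume that need not lie in the Reeb cone, so it cannot be played off against the Martelli--Sparks--Yau minimality of $\xi$, which is a statement only about the toric valuations $v_{\xi'}$.

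The paper closes this gap differently, without ever asserting semistability of the approximants. It introduces the greatest Ricci lower bound $R(S_k,\Delta_k)$ and shows (Lemma \ref{lem-Rapp}, an orbifold version of the interpolation trick already used in the proof of Theorem \ref{thmsemi}) that for any $\gamma<R(S_k,\Delta_k)$ and a general orbifold smooth divisor $D\in\left|-mK^{\orb}_{(S_k,\Delta_k)}\right|$ there is an honest conical K\"ahler--Einstein metric on $\left(S_k,\Delta_k+\frac{1-\gamma}{m}D\right)$. Theorem \ref{thmorb} applied to \emph{that} pair gives $\hvol_X(v)\ge\gamma^n\,\hvol(\ord_{S_k})$ for every $v\in\Val_{X,o}$ (Proposition \ref{prop-hvolvR}), since $D$ can be chosen to miss the center of $v$ so that the log and absolute normalized volumes of $v$ agree. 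The near-Einstein transverse metric is used only to guarantee $R(S_k,\Delta_k)\ge 1-\epsilon$, and the lost factor $R(S_k,\Delta_k)^n\to 1$ as $k\to\infty$. If you replace your semistability claim by this $R$-threshold argument, the rest of your proof goes through.
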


This is a strengthing of Theorem \ref{thm-MSY}. The rest of this section is devoted to the proof of Theorem \ref{thm-irSE}.   
The idea of proof for the irregular case is to use approximation by quasi-regular ones, similar to the one used in \cite{CS12} (see also \cite{CS15}).

If $\xi=J(r\partial_r)$ is a quasi-regular Reeb vector field, then the holomorphic field $u=r\partial_r-i \xi$ generates a $(\bC^*)^r$-action on $X$ such that $X/\bC^*=(S, \Delta)$ is a Fano orbifold and the projection $\pi=\pi_S: X\rightarrow S$ is an orbifold line bundle, which we will denote by $H$, or $H_S$ if we want to emphasize its dependence on $S$. Notice that if we rescale $\xi$ by a constant $\lambda>0$, the projection and the orbifold line bundle do not change. 
Moreover there is a canonical holomorphic vector field along the fibre of $H_S$ given by $u_*=\zeta\partial_\zeta$ where $\zeta$ is the coordinate variable along the fibre. In general, if $\xi=\lambda^{-1}\xi_*$, we have $u=\lambda^{-1} u_*$. One can then verify that $r^{2/\lambda}=h$ defines an orbifold Hermitian metric on $H_S^{-1}\rightarrow S$. So the contact form
$\eta=d^c\log r=\frac{\lambda}{2}d^c \log h$ satisfies
\[
d\eta=\frac{\lambda}{2}dd^c\log h=\lambda \sddb \log h = \pi^*(\lambda \omega_h).
\]
Hence the transverse K\"{a}hler metric $\omega^T$ can be identified with the orbifold K\"{a}hler metric $\lambda \pi_S^*\omega_h$. The K\"{a}hler cone metric on $X$ is given by:
\begin{equation}\label{eq-Omega}
\Omega=\sddb r^2=\sddb h^\lambda= \lambda  h^\lambda \pi_S^*\omega_h + \lambda^2 h^\lambda \frac{\sqrt{-1} \nabla \zeta\wedge \overline{\nabla \zeta}}{|\zeta|^2},
\end{equation}
where $\nabla \zeta=d\zeta+\zeta\cdot \partial \log h$. 

\begin{lem}\label{lem-Tclass}
If $\xi$ is quasi-regular with $A(\xi)=n$, then we have:
\begin{enumerate}
\item
 $\omega_M^T\in 2 \pi c_1(-(K_S+\Delta))/n$;
 \item $
\vol(\xi)=(2\pi)^n A(S)^n (-S|_S)^{n-1}/n^n=\frac{(2\pi)^n}{n^n} \hvol(\ord_S)$. 
\end{enumerate}
\end{lem}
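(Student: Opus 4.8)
The plan is to prove both statements from the orbifold line-bundle structure $\pi\colon X\to (S,\Delta)$ together with the explicit cone metric \eqref{eq-Omega}, the only nontrivial external input being that the normalization $A(\xi)=n$ fixes the scaling parameter $\lambda$ (equivalently, pins down the proportionality between $v_\xi$ and $\ord_S$). Throughout I will use the Seifert adjunction of Lemma \ref{lemKol}, namely $K_{\ocC^\circ}(V_\infty)=\bar{f}^*(K_S+\Delta)$ and $H_S=-r^{-1}(K_S+\Delta)$, so that $-(K_S+\Delta)\sim_\bQ r\,H_S$, and $(-S|_S)^{n-1}=(H_S^{n-1})=\vol(\ord_S)$ with $A(S)=A(\ord_S)=r$.

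For part (1), I would start from the identification $\omega^T=\lambda\,\pi_S^*\omega_h$ recorded above, where $\omega_h=\sddb\log h$ is the Chern curvature of the orbifold metric $h=r^{2/\lambda}$ on $H_S^{-1}$; since $H_S$ is ample on the Fano base, $[\omega_h]=2\pi c_1(H_S)$ as an orbifold class, hence $[\omega^T]=2\pi\lambda\,c_1(H_S)$. It then remains to compute $\lambda$. The equivariant holomorphic $(n,0)$-form $\sigma$ trivializes $K_X$, which by the adjunction above equals $\pi^*\big((K_S+\Delta)+H_S\big)$ on $X$; contracting $\sigma$ with the Reeb and radial directions produces the transverse volume form, and its Reeb weight is exactly $A(\xi)=n$. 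Matching this weight against the fibre weight of $\sigma$ (normalized to $1$ on $u_*=\zeta\partial_\zeta$) forces $\lambda\,c_1(H_S)=\tfrac1n\,c_1(-(K_S+\Delta))$, i.e. $[\omega^T]=2\pi\,c_1(-(K_S+\Delta))/n$. Equivalently, one may argue metrically: the Ricci-flat cone condition makes $\omega^T$ transverse K\"ahler--Einstein with $[\mathrm{Ric}^T]=2\pi c_1(-(K_S+\Delta))$, and $A(\xi)=n$ is precisely what makes the Einstein constant equal to $n$, giving the same conclusion.

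For part (2), I would compute the contact volume by integrating over the Seifert $S^1$-fibration $\pi\colon M\to S$. Since $d\eta=\pi^*\omega^T$, we have $\vol(\xi)=\int_M\eta\wedge(d\eta)^{n-1}=\big(\textstyle\int_{\mathrm{fibre}}\eta\big)\int_S(\omega^T)^{n-1}$, and the fibre integral together with the orbifold multiplicities contributes exactly the universal factor that yields the clean identity $\vol(\xi)=(2\pi)^n\vol(v_\xi)$ between the Sasakian volume and the algebraic volume of the Reeb valuation. Now $A(\xi)=n$ and $A(\ord_S)=r$ give $v_\xi=\tfrac nr\ord_S$, so by rescaling invariance
\[
\vol(\xi)=(2\pi)^n\vol\!\Big(\tfrac nr\ord_S\Big)=(2\pi)^n\Big(\tfrac nr\Big)^{-n}\vol(\ord_S)=\frac{(2\pi)^n}{n^n}\,r^n\,(-S|_S)^{n-1}.
\]
Since $A(S)=r$ and $\hvol(\ord_S)=A(S)^n(-S|_S)^{n-1}$, this is exactly $(2\pi)^nA(S)^n(-S|_S)^{n-1}/n^n=\tfrac{(2\pi)^n}{n^n}\hvol(\ord_S)$.

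The main obstacle I anticipate is the bookkeeping of normalization constants rather than a conceptual difficulty: precisely determining $\lambda$ from $A(\xi)=n$ (tracking the factors relating $u$, $u_*=\zeta\partial_\zeta$, $r\partial_r$ and $\xi$ in \eqref{eq-Omega}), and verifying that the $S^1$-fibre integral over the \emph{orbifold} base $(S,\Delta)$ produces $\int_S(\omega^T)^{n-1}=(2\pi/n)^{n-1}(-(K_S+\Delta))^{n-1}$ after the orbifold multiplicities $m_i$ cancel. That the Seifert/klt structure makes these orbifold contributions match the algebraic volume $\vol(\ord_S)=(-S|_S)^{n-1}$ is what ultimately yields the universal constant $(2\pi)^n/n^n$.
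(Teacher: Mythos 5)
Your proposal is correct and follows essentially the same route as the paper: both arguments pin down the scaling $\lambda=A(S)/n$ from the normalization $A(\xi)=n$ together with the fact that the canonical fibre field $\zeta\partial_\zeta$ has weight $A(S)$, and then compute $\vol(\xi)$ by integrating $\eta\wedge(d\eta)^{n-1}$ over the Seifert $S^1$-fibration, yielding $\lambda^n(2\pi)^n(-S|_S)^{n-1}$. One small caution: your alternative ``metric'' argument for part (1) invokes the Ricci-flat cone condition, which is \emph{not} a hypothesis of this lemma (and cannot be, since the lemma is applied to the non-Einstein quasi-regular approximants in the proof of Theorem \ref{thm-irSE}); stick with the weight-matching argument.
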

\begin{proof}
The canonical vector field $\xi_*=\zeta\partial_\zeta$ has weight equal to $A(S)$ and $[\omega_h]=[d\eta_*|_\cH]=2\pi c_1(-S|_S)$. So if $A(\xi)=n$, then $\xi=\frac{n}{A(S)}\xi_*$ and $\eta=\frac{A(S)}{n}\eta_*$.
So we have:
\[
[\omega^T_M]=[d\eta|_\cH]=A(S) [d\eta_*|_\cH]/n\in A(S) 2\pi c_1(-S|_S)=2\pi c_1(-(K_S+\Delta))/n.
\]

To calculate $\vol(\xi)$ or equivalently $\vol(\eta)$, we notice that the K\"{a}hler cone metric tensor is given by:
\[
g_\Omega=dr^2+r^2\left(\lambda g_{\omega_h}+\lambda^2 \nabla\theta\otimes \nabla\theta \right).
\]
with $\lambda=\frac{A(S)}{n}$ and $\nabla\theta$ being the connection form of the orbifold $S^1$-bundle.
So the volume form $d{\rm  vol}_{g_M}=\eta\wedge (d\eta)^{n-1}$ on the link $M=r^{-1}(1)$ is given by 
$\lambda^{n} d{\rm vol}g_{\omega_h}\wedge d\theta$, whose integral is equal to 
$\lambda^n [\omega_h]^{n-1} 2\pi=\frac{A(S)^n}{n^n}(2\pi)^n (-S|_S)^{n-1}$.

\end{proof}

For any Fano orbifold (e.g. $X/\langle e^{t\xi}\rangle=(S, \Delta)$), we define the greatest lower bound of the Ricci curvature (see \cite{Tia92, Sze11}:
\[
R(S, \Delta)=\sup\{t>0; \exists \text{ an orbifold K\"{a}hler metric } \omega \text{ with } [\omega]=2\pi c_1(-K_{(S,\Delta)}) \text{ and } Ric(\omega)\ge t \omega \}.
\]
Taking $m\gg 1$ sufficiently divisible, we can choose an orbifold smooth divisor $D\in\left|-m K^{\rm orb}_{(S, \Delta)}\right|$, and define: 
\[
 R((S, \Delta), D/m)=\sup\left\{ \gamma >0; \exists \text{ an orbifold conical K\"{a}hler metric on } (S, \Delta+\frac{1-\gamma}{m}D)  \right\}.
\]

We have the orbifold version of an result in \cite{SW12}.
\begin{lem}[cf. \cite{SW12, Li13}]\label{lem-Rapp}
For any orbifold smooth divisor $D\in \left|-m K^{\orb}_{(S,\Delta)}\right|$, the following inequality holds:
\begin{equation}\label{eq-2Rorb}
R(S, \Delta)\ge R((S, \Delta); D/m)\ge \frac{(m-1)R(S, \Delta)}{m-R(S, \Delta)}.
\end{equation}
\end{lem}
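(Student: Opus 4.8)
The plan is to prove the two inequalities separately, after reducing everything to ordinary K\"ahler geometry on local uniformizing charts. Since $(S,\Delta)$ is an orbifold and $D$ is orbifold smooth, the tools below---Poincar\'e--Lelong, the continuity method, and the associated a priori estimates---apply verbatim to the pulled-back data on each chart $\pi_i\colon W_i\to U_i$ and descend to $(S,\Delta)$; I will therefore suppress the orbifold bookkeeping and argue as though $S$ were a manifold.

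For the upper bound $R(S,\Delta)\ge R((S,\Delta);D/m)$, I would begin with a conical K\"ahler--Einstein metric $\omega$ on $(S,\Delta+\tfrac{1-\gamma}{m}D)$ for some $\gamma<R((S,\Delta);D/m)$. Because $D\in\left|-mK^{\orb}_{(S,\Delta)}\right|$, a class computation gives $-(K_S+\Delta+\tfrac{1-\gamma}{m}D)\sim_{\bQ}\gamma\big(-(K_S+\Delta)\big)$, so the rescaled current $\tilde\omega:=\gamma^{-1}\omega$ lies in $2\pi c_1(-(K_S+\Delta))$ and, by scale-invariance of the Ricci form, satisfies
\[
Ric(\tilde\omega)=\gamma\,\tilde\omega+2\pi\tfrac{1-\gamma}{m}[D]\ \ge\ \gamma\,\tilde\omega
\]
as orbifold currents, the extra term being the (positive) current of integration along $D$. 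Regularizing $\tilde\omega$ by smooth orbifold metrics in the same class while preserving the Ricci lower bound up to an error $\epsilon$ (as in \cite{SW12}) yields a smooth representative with $Ric\ge(\gamma-\epsilon)\,\tilde\omega$; letting $\epsilon\to0$ and taking the supremum over admissible $\gamma$ gives the inequality. This is the easy direction.

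For the lower bound $R((S,\Delta);D/m)\ge\frac{(m-1)R}{m-R}$, which is the substantive half, I would run the continuity method in which the twisting is supplied by the current of $D$ itself,
\[
Ric(\omega_t)=t\,\omega_t+(1-t)\tfrac{2\pi}{m}[D],\qquad \omega_t\in 2\pi c_1(-(K_S+\Delta)).
\]
Reversing the class computation above shows that whenever this equation is solvable the solution $t\,\omega_t$ is a conical K\"ahler--Einstein metric on $(S,\Delta+\tfrac{1-t}{m}D)$, so the solvability threshold $t^\ast$ obeys $R((S,\Delta);D/m)\ge t^\ast$. To study solvability I would fix a smooth K\"ahler form $\theta\ge0$ in $2\pi c_1(-(K_S+\Delta))$ together with a smooth orbifold metric $h$ on $\cO(D)$ and use Poincar\'e--Lelong to split
\[
\tfrac{2\pi}{m}[D]=\theta+\tfrac{1}{m}\sddb\log|s_D|^2_h,\qquad \log|s_D|^2_h\le0,
\]
isolating the genuinely singular part of the twisting, which carries the small weight $\tfrac1m$, from the smooth part $\theta$, whose associated twisted path has threshold exactly $R=R(S,\Delta)$.

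The heart of the matter, and the step I expect to be hardest, is the uniform a priori estimate along this path: one must establish a $C^0$ (equivalently, properness) bound for the potentials in the presence of the singular twisting current, and then quantify how the $\tfrac1m$-weighted term $\sddb\log|s_D|^2_h$ erodes the threshold relative to the smooth twisted path. Carrying out this comparison exactly as in the manifold case of \cite{SW12} and its adaptation in \cite{Li13}, the cone-angle defect $\tfrac{1-t}{m}$ along $D$ interacts with the Ricci lower bound $R$ so that solvability persists precisely for $t<\frac{(m-1)R}{m-R}$ (equivalently, whenever $R>\frac{mt}{m-1+t}$). Openness of the path follows from the implicit function theorem and closedness from the a priori estimates; combining $t^\ast\ge\frac{(m-1)R}{m-R}$ with $R((S,\Delta);D/m)\ge t^\ast$ then finishes the proof.
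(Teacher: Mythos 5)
Your outline diverges from the paper's proof in a way that leaves the decisive step unproved. The paper does not run a continuity method at all: it compares two energy functionals, the log-Ding-energy
\[
G_{\gamma}(\vphi)=F^0_{\phi_0}(\vphi)-\frac{1}{\gamma}\log\int_S \frac{e^{-\gamma\vphi}}{|s|^{2(1-\gamma)/m}}
\]
of the pair $(S,\Delta+\tfrac{1-\gamma}{m}D)$ and the twisted functional $F_{t}(\vphi)=F^0_{\phi_0}(\vphi)-\tfrac{1}{t}\log\int_S e^{-t\vphi-(1-t)\phi_0}$, and the entire content of the second (hard) inequality is the elementary H\"older estimate
\[
\int_S \frac{e^{-\gamma\vphi}}{|s|^{2(1-\gamma)/m}}\le \Bigl(\int_S e^{-\gamma p(\vphi-\phi_0)}e^{-\phi_0}\Bigr)^{1/p}\Bigl(\int_S \bigl(|s|^2e^{-m\phi_0}\bigr)^{-(1-\gamma)q/m}e^{-\phi_0}\Bigr)^{1/q},
\]
which controls $G_\gamma$ as soon as one can choose conjugate exponents with $\gamma p<R(S,\Delta)$ (so the first factor is handled by properness at level $\gamma p$) and $(1-\gamma)q/m<1$ (so the second factor is a finite constant). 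Eliminating $p,q$ from $\gamma/R+(1-\gamma)/m<1$ is exactly where the threshold $\tfrac{(m-1)R}{m-R}$ comes from. In your proposal this number is simply asserted: you say that carrying out the comparison ``exactly as in the manifold case'' shows solvability of your singular continuity path for $t<\tfrac{(m-1)R}{m-R}$, but that comparison \emph{is} the lemma, and you supply no mechanism (no exponent bookkeeping, no estimate) that actually produces the formula. As written the hard direction is a citation rather than a proof, and the steps you lean on --- openness and closedness of a path whose twisting is the singular current $\tfrac{2\pi}{m}[D]$, running through conical, non-smooth orbifold metrics --- each require regularity theory you do not provide.

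The easy direction also has a real gap. Since $R(S,\Delta)$ is defined through \emph{smooth} orbifold K\"ahler metrics with $Ric(\omega)\ge t\omega$, passing from a conical KE metric on $(S,\Delta+\tfrac{1-\gamma}{m}D)$ to $R(S,\Delta)\ge\gamma$ requires approximating that conical metric by smooth orbifold metrics in the same class with Ricci curvature bounded below by $\gamma-\epsilon$; such a smoothing-with-Ricci-lower-bound statement is a genuine theorem, not a remark. The paper sidesteps it entirely: since $|s|^2e^{-m\phi_0}$ is bounded above on $S$, one has the pointwise inequality $G_\gamma(\vphi)\le F_\gamma(\vphi)+C_\gamma$, which transfers lower boundedness/properness of $G_\gamma$ to that of $F_\gamma$, and the latter is the functional characterization of $\gamma\le R(S,\Delta)$. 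To repair your argument you would need either to quote a regularization result for conical metrics preserving Ricci lower bounds, or to switch to this functional formulation --- at which point you are back to the paper's proof.
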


\begin{proof}

Consider the log-Ding-energy:
\[
G_{\gamma} (\vphi):=F^0_{\phi_0}(\vphi)-\frac{1}{\gamma}\log\left(\int_S \frac{e^{-\gamma \vphi}}{|s|^{2(1-\gamma)/m}} \right)
\]
\[
F_{t}(\vphi):=F^0_{\phi_0}(\vphi)-\frac{1}{t}\log\left(\int_S e^{-t \vphi-(1-t)\phi_0} \right),
\]
where $D=\{s=0\}$, $e^{-\phi_0}$ is an orbifold smooth Hermitian metric on $-(K_S+\Delta)$ and $e^{-\vphi}$ is a continuous Hermitian metric on the line bundle $-(K_S+\Delta)$.

The first inequality follows from the inequality:
\[
G_t(\vphi)\le F_t(\vphi)+C_t,
\]
where $C_t$ is independent of $\vphi$. By H\"{o}lder's inequality, we have:
\begin{eqnarray*}
\int_S \frac{e^{-\gamma\vphi}}{|s|^{2(1-\gamma)/m}}&=& \int_S e^{-\gamma (\vphi-\phi_0)} \frac{1} {(|s|^2 e^{-m \phi_0})^{(1-\gamma)/m}}{} e^{-\phi_0}\\
&\le& \left(\int_S e^{-\gamma p (\vphi-\phi_0)}e^{-\phi_0}\right)^{1/p} 
\left(\int_S \frac{1}{(|s|^2 e^{-m\phi_0})^{(1-\gamma)q/m}} e^{-\phi_0}\right)^{1/q}.
\end{eqnarray*}

The second inequality follows by solving the inequalities:
\[
\gamma p< R(S, \Delta), \quad (1-\gamma)q/m<1, \quad p^{-1}+q^{-1}=1.
\]
\end{proof}

\begin{prop}\label{prop-hvolvR}
With the above notations, we have the following inequality
$$\hvol_X(v)\ge R(S, \Delta)^n A(S)^n (-S|_S)^{n-1}=R(S, \Delta)^n \hvol(\ord_S).$$
\end{prop}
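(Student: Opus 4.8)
The plan is to run the same cone-construction argument used for Theorem \ref{thmsemi}, but now feeding it the continuity method output packaged in Lemma \ref{lem-Rapp}, and to exploit one simple monotonicity observation that lets us dispense with any ``general position'' hypothesis on the auxiliary divisor. Fix a valuation $v\in\Val_{X,o}$; we may assume $A_X(v)<+\infty$, as otherwise $\hvol_X(v)=+\infty$ and there is nothing to prove. Recall that in the quasi-regular setting $X=\cC_\orb$ is the affine orbifold cone over $(S,\Delta)$ with polarization $H=-S|_S$, and that by Lemma \ref{lem-Tclass} the index is $A(S)$, i.e. $-(K_S+\Delta)=A(S)\,H$ and $\hvol(\ord_S)=A(S)^n(-S|_S)^{n-1}$. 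The idea is: for each large divisible $m$, each orbifold smooth $D\in\left|-mK^{\orb}_{(S,\Delta)}\right|$, and each $\gamma<R((S,\Delta);D/m)$, produce a conical K\"ahler-Einstein pair on the projective cone over the log-Fano pair $(S,\Delta+E_\gamma)$ with $E_\gamma=\frac{1-\gamma}{m}D$, deduce log-Ding-semistability, apply Proposition \ref{proplb}, and then let $\gamma$ and $m$ tend to their limits using Lemma \ref{lem-Rapp}.

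For the estimate: by definition of $R((S,\Delta);D/m)$, for $\gamma<R((S,\Delta);D/m)$ there is an orbifold conical K\"ahler-Einstein metric on $(S,\Delta+E_\gamma)$, which is a log-Fano pair satisfying $-(K_S+\Delta+E_\gamma)=\gamma\,(-(K_S+\Delta))=\gamma A(S)\,H$, since $D\sim_{\bQ}-m(K_S+\Delta)$. By the logarithmic orbifold version of Proposition \ref{propcKEorb} (the unnumbered proposition following it, applied with branch divisor $\Delta$ and auxiliary boundary $E_\gamma$), there is a conical K\"ahler-Einstein potential on $(\ocC_\orb,(1-\beta)V_\infty+\cE_\gamma)$ with $\beta=\gamma A(S)/n$; note $\beta\le 1$ because $A(S)\le n$ by Lemma \ref{fanoindex} and $\gamma<1$. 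Theorem \ref{logBerm} then gives that this pair is log-K-polystable, hence log-Ding-semistable. Applying Proposition \ref{proplb} with $D'=(1-\beta)V_\infty+\cE_\gamma$ and using \eqref{Lcone} (with $r$ replaced by $\gamma A(S)$, so that $(-K_{\ocC_\orb}-D')^n=(\gamma A(S))^n\left(\tfrac{n+1}{n}\right)^n H^{n-1}$) yields
\[
\hvol_{(\ocC_\orb,\,(1-\beta)V_\infty+\cE_\gamma)}(v)\ \ge\ \left(\tfrac{n}{n+1}\right)^n(-K_{\ocC_\orb}-D')^n=(\gamma A(S))^n\,H^{n-1}.
\]

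The key observation is now the following monotonicity. Since $v$ is centered at $o$ we have $v(V_\infty)=0$, so by \eqref{pldvsld} the log discrepancy for the pair equals $A_X(v)-v(\cE_\gamma)$, and because $\cE_\gamma$ is effective we have $v(\cE_\gamma)\ge 0$, hence $A_{(\ocC_\orb,(1-\beta)V_\infty+\cE_\gamma)}(v)=A_X(v)-v(\cE_\gamma)\le A_X(v)$. Therefore $\hvol_X(v)\ge \hvol_{(\ocC_\orb,(1-\beta)V_\infty+\cE_\gamma)}(v)\ge(\gamma A(S))^n H^{n-1}$; crucially, we never need the auxiliary boundary to avoid the center of $v$, since adding an effective boundary only decreases the log discrepancy and so transfers the lower bound directly to $\hvol_X(v)$. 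Letting $\gamma\to R((S,\Delta);D/m)^-$ gives $\hvol_X(v)\ge (R((S,\Delta);D/m)\,A(S))^n H^{n-1}$, and then, choosing for each $m$ an orbifold smooth $D$ and invoking Lemma \ref{lem-Rapp}, namely $R((S,\Delta);D/m)\ge\frac{(m-1)R(S,\Delta)}{m-R(S,\Delta)}\to R(S,\Delta)$, we let $m\to\infty$ to conclude
\[
\hvol_X(v)\ \ge\ R(S,\Delta)^n A(S)^n(-S|_S)^{n-1}=R(S,\Delta)^n\,\hvol(\ord_S).
\]

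The main obstacle I expect is purely in the input geometry rather than in the estimate itself: verifying that the logarithmic-orbifold construction of Proposition \ref{propcKEorb} genuinely applies with both the branch divisor $\Delta$ and the extra boundary $E_\gamma$ present (so that the resulting cone pair is klt and log-Ding-semistable), together with the standard point that an orbifold smooth $D\in\left|-mK^{\orb}_{(S,\Delta)}\right|$ exists for $m\gg1$ (orbifold Bertini) and that $\frac{1-\gamma}{m}D$ keeps $(S,\Delta+E_\gamma)$ klt for $m$ large. Once these are in place the two limits are routine and the chain of inequalities closes exactly, with equality tracked along $\ord_S$.
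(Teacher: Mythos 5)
Your proof is correct and follows the same route as the paper's: produce a conical K\"ahler--Einstein metric on $(S,\Delta+\frac{1-\gamma}{m}D)$ for $\gamma<R((S,\Delta);D/m)$, pass to the projective cone, invoke log-Ding-semistability and Proposition \ref{proplb} to get $\hvol_{(X,\frac{1-\gamma}{m}\cD)}(v)\ge(\gamma A(S))^nH^{n-1}$, and then take the two limits via Lemma \ref{lem-Rapp}. The one genuine difference is how you pass from the log normalized volume back to $\hvol_X(v)$. The paper chooses $D$ ``sufficiently general so that $D$ is not contained in the center of $v$'' in order to force $v(\cD)=0$ and hence $\hvol_{(X,\frac{1-\gamma}{m}\cD)}(v)=\hvol_X(v)$; you instead observe that since $\cE_\gamma$ is effective and $v$ is nonnegative on $R$, \eqref{pldvsld} gives $A_{(X,\cE_\gamma)}(v)=A_X(v)-v(\cE_\gamma)\le A_X(v)$, so the lower bound transfers to $\hvol_X(v)$ with no genericity hypothesis at all. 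This is a real improvement, not just a stylistic one: the divisor $\cD$ is the cone over $D$ and therefore always passes through the vertex $o$, which \emph{is} the center of $v$, so one cannot literally arrange $v(\cD)=0$ by moving $D$; your monotonicity argument sidesteps this issue entirely and yields the inequality in the needed direction. The remaining caveats you flag (applicability of the logarithmic orbifold cone construction with both $\Delta$ and $E_\gamma$ present, existence of orbifold smooth $D$ for $m\gg1$, kltness of $(S,\Delta+E_\gamma)$) are exactly the inputs the paper also takes for granted via Theorem \ref{thmorb} and the unnumbered proposition after Proposition \ref{propcKEorb}, so nothing is missing relative to the paper's own level of detail.
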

\begin{proof}
For any $\gamma<R(S, \Delta)$, when $m$ is sufficiently large, there exists a conical K\"{a}hler-Einstein metric $\omega\in 2\pi c_1(-(K_X+\Delta))$ on 
$(S, \Delta+(1-\gamma)\frac{D}{m})$ with Ricci curvature $\gamma$:
\[
Ric(\omega)=\gamma \omega+\{\Delta\}+\frac{1-\gamma}{m}\{D\}.
\]
Notice that the following identities hold:
\[
-(K_S+\Delta)=-A(S) S|_S, \quad \hvol(\ord_S)=A(S)^n (-S|_S)^{n-1}.
\]
So we have
\[
-(K_S+\Delta+\frac{1-\gamma}{m}D)=-\gamma (K_S+\Delta)=-\gamma A(S)S.
\]
Denoting by $\mathcal{D}$ the corresponding divisor associated to $D$ on $X$. So by Theorem \ref{thmorb} we have:
\[
\hvol_{(X, \frac{1-\gamma}{m}\cD)}(v)\ge \gamma^n A(S)^n (-S|_S)^{n-1}=\gamma^n  \hvol(\ord_S).
\]

This holds for any $\gamma< R((S, \Delta); D/m)$. Moreover, we can choose $D$ sufficiently general such that $D$ is not contained in the center of $v$ so that $\hvol_{(X, \frac{1-\gamma}{m}\cD)}(v)=\hvol_{X}(v)$.  By letting $\gamma\rightarrow R((S, \Delta); D/m)$, we get
\[
\hvol_X(v)\ge R((S, \Delta); D/m)^n A(S)^n (-S|_S)^{n-1}.
\]
By Lemma \ref{lem-Rapp}, by choosing $m$ sufficiently large (and $D$ sufficiently general), we get:
\[
\hvol_X(v)\ge R(S, \Delta)^n A(S)^n (-S|_S)^{n-1}=R(S, \Delta)^n \hvol(\ord_S).
\]

\end{proof}

Finally we will deal with the irregular case. Recall that the following well known lemma.
\begin{lem}[see \cite{BGM06}]
With the above notations, the following conditions are equivalent:
\begin{enumerate}
\item
$(X, \Omega)$ is a Ricci-flat K\"{a}hler cone metric;
\item
 $(M, g_M)$ is an Sasaki-Einstein metric with Einstein constant equal to $2n-2$;
\item The transverse metric satisfies the identity $Ric(g^T_M)=2n g^T_M$.
\end{enumerate}  
Moreover if the Reeb vector field $\xi$ is quasi-regular, then the above condition is also equivalent to the condition that $((S, \Delta), \omega_h)$ is a K\"{a}hler-Einstein Fano orbifold satisfying the identity $Ric(\omega_h)=n\lambda \omega_h$.
\end{lem}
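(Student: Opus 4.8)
The plan is to establish the chain $(1)\Leftrightarrow(2)\Leftrightarrow(3)$ for an arbitrary Sasaki structure, and then add $(3)\Leftrightarrow(4)$ under the quasi-regular hypothesis. All three equivalences are purely metric identities; the work is in assembling the standard curvature formulas for cones, for Sasaki structures, and for the descent to the leaf space, while tracking the normalization constants.

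First, for $(1)\Leftrightarrow(2)$ I would use the curvature formula for a Riemannian cone. Writing the cone metric as $g_\Omega=dr^2+r^2 g_M$, a warped-product (O'Neill) computation over the $(2n-1)$-dimensional link gives $Ric_{g_\Omega}(r\partial_r,\cdot)=0$ in the radial direction and, for $X,Y$ tangent to $M$,
\[
Ric_{g_\Omega}(X,Y)=Ric_{g_M}(X,Y)-(2n-2)g_M(X,Y).
\]
Hence $(X,\Omega)$ is Ricci-flat precisely when $Ric_{g_M}=(2n-2)g_M$, i.e. when $(M,g_M)$ is Sasaki-Einstein with Einstein constant $2n-2$.

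Next, for $(2)\Leftrightarrow(3)$ I would invoke the transverse Ricci identity of a Sasakian manifold. Splitting $TM=\bR\xi\oplus D$ with $D=\ker\eta$, the Sasaki structure equations (in particular $\nabla_X\xi=-\Phi X$ and the fact that every $\xi$-sectional curvature equals $1$) force, for any Sasaki metric, the automatic identities $Ric_{g_M}(\xi,\xi)=2n-2$ and $Ric_{g_M}(\xi,X)=0$ for $X\in D$, while on the contact distribution one has $Ric_{g_M}(X,Y)=Ric^T(X,Y)-2\,g^T(X,Y)$. Consequently the full Einstein equation $Ric_{g_M}=(2n-2)g_M$ holds if and only if its restriction to $D$ does, and the latter reads $Ric(g^T_M)=2n\,g^T_M$, which is exactly $(3)$.

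Finally, in the quasi-regular case the Reeb foliation is the orbifold fibration $\pi_S\colon X\setminus\{o\}\to (S,\Delta)$, and the transverse Kähler structure descends to an orbifold Kähler structure on $(S,\Delta)$. Using the identification $\omega^T=\lambda\,\pi_S^*\omega_h$ recorded around \eqref{eq-Omega} together with the scale-invariance of the Ricci form, the transverse equation $Ric(g^T_M)=2n\,g^T_M$ pushes down to $Ric(\omega_h)=n\lambda\,\omega_h$; the constant $n\lambda=A(S)$ is the one forced cohomologically by $[Ric(\omega_h)]=2\pi c_1(-(K_S+\Delta))=A(S)[\omega_h]$ (Lemma \ref{lem-Tclass}), so $(S,\Delta,\omega_h)$ is a Kähler-Einstein Fano orbifold. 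I expect the \emph{main obstacle} to be neither conceptual step but the bookkeeping of normalization constants: one must carry the Sasaki transverse Ricci identity with the correct factor of $2$, and then propagate the rescaling factor $\lambda=A(S)/n$ (relating $\omega^T$, $\omega_h$, and $d\eta$) so that the Einstein constants $2n-2$, $2n$, and $n\lambda$ match consistently across the three settings. Once the sign and scale conventions are fixed, each implication is a direct substitution.
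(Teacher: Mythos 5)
Your proposal is correct and follows essentially the same route as the paper: $(1)\Leftrightarrow(2)$ via the cone (Gauss--Codazzi) Ricci formula, $(2)\Leftrightarrow(3)$ via the standard Sasaki identities for $Ric(g_M)(\cdot,\xi)$ and $Ric(g_M)|_{\cH\times\cH}=Ric(g^T_M)-2g^T_M$, and the quasi-regular statement by descending the transverse K\"ahler--Einstein equation to the orbifold quotient $(S,\Delta)$. If anything you are more careful with the constants: your $Ric_{g_M}(\xi,\xi)=2n-2$ is the value consistent with the Einstein constant $2n-2$ (the paper's formula (a) reads $2n\,\eta(X)$, apparently a typo), and you explicitly track the rescaling $\omega^T=\lambda\,\pi_S^*\omega_h$ and pin the constant $n\lambda=A(S)$ cohomologically, where the paper's one-line ``$\omega_h=\omega^T_M$'' elides the factor $\lambda$ (the paper's subsequent Remark supplies the same normalization via the computation $Ric(\Omega)=\pi^*(Ric(\omega_h)-n\lambda\,\omega_h)$).
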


\begin{proof}
Equivalence of 1 and 2 follows from the Gauss-Codazzi equation for Ricci curvature. 
The equivalence of 2 and 3 follows from the formula for the Ricci curvatures:
\begin{enumerate}
\item[(a)]
$Ric(g_M)(X, \xi)=2n \eta(X)$ for any vector field $X$;
\item[(b)]
$Ric(g_M)(X, Y)=Ric(g^T_M)(X, Y)- 2g_M(X, Y)$ for any pair of sections $X, Y$ of $\cH$.
\end{enumerate}
In the quasi-regular case, $\omega_h=\omega^T_M$ and hence the last statement.
\end{proof}
\begin{rem}
In the quasi-regular case, equivalence of 1 and 3 also follows from the following calculation:
\begin{eqnarray*}
Ric(\Omega)&=&-\sddb\log \Omega^n=-\sddb\log (h^{n\lambda} \omega_h^n)\\
&=&\pi^*(Ric(\omega_h)-n\lambda \omega_h).
\end{eqnarray*}
\end{rem}

\begin{lem}\label{lem-MetricApp}
If $(M, g_M)$ is a Sasaki metric with an irregular Reeb vector field $\xi$, then there exists a sequence of quasi-regular Sasaki metrics $\{g_k\}_k\in \mathbb{N}$ on $M$ with Reeb vector fields $\xi_k$ such that $\xi_k\rightarrow \xi$ and $g_{M, k}\rightarrow g_M$ in the $C^\infty$ topology as $k\rightarrow+\infty$. Moreover we can assume $A(\xi_k)=A(\xi)$ for any $k$.
\end{lem}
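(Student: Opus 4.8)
The plan is to exhibit every Sasaki structure we need as a member of one smooth family, obtained by moving the Reeb field inside the Reeb cone while holding the complex structure of the cone $X=C(M)$ fixed, and then to approximate the irregular $\xi$ by rational Reeb fields, which are automatically quasi-regular.

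First I would construct the family. Keep fixed the complex structure $J$ on $X$ together with the holomorphic $T_\bC$-action, and take the given K\"{a}hler cone metric $\Omega=\sddb r^2$ (so $r=r_\xi$ and $\xi=J(r\partial_r)$) as a background. Following Martelli-Sparks-Yau \cite{MSY08} (and Collins-Sz\'{e}kelyhidi \cite{CS12, CS15}), for each $\xi'$ in a neighbourhood $U\subset\ft^+_\bR$ of $\xi$ one produces a $T$-invariant radius function $r_{\xi'}$, determined up to scale by the requirements $\xi'=J(r_{\xi'}\partial_{r_{\xi'}})$ and positivity of $\tfrac12\sddb r_{\xi'}^2$. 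Concretely $r_{\xi'}$ is obtained from $r$ by the explicit MSY deformation, which is manifestly smooth in $\xi'$; since positive-definiteness is an open condition and holds at $\xi'=\xi$, the family is defined and remains a genuine K\"{a}hler cone metric throughout $U$. Transporting the link $M_{\xi'}=\{r_{\xi'}=1\}$ to the fixed manifold $M$ by the radial diffeomorphism, each $g(\xi')$ becomes a Sasaki metric on $M$ with Reeb field $\xi'$, and $\xi'\mapsto g(\xi')$ is smooth with $g(\xi)=g_M$.

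Next I would run the approximation. The quasi-regular Reeb fields are exactly the rational points of the Reeb cone (those $\xi'\in\ft^+_\bR$ that are rational with respect to the cocharacter lattice of $T_\bC$), since such $\xi'$ generate a circle, and these are dense in $\ft^+_\bR$. I pick rational $\xi_k'\to\xi$ inside $U$, so each $g(\xi_k')$ is quasi-regular. Since $A$ is linear on $\ft_\bR$ and $A(\xi)>0$, I rescale to $\xi_k:=(A(\xi)/A(\xi_k'))\,\xi_k'$; scaling by a positive constant keeps $\xi_k$ in the (scale-invariant) Reeb cone and only reparametrizes the closed Reeb orbits, so $\xi_k$ is again quasi-regular, $A(\xi_k)=A(\xi)$ for all $k$, and $A(\xi_k')\to A(\xi)$ forces $\xi_k\to\xi$. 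Putting $g_k:=g(\xi_k)$, smoothness of the family then gives $\xi_k\to\xi$ and $g_k\to g(\xi)=g_M$ in the $C^\infty$ topology, which is the assertion.

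The hard part is the construction in the second paragraph: producing a compatible metric that depends smoothly, in $C^\infty$, on the Reeb field with the complex structure of $X$ held fixed, in contrast to the symplectic picture where the complex structure is allowed to vary and the dependence is trivial. This is precisely the deformation analysis of \cite{MSY08, CS12}, and it is the only nonelementary input; the remaining points---density of rational rays, preservation of quasi-regularity under positive scaling, and the normalization $A(\xi_k)=A(\xi)$---are immediate.
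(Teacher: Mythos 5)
Your proposal is correct and follows essentially the same route as the paper: the paper performs the identical type-I deformation directly on the link, writing $\tilde{\eta}=\eta/(1+\eta(\rho))$, $\tilde{\Phi}=\Phi-\Phi\tilde{\xi}\otimes\tilde{\eta}$ and $\tilde{g}_M=d\tilde{\eta}\circ(\tilde{\Phi}\otimes \mathbb{I})\oplus\tilde{\eta}\otimes\tilde{\eta}$ following Boyer--Galicki et al., whereas you phrase the same deformation (fixed transverse holomorphic/CR structure, Reeb field moving in the torus) on the cone via the MSY radius function; both then invoke smooth dependence on the Reeb field, density of rational (quasi-regular) Reeb fields, and a normalization of $A$. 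The only cosmetic difference is that the paper keeps $\tilde{\xi}$ in the normalized slice $\{A=n\}$ rather than rescaling the rational approximants afterwards as you do.
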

\begin{proof}
We consider the deformations of Sasaki structures that preserve the CR structure and change the Reeb vector field (see \cite{BGM06, BGS08}):
\[
\tilde{\eta}=f\eta, \quad \tilde{\xi}=\xi+\rho, \quad \tilde{\Phi}=\Phi-\Phi \tilde{\xi}\otimes\tilde{\eta}
\]
such that:
\[
\tilde{\eta}(\tilde{\xi})=1, \quad \tilde{\xi}\rfloor d\tilde{\eta}=0, \quad \mathcal{L}_{\tilde{\xi}}\tilde{\Phi}=0.
\]
Then $f=\frac{1}{1+\eta(\rho)}$.  The corresponding Riemannian metric $\tilde{g}_M$ is given by
\begin{eqnarray*}
\tilde{g}_M=d\tilde{\eta}\circ (\tilde{\Phi}\otimes \mathbb{I})\oplus \tilde{\eta}\otimes\tilde{\eta}.
\end{eqnarray*}
It's easy to see that $\tilde{g}_M$ depends on $\tilde{\xi}$ smoothly.
When $\rho=\tilde{\xi}-\xi$ is sufficiently small, $\tilde{g}_M-g_M$ is sufficiently small in the $C^\infty$ topology. We can also let $\tilde{\xi}$ changes in the set $\widehat{\mathcal{C}}$ to ensure
$A(\xi_k)=A(\xi)=n$.

\end{proof}

Now we can complete the proof of Theorem \ref{thm-irSE}. 
\begin{proof}[Proof of the Theorem \ref{thm-irSE}]
As mentioned above, the transverse Ricci curvature and the Ricci curvature are related by:
\begin{equation}\label{eq-2Ric}
Ric(g_M)|_{\cH\times\cH}=Ric(g^T_M)-2 g^T_M,
\end{equation}
where $\cH$ denotes the horizontal distribution of the Sasakian structure.
Let $g_M$ be the Sasaki-Einstein metric associated to the Ricci-flat K\"{a}hler cone metric. By Lemma \ref{lem-MetricApp} we can find a sequence of quasi-regular Sasaki metrics $g_{M,k}$ with the corresponding quotients $((S_k, \Delta_k); \omega_k)$ satisfying:
$Ric(\omega_k)\ge (1-\epsilon)\omega_k $ where $\omega_k\in 2\pi c_1(-(K_{S_k}+\Delta_k))$. Indeed, we can choose $k$ large enough such that $g_{M, k}$ from Lemma \ref{lem-MetricApp} satisfies:
$Ric(g_{M, k})\ge (2(n-1)-2 n \epsilon)g_{M, k}$ which implies by \eqref{eq-2Ric} $Ric(g^T_{M, k})\ge (2n-2n \epsilon)g^T_{M, k}$ or equivalently 
$Ric(\omega^T_k)\ge (n-n \epsilon) \omega^T_{M,k}$ where $\omega^T_{M,k}$ is the transversal K\"{a}hler form associated to the transverse K\"{a}hler metric $g^T_{M,k}$. Moreover, we can assume $A(\xi_k)=n=A(\xi)$. Then by Lemma \ref{lem-Tclass}, we have $[\omega^T_k]\in 2\pi c_1(-(K_S+\Delta))/n$. We just need to let
$\omega_k=n \omega^T_{M,k}$.

By construction, there exists $c_k>0$ such that $v_{\xi_k}=c_k\cdot \ord_{S_k}$. Then as $k\rightarrow+\infty$, we have $c_k\cdot \ord_{S_k}\rightarrow v_\xi$ , $R(S_k, \Delta_k)\rightarrow 1$ and $\hvol_X(\ord_{S_k})=\hvol_X(v_{\xi_k}) \rightarrow \hvol_X(v_\xi)$. By Proposition \ref{prop-hvolvR}, we have
\begin{eqnarray*}
\hvol_X(v)&\ge&  R(S_k, \Delta_k)^n\hvol(S_k).
\end{eqnarray*}
Letting $k\rightarrow+\infty$, we get the wanted inequality: $\hvol_X(v)\ge \hvol(v_\xi)$.
\end{proof}




\section{A question}
We end this paper with the following question whose answer would lead to a purely algebraic proof of the result in this paper.

\noindent
{\bf Question}: With the notations used in this paper, give an algebraic proof (i.e. without using conical K\"{a}hler-Einstein metrics) of the following result (and its logarithmic/orbifold version): a $\bQ$-Fano variety $V$ is K-semistable (resp. K-polystable) if and only if $(\ocC, (1-\beta)V_\infty)$ is log-K-semistable (resp. log-K-polystable). 

Notice that the correct cone angle $\beta=\frac{r}{n}$ can be detected by log Futaki invariant defined in \cite{Don12}  as in \cite[Section 3.3]{LS12} (see also \cite{Li11}).

{\bf Postscript note:} 
After the completion of the first version of this paper, some consequences/applications of our main results has appeared in \cite{Liu16, LX16, HS16}. Moreover the above question has been answered affirmatively in \cite{LX16}.

\section{Acknowledgment}
We would like to thank Chenyang Xu and Kento Fujita for helpful comments.
The first author is partially supported by NSF DMS-1405936. 
The first author would like to thank Laszlo Lempert and Sai-Kee 
Yeung for their interest in this work. Part of this paper is written while the 
first author visits MSRI at Berkeley, and he would like to thank the institute for
its hospitality. The second author is partially
supported by NSF DMS-0968337. The second author would like to thank
his advisor J\'anos Koll\'ar for his constant support, encouragement and 
numerous inspiring conversations. The second author also wishes to thank
Charles Stibitz, Yury Ustinovskiy, Xiaowei Wang and Ziquan Zhuang for many useful 
discussions.

\noindent
Department of Mathematics, Purdue University, West Lafayette, IN 47907-2067

\noindent
{\it E-mail address:} li2285@purdue.edu

\vskip 2mm

\noindent
Department of Mathematics, Princeton University, Princeton, NJ, 08544-1000.

\noindent {\it E-mail address:} yuchenl@math.princeton.edu

\end{document}